\documentclass[10pt,reqno]{amsart}
\usepackage[a4paper,textwidth=140mm,textheight=230mm,
 centering,headheight=12pt,headsep=8mm,footskip=9mm]{geometry}
\usepackage[T1]{fontenc}
\usepackage{lmodern,amsmath,amssymb,amsthm,mathtools,microtype}
\usepackage[dvipsnames]{xcolor}
\usepackage{tikz-cd,needspace}
\newcommand{\x}{\mathcal{X}}
\newcommand{\y}{\mathcal{Y}}
\newcommand{\m}{\mathcal{M}}
\newcommand{\f}{\mathcal{F}}

\definecolor{refblue}{RGB}{0,55,150}
\definecolor{citered}{RGB}{175,25,35}
\usepackage[colorlinks=true,linkcolor=refblue,citecolor=citered,
 urlcolor=refblue,pagebackref]{hyperref}
\renewcommand*{\backref}[1]{}
\renewcommand*{\backrefalt}[4]{%
 \ifcase #1\relax\or\space(Cited on p.~#2.)%
 \else\space(Cited on pp.~#2.)\fi}
\makeatletter
\renewcommand{\section}{\@startsection{section}{1}{\z@}%
 {2.5ex plus 1ex minus .2ex}{1.3ex plus .2ex}{\normalfont\bfseries}}
\renewcommand{\subsection}{\@startsection{subsection}{2}{\z@}%
 {2ex plus .8ex minus .2ex}{.8ex plus .2ex}{\normalfont\itshape}}
\makeatother
\numberwithin{equation}{section}
\newtheorem{theorem}{Theorem}[section]
\newtheorem{lemma}[theorem]{Lemma}
\newtheorem{proposition}[theorem]{Proposition}

\newtheorem{introtheorem}{Theorem}

\theoremstyle{definition}
\newtheorem{definition}[theorem]{Definition}
\newtheorem*{introdefinition}{Definition}
\newtheorem{notation}[theorem]{Notation}
\newtheorem{fact}[theorem]{Fact}
\newtheorem{remark}[theorem]{Remark}
\newtheorem{example}[theorem]{Example}

\newcommand{\rad}{\operatorname{rad}}
\newcommand{\soc}{\operatorname{soc}}
\newcommand{\topm}{\operatorname{top}}
\newcommand{\End}{\operatorname{End}}
\newcommand{\Hom}{\operatorname{Hom}}

\newcommand{\im}{\operatorname{im}}
\newcommand{\coker}{\operatorname{coker}}

\newcommand{\modu}{\operatorname{mod}}
\newcommand{\op}{\mathrm{op}}
\newcommand{\len}{\ell}
\newcommand{\CC}{\mathcal C}

\begin{document}

\title[Composition factor algebras and finite representation type]
{Composition factor algebras and finite representation type}
\keywords{Artin algebra, composition factor algebra, finite representation type,
	comma category, factorization category}
\hypersetup{pdftitle={Composition factor algebras and finite representation type},pdfauthor={}}

\author{Bo Le and Bin Zhu}

\begin{abstract}
An Artin algebra is called a composition factor algebra if its
indecomposable finitely generated modules are determined, up to isomorphism,
by their composition factors. We prove that every composition factor algebra
is representation-finite. This settles a conjecture of Auslander,
Reiten and Smal\o. We also give criteria for comma categories and
factorization categories to be of finite representation type.
\end{abstract}
\maketitle
\tableofcontents

\section{Introduction}
The composition factors of a finite-length module, counted with
multiplicities, are among its simplest invariants. In general, nonisomorphic indecomposable
modules may have the same composition factors. Let $\Lambda$ be an Artin algebra. For a finitely generated
$\Lambda$-module $M$ and a simple $\Lambda$-module $S$, we write
$[M:S]$ for the multiplicity of $S$ in a composition series of $M$. We call $\Lambda$ a \emph{composition factor algebra}
(see Definition~\ref{def:composition-factor-algebra}) if
$$
 [M:S]=[N:S]\text{ for every simple }S
 \quad\Longrightarrow\quad M\cong N
$$
for any two indecomposable finitely generated $\Lambda$-modules
$M,N$.

The works of Happel \cite{Happel} and Auslander and Reiten
\cite{AR} give conditions under which composition factors
determine indecomposable modules; see also the historical notes
in \cite[Chapter~IX, p.~333]{ARS}. Auslander, Reiten and Smal\o\ conjectured that every composition factor
algebra is representation-finite \cite[Conjecture~(4), p.~409]{ARS}.
Here representation-finite means that there are only finitely many
isomorphism classes of indecomposable finitely generated modules.
Blasco proved the conjecture for commutative, hereditary or
radical-square-zero Artin algebras \cite{Blasco}. Our main result
establishes it for arbitrary Artin algebras.

\begin{theorem}\label{thm:main}
Every composition factor algebra is representation-finite.
\end{theorem}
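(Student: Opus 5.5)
The plan is to argue by contradiction. Suppose $\Lambda$ is a composition factor algebra that is representation-infinite. The aim is to produce two nonisomorphic indecomposable modules with the same composition factors. Two facts set this up. First, the composition factor property passes to quotient algebras. If $I$ is a two-sided ideal, then every indecomposable $\Lambda/I$-module is an indecomposable $\Lambda$-module, and the simple $\Lambda/I$-modules are among the simple $\Lambda$-modules with the same multiplicities. So $\Lambda/I$ is again a composition factor algebra. Second, by Brauer--Thrall~I for Artin algebras (Auslander), a representation-infinite $\Lambda$ has indecomposables of unbounded length. If $\Lambda/I$ is representation-infinite for some nonzero $I$, we may replace $\Lambda$ by it. Since $\Lambda$ has finite length as a bimodule, we may therefore assume $\Lambda$ is \emph{minimal representation-infinite}: every proper quotient is representation-finite.

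Next I would pass to a triangular (comma-category) description of a suitable piece of $\Lambda$. Choose an idempotent $e$, or a radical layer, so that $\modu\Lambda$, or a full subcategory closed under the relevant operations, is equivalent to a comma category $(F\downarrow \modu B)$ with $B$ representation-finite. For example, take $\Lambda/\rad^{2}$-type reductions or one-point extensions $\Lambda=\begin{pmatrix} A & M\\ 0 & k\end{pmatrix}$ with $A$ representation-finite by minimality. The indecomposable objects of such a comma category are pairs $(X, f\colon F(X)\to Y)$. Their composition factors are $[X]+[Y]$, so they depend only on the underlying objects and not on the map $f$.

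This is where the paper's criteria for comma and factorization categories to have finite representation type enter. I would invoke them in the following form: if the comma category is representation-infinite, then for some pair of fixed objects $(X,Y)$ there are at least two nonisomorphic indecomposable objects $(X,f)$ and $(X,f')$. Typically these come from a one-parameter family, or from two different orbits of the group $\operatorname{Aut}X\times\operatorname{Aut}Y$ acting on $\Hom(F X,Y)$, both consisting of indecomposables. These two objects give $\Lambda$-modules with identical composition factors that are not isomorphic, which contradicts the composition factor hypothesis.

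The main obstacle is this last step, when the base field, or the center of $\Lambda$, is finite. In that case a one-parameter family gives only finitely many modules for each dimension vector, so a naive pigeonhole on dimension vectors in $\mathbb N^{n}$ fails. What is needed is a structural statement: infinitely many indecomposables forces two \emph{distinct} indecomposable orbits in a \emph{single} $\Hom$-space. I would first try to derive this from the comma-category criterion via a Tits-form or Harada--Sai style count. Infinitely many indecomposables with bounded $(X,Y)$-supports must have some support pair carrying at least two indecomposable orbits. Failing that, I would fall back on Blasco's radical-square-zero and hereditary cases, applied to the separated quiver of the minimal representation-infinite algebra.
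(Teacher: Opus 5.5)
Your outline has the right shape: realize $\modu\Lambda$ as a category in which composition factors depend only on the components, then show that infinitely many indecomposables force two nonisomorphic ones with the same components. However, both load-bearing steps are missing.

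\textbf{The reduction.} Your step ``choose an idempotent $e$, or a radical layer, so that $\modu\Lambda$ is equivalent to a comma category'' is available only for triangular algebras. For a one-point extension $\begin{pmatrix}A&M\\0&k\end{pmatrix}$, adjunction does give $\modu\Lambda\simeq\m(1_{\modu k},\Hom_A(M,-))$. There $A$ is representation-finite by minimality, and Theorem~\ref{thm:map-finite} would finish the argument. But a nonsemisimple local algebra has no nontrivial triangular decomposition, and neither does any algebra with strongly connected quiver, such as $kQ_n/J^m$ in Example~\ref{ex:composition-factor-algebras}. A minimal counterexample could be of this kind.

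For $\Lambda\cong\begin{pmatrix}A&U\\V&B\end{pmatrix}$ with $U,V\ne0$, a module is a factorization $U\otimes_{B}e'M\to eM\to\Hom_B(V,e'M)$. This is a two-sided structure, not directly an object of a comma category. Even this description, with $D$-valued functors, requires first proving from the composition-factor hypothesis that $UV=0$, $J(A)U=0=VJ(A)$ and $U\otimes_{B}V=0$. This is Lemma~\ref{prop:structure}, proved by comparing the explicit modules $M_h,M_p,N_\lambda$ on $A\oplus Y$, which have equal composition factors.

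Several further steps depend on this. The relation $UV=0$ is what makes $A$ a quotient of $\Lambda$, hence local Nakayama (Proposition~\ref{prop:local-finite}). Choosing $e$ with $eS=S$ for a simple left ideal $S\subseteq J(\Lambda)$ makes $B$ a corner of the proper quotient $\Lambda/S\Lambda$, hence representation-finite. Finally, the factorization category must still be cut into two comma categories with semisimple targets (Lemmas~\ref{lem:normal-form} and~\ref{lem:factorization-comma}), using property~(P) to get $K(T)=0$ or $C(T)=0$. None of this is in your plan.

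\textbf{The finiteness criterion.} The step you flag as the main obstacle is the content of Theorem~\ref{thm:map-finite}, and your proposal only states its conclusion. Your pigeonhole on ``bounded supports'' has a premise that Brauer--Thrall~I denies. Moreover, over a finite field a fixed component pair carries only finitely many objects, so no argument confined to one pair can work.

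The paper argues globally. Uniqueness per component pair forces endomorphism rings of indecomposables to be division rings (Lemma~\ref{lem:transitive}). It also forces $q(X,Y)=\dim_{\Bbbk}\End(X)+\dim_{\Bbbk}\End(Y)-\dim_{\Bbbk}\Hom_D(G(X),F(Y))$ to be weakly positive (Proposition~\ref{prop:map-positivity}). Suppose $q(X,Y)\le0$, and consider the $\operatorname{GL}_h(\End Y)\times\operatorname{GL}_h(\End X)$-orbits on $h\times h$ matrices over $W=\Hom_D(G(X),F(Y))$ whose objects admit an endomorphism killed by an irreducible polynomial of degree $h$. There are too many of them:
\begin{itemize}
\item Over an infinite field, take $h=1$. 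Each orbit is the image of a quadratic map in fewer than $\dim_{\Bbbk}W$ variables, hence lies in a hypersurface, so there are infinitely many orbits.
\item Over $\mathbb F_{p^n}$, take $h$ prime. Cauchy--Frobenius counting with companion matrices gives at least $(p^{nh}-p^n)/h$ orbits.
\end{itemize}
Uniqueness together with Lemma~\ref{lem:irreducible-endomorphism} bounds that number polynomially in $h$, a contradiction. Simson's theorem then converts weak positivity into finite type (Lemma~\ref{lem:simson-criterion}).

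\textbf{The fallback.} Your fallback does not help. If $\Lambda$ is minimal representation-infinite and $J(\Lambda)^2\ne0$, then $\Lambda/J(\Lambda)^2$ is a proper quotient, hence representation-finite. So its separated quiver is a union of Dynkin diagrams, and Blasco's radical-square-zero case yields no contradiction.
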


The proof of Theorem~\ref{thm:main} uses the following finiteness
criteria for comma categories (cf.\ \cite[II, \S6]{MacLane})
and factorization categories (cf.\ \cite[\S1]{MV}).
Here and throughout, $R$ is a commutative Artin ring.

\begin{introdefinition}[see Definition~\ref{def:map-category}]
Let $D$ be a division Artin $R$-algebra,
and let $\x,\y$ be Hom-finite Krull--Schmidt
$R$-linear categories. Fix additive $R$-linear functors
$G:\x\to\modu D$ and $F:\y\to\modu D$.
The \emph{comma category} $\m(G,F)=(G\downarrow F)$
is defined as follows.
\begin{itemize}
\item Objects are triples $(X,Y,u)$ with $X\in\x$,
$Y\in\y$ and a homomorphism of $D$-modules
$u:G(X)\to F(Y)$.
\item A morphism $(X,Y,u)\to(X',Y',u')$ is a pair $(f,g)$
with $f:X\to X'$ in $\x$ and $g:Y\to Y'$ in
$\y$ satisfying $F(g)\circ u=u'\circ G(f)$.
\end{itemize}
Identities and composition are defined componentwise.
\end{introdefinition}

The following theorem gives a finiteness criterion for comma categories.

\begin{introtheorem}[Theorem~\ref{thm:map-finite}]\label{thm:intro-comma}
In the setting above,   assume that $\x$ and $\y$
are representation-finite. If any two indecomposable objects
of $\m(G,F)$ with isomorphic components are isomorphic,
then $\m(G,F)$ is representation-finite.
\end{introtheorem}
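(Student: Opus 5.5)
The plan is to reduce $\m(G,F)$ to a module-theoretic setting and then show that the hypothesis forces a uniform bound on the components of indecomposable objects. First, since $\x$ and $\y$ are representation-finite, I choose additive generators $X_0\in\x$ and $Y_0\in\y$. Every object of $\x$ is then a direct sum $\bigoplus X_i^{a_i}$ of the finitely many indecomposables $X_1,\dots,X_m$, and similarly $Y=\bigoplus Y_j^{b_j}$. An object of $\m(G,F)$ is thus determined by a pair of multiplicity vectors $(a,b)\in\mathbb N^m\times\mathbb N^n$ together with a matrix $u$ whose block entries lie in the finite-length $R$-modules $H_{ij}=\Hom_D(G(X_i),F(Y_j))$. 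Two such matrices give isomorphic objects exactly when they lie in the same orbit of $\operatorname{Aut}(X)\times\operatorname{Aut}(Y)$ acting by $(f,g)\cdot u=F(g)\,u\,G(f)^{-1}$. By the Krull--Schmidt property of $\x,\y$, the category $\m(G,F)$ is itself Hom-finite and Krull--Schmidt: endomorphism rings are subrings of $\End X\times\End Y$, hence of finite length. The hypothesis therefore says that for each $(a,b)$ there is at most one orbit of indecomposable objects. Hence it suffices to show that the set of $(a,b)$ carrying an indecomposable object is finite.

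Second, I introduce the quadratic form
\[
q(a,b)=\len_R\End(X)+\len_R\End(Y)-\len_R\Hom_D(G(X),F(Y)),
\]
which is a quadratic form in $(a,b)$ with integer coefficients given by the lengths of $\Hom(X_i,X_{i'})$, $\Hom(Y_j,Y_{j'})$ and $H_{ij}$. The idea is a Tits-form counting argument.

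\emph{Step (a).} Show that if $(a,b)$ supports an indecomposable object $(X,Y,u)$, then $q(a,b)\ge 1$.

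\emph{Step (b).} Show that the set of such $(a,b)$ is nevertheless bounded, using that $q$ is ``positive'' on them together with closure properties under extensions.

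To handle Step (a) over a possibly finite residue field, I count rather than use dimensions of varieties. For each residue field $k$ of $R$, I compare the cardinality of the set of $u$ that give indecomposable objects with the sizes of the orbits. The orbit of an indecomposable $u$ has cardinality $|\operatorname{Aut}X|\,|\operatorname{Aut}Y|/|\operatorname{Stab}(u)|$. The stabilizer contains the units of a local ring containing the scalars, which costs a factor of roughly $|k|$.

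For decomposable $u$, I bound the number of $u$ inductively over proper decompositions $(a,b)=(a',b')+(a'',b'')$, using the uniqueness hypothesis on the summands. I then show that if $q(a,b)\le 0$, the indecomposable $u$ would have to fill a proportion of $\Hom_D(G X,F Y)$ that is incompatible with a single orbit. A cleaner alternative I would try first is to avoid counting altogether. In that version I take an indecomposable $(X,Y,u)$ together with a nonsplit self-extension whose class is not killed by the $\operatorname{Aut}$ action. This produces two non-isomorphic objects with components $(X^2,Y^2)$, one of them indecomposable, whenever $q(a,b)\le 0$.

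For Step (b), I would run an Auslander/Harada--Sai type argument inside $\m(G,F)$. If indecomposables exist with unbounded $(a,b)$, then, because $\m(G,F)$ is Hom-finite, Krull--Schmidt and has only finitely many indecomposables of each bounded size, Brauer--Thrall~I style reasoning yields an infinite family. Taking differences of suitable members of this family then produces a vector in the cone of sincere indecomposable components at which $q\le 0$, contradicting Step (a).

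I expect the main obstacle to be Step (a): turning ``at most one indecomposable orbit per component pair'' into a numerical inequality without an algebraically closed field. This is where the finite-field counting, or the explicit construction of a second non-isomorphic indecomposable with the same components, has to be made to work carefully.
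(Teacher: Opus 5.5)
Your opening reduction is sound: each pair $(a,b)$ carries at most one indecomposable, so it suffices to bound the pairs that occur. But neither of the two steps that carry the weight is supplied, and Step~(b) cannot work as sketched. Step~(a) controls $q$ only at dimension vectors of indecomposables. A difference of two such vectors is in general not the dimension vector of any indecomposable, so producing some vector with $q\le 0$ contradicts nothing. Harada--Sai or Brauer--Thrall~I reasoning only yields indecomposables of unbounded size, which you already have, and says nothing about $q$.

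The implication ``positive form $\Rightarrow$ finite type'' is a deep theorem, not a by-product of counting. The paper proceeds in four stages:
\begin{itemize}
\item It reduces to faithful $G,F$, by discarding indecomposables killed by $G$ or $F$ and passing to $\x/\ker G$, $\y/\ker F$ via Lemma~\ref{lem:faithful-image}.
\item It shows with Lemma~\ref{lem:transitive} that indecomposables then have division endomorphism rings.
\item It proves that $q(X,Y)>0$ for \emph{all} $(X,Y)\neq(0,0)$ (Proposition~\ref{prop:map-positivity}).
\item It identifies $\m(G,F)$ with $\operatorname{prin}(\Gamma)$ for a bipartite piecewise peak algebra $\Gamma$ whose Tits form is $q$, so that Simson's criterion applies (Lemma~\ref{lem:simson-criterion}).
\end{itemize}
Your Step~(b) would have to replace Simson's theorem, and nothing in your sketch does.

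Step~(a) is not established either. Write $c=\len_R\End(X)+\len_R\End(Y)$ and $w=\len_R W$ with $W=\Hom_D(G(X),F(Y))$.
\begin{itemize}
\item Over an infinite residue field a Tits argument does work, and for all $(X,Y)$ rather than only those carrying an indecomposable. The hypothesis leaves only finitely many isomorphism classes with components $(X,Y)$. For $c\le w$, each orbit on $W$ lies in a hypersurface. This is the paper's Case~(1), after the reduction to faithful functors.
\item Over a finite field, counting at a fixed $(a,b)$ has no parameter to let grow, since $|k|$ is fixed by $R$. The natural bound $|k|^{c}/(|k|-1)$ on the indecomposable orbit does not beat $|W|=|k|^{w}$ when $w=c$; for $k=\mathbb F_2$ it equals it. So everything rests on an exact count of the decomposable $u$, which your inductive sketch does not give.
\end{itemize}

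The missing idea is to pass to $(X^h,Y^h)$. By the hypothesis and Lemma~\ref{lem:irreducible-endomorphism}, the objects $(X^h,Y^h,u)$ admitting an endomorphism annihilated by an irreducible polynomial of degree $h$ fall into polynomially many (in $h$) isomorphism classes. On the other hand, when $w\ge c$, Burnside's lemma applied to the elements $(M_\phi,M_\phi)$ built from companion matrices produces at least $|\mathcal I_h|\approx|\Bbbk|^h/h$ of them. This is a contradiction as $h$ grows.

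Your ``cleaner alternative'' fails outright. One indecomposable and one decomposable object with the same components do not contradict a hypothesis that concerns only indecomposables.
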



Let $\Lambda$ be a basic composition factor algebra.
Choose a primitive idempotent $e$ and write $e'=1-e$,
$A=e\Lambda e$, $B=e'\Lambda e'$, $U=e\Lambda e'$ and
$V=e'\Lambda e$.
For $M\in\modu\Lambda$, we write $X=eM$ and $N=e'M$.
The $\Lambda$-action on $M$ induces homomorphisms of $A$-modules
$$
U\otimes_B N\longrightarrow X\longrightarrow\Hom_B(V,N),
\qquad N\in\modu B,\quad X\in\modu A.
$$
By Lemma~\ref{prop:structure}, $J(A)U=0=VJ(A)$.
Hence both $U\otimes_B N$ and $\Hom_B(V,N)$ are annihilated
by $J(A)$ and are naturally modules over the division ring
$D=A/J(A)$. The functors $U\otimes_B-$ and $\Hom_B(V,-)$, together with
the natural transformation induced by multiplication in $\Lambda$,
define a factorization category equivalent to $\modu\Lambda$
(see Proposition~\ref{prop:module-diagrams}).
We give the general definition below.

\begin{introdefinition}[see Definition~\ref{def:factorization}]
Let $A$ be a local Artin $R$-algebra, write $D=A/J(A)$, and
let $\CC$ be a Hom-finite Krull--Schmidt $R$-linear category.
Fix additive $R$-linear functors $G,F:\CC\to\modu D$ and a
natural transformation $\mu:G\to F$. We view $D$-modules as
$A$-modules through the quotient $A\to D$.
The \emph{factorization category} $\f_A(\mu;G,F)$
is defined as follows.
\begin{itemize}
\item Objects are tuples $(N,X,\beta,\alpha)$ with $N\in\CC$,
$X\in\modu A$ and homomorphisms of $A$-modules
$$
G(N)\xrightarrow{\beta}X\xrightarrow{\alpha}F(N),
\qquad \alpha\circ\beta=\mu_N.
$$
\item A morphism $(N,X,\beta,\alpha)\to(N',X',\beta',\alpha')$
is a pair $(f,g)$ with $f:N\to N'$ in $\CC$ and
$g:X\to X'$ in $\modu A$ satisfying
$g\circ\beta=\beta'\circ G(f)$ and
$F(f)\circ\alpha=\alpha'\circ g$.
\end{itemize}
Identities and composition are defined componentwise.
\end{introdefinition}

For factorization categories, the following analogue of
Theorem~\ref{thm:intro-comma} holds.

\begin{introtheorem}[Theorem~\ref{prop:factorization}]\label{thm:intro-factorization}
In the setting above, assume that $\CC$ and $\modu A$ are
representation-finite. If any two indecomposable objects of
$\f_A(\mu;G,F)$ with isomorphic $\CC$-components and
isomorphic $A$-components are isomorphic, then
$\f_A(\mu;G,F)$ is representation-finite.
\end{introtheorem}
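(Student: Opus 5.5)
The plan is to reduce Theorem~\ref{thm:intro-factorization} to Theorem~\ref{thm:intro-comma}, or else to run the same counting argument directly. Since $\CC$ and $\modu A$ are representation-finite, they have finitely many indecomposables $N_1,\dots,N_p$ and $X_1,\dots,X_q$. An indecomposable object $(N,X,\beta,\alpha)$ of $\f_A(\mu;G,F)$ has $N\cong\bigoplus N_i^{a_i}$ and $X\cong\bigoplus X_j^{b_j}$. By hypothesis, its isomorphism class is determined by the vector $(a,b)\in\mathbb N^{p+q}$. So it suffices to bound $(a,b)$ over all indecomposable objects. Only finitely many vectors then occur, and the category is representation-finite.

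To bound the vectors, I would use the standard ``multiple/ degeneration'' trick that presumably also drives Theorem~\ref{thm:intro-comma}. Suppose $Z=(N,X,\beta,\alpha)$ is indecomposable. Consider the space $\mathcal S(N,X)$ of all pairs $(\beta,\alpha)$ with $\alpha\beta=\mu_N$. This is an affine variety (more precisely, a subset of a finite-length $R$-module) acted on by $\mathrm{Aut}(N)\times\mathrm{Aut}(X)$. The hypothesis says that all indecomposable objects with components $(N,X)$ form a single orbit. The key step is to show that this forces $(N,X)$ to be ``small''. I would compare $Z^{\oplus 2}$ with an object built on $(N^2,X^2)$ by perturbing the structure maps. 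For example, take $\beta\oplus\beta$ and $\alpha\oplus\alpha$ and add an off-diagonal term $\varepsilon$ with $\alpha\varepsilon=0$ and $\varepsilon\beta$-compatibility. This produces an indecomposable (or at least non-split) object whose components are $N^2,X^2$, contradicting uniqueness once the relevant Hom-spaces are nonzero. Concretely, whenever the endomorphism algebra of $Z$ has a non-trivial ``extension'' direction, the components of $Z$ repeat in a non-isomorphic indecomposable. That is impossible, so each multiplicity $a_i,b_j$ should be bounded by something like the $D$-dimensions of $G(N_i),F(N_i)$ and the lengths of the $X_j$.

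A cleaner route I would try first is an actual reduction to the comma category. Forgetting $X$ and remembering only $(N,X)$ together with the composite data, I would view $\f_A(\mu;G,F)$ as built from a comma-type category over $\CC\times\modu A$. For instance, use $\x=\CC$ and $\y=\modu A$ with functors landing in $\modu D$. One can use $\beta$ and the induced map $X/J(A)X\to\dots$, since $G(N)$ and $F(N)$ are $D$-modules, so $\beta$ factors through $\soc X$ and $\alpha$ through $X/J(A)X$. Thus $(\beta,\alpha)$ amounts to $D$-linear maps $G(N)\to\soc X$ and $\topm X\to F(N)$, subject to a compatibility with $\mu_N$. The boundedness argument from Theorem~\ref{thm:map-finite} should then transfer, with the composite constraint handled by treating $\mu_N$ as a fixed ``base point''.

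The main obstacle is the constraint $\alpha\beta=\mu_N$. It is not homogeneous, so the set of structures is an affine rather than a linear space, and direct sums of objects with the same components do not simply scale. I expect to handle this by translating by a fixed solution where one exists. Alternatively, I would apply the comma-category bound separately to the $\beta$-part and the $\alpha$-part (each a genuine comma category between $\CC$ and $\modu A$ via $\soc$ and $\topm$), and deduce finiteness of $\f_A$ from finiteness of these two pieces. Since $\modu A$ is representation-finite, only finitely many $X$ occur up to isomorphism in a bounded-length range.
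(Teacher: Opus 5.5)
There is a genuine gap: the proposal never actually bounds the multiplicity vectors. Your opening reduction is fine. Under the hypothesis, an indecomposable object is determined by the multiplicities of its components, so it suffices to bound them. The doubling/perturbation idea gives no contradiction, though. An indecomposable (or non-split) object with components $(N^{2},X^{2})$ is perfectly compatible with the hypothesis, which only forbids two \emph{non-isomorphic} indecomposables with the same components, and $Z\oplus Z$ is not indecomposable, so it is not a competitor. Bounding multiplicities is exactly the hard content of Theorem~\ref{thm:map-finite} (weak positivity of $q$ plus Simson's criterion); it does not come from such a trick.

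Your fallback also fails as stated. You propose to bound the $\beta$-part in $\m(G,\soc)$ and the $\alpha$-part in $\m(\topm,F)$ separately and then deduce finiteness of $\f_A(\mu;G,F)$. But two structures on the same components, each of finite type, need not combine to finite type. For example, two pairs of subspaces of one vector space are each of finite type, but together they form the four-subspace problem. Restricting an indecomposable of $\f_A$ to each part yields finitely many types, but with unbounded multiplicities. Moreover, you cannot verify the uniqueness hypothesis for these pieces: when $\mu_N\neq0$, a triple $(N,X,\beta)$ does not extend to an object of $\f_A$ with $\alpha=0$. Finally, $\alpha\beta=\mu_N$ is quadratic, so translating by a fixed solution does not linearize it.

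Three ideas are missing.

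\emph{Removing $\mu$.} Split $G(N)=K(N)\oplus I(N)$, $F(N)=I(N)\oplus C(N)$ and $X\cong I(N)\oplus Y$. This puts every object in the normal form \eqref{eq:factorization-normal-form}. It gives a bijection on indecomposables with the reduced category $\f_A(0;K,C)$, whose objects are $(N,Y,b,a)$ with $ab=0$ (Lemma~\ref{lem:normal-form}), and property~$(P)$ passes to it.

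\emph{Splitting $\CC$ (the idea you lack).} Property~$(P)$ forces $K(T)=0$ or $C(T)=0$ for every indecomposable $T$. Otherwise $(T,D,b,0)$ and $(T,D,0,a)$ would be non-isomorphic indecomposables with the same components. Hence $\CC=\CC_1\oplus\CC_2$, which breaks the loop $N\to Y\to N$. The reduced category then becomes an \emph{iterated} comma category, not a fibre product of two comma categories. First take $\mathcal A=\m(K|_{\CC_1},\soc)$. Then take $\m(Q,C|_{\CC_2})$ over $\mathcal A$, where $Q(T_1,Y,b)=Y/(J(A)Y+\im b)$ encodes both $ab=0$ and $J(A)C=0$ (Lemma~\ref{lem:factorization-comma}).

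\emph{Applying Theorem~\ref{thm:map-finite} twice.} At each stage you must check the uniqueness hypothesis through $\Theta$. The second application uses $\x=\mathcal A$, so the finiteness of $\mathcal A$ from the first step is essential input.
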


We first pass to the reduced category of Definition~\ref{def:reduction},
whose objects are factorizations with zero composite.
By Lemma~\ref{lem:normal-form}, this construction induces a bijection
between the isomorphism classes of indecomposable objects. Two applications of
Theorem~\ref{thm:intro-comma}, using the socle and a semisimple
quotient of the middle term, then prove
Theorem~\ref{thm:intro-factorization}.

We apply these criteria to prove Theorem~\ref{thm:main} by
induction on the $R$-length of the algebra. After passage to a basic
algebra, Lemma~\ref{prop:structure} applied to the block decomposition
in \eqref{eq:localalgebra} gives
$$
UV=0,\qquad J(A)U=0=VJ(A),\qquad U\otimes_BV=0.
$$
These relations yield the factorization description above.
The local corner $A$ is a quotient of $\Lambda$, hence a composition
factor algebra. It is Nakayama by Proposition~\ref{prop:local-finite}. A suitable
proper quotient of $\Lambda$ gives representation-finiteness
of $B$ by induction.   Under the factorization description,
indecomposable objects with isomorphic components correspond
to indecomposable $\Lambda$-modules with the same composition
factors. Since $\Lambda$ is a composition factor algebra, the
corresponding modules, and hence the factorization objects, are isomorphic.
Theorem~\ref{thm:intro-factorization} therefore implies that
$\Lambda$ is representation-finite.

All the results in this paper were obtained in early October 2026.
During the final preparation of this manuscript, we became aware
of the work \cite{CCWX}, which has just appeared
on arXiv and proves the same conjecture.

The paper is organized as follows.
Section~\ref{sec:preliminaries} recalls the required facts about
Artin algebras and Krull--Schmidt categories.
Section~\ref{sec:subspace} proves weak positivity of the quadratic
form associated with a comma category and applies Simson's
criterion for finite prinjective type \cite[Theorem~1.6]{Simson}
to establish Theorem~\ref{thm:intro-comma}.
Section~\ref{sec:factorization} introduces factorization
categories, establishes their basic properties, and proves
Theorem~\ref{thm:intro-factorization} using
Theorem~\ref{thm:intro-comma}.
In Section~\ref{sec:structure}, we study composition factor algebras
and describe their module categories by factorizations. We then prove
Theorem~\ref{thm:main} and give examples.

\medskip
\phantomsection
\addcontentsline{toc}{subsection}{\protect\tocsubsection{}{}{Conventions}}
\noindent\textbf{Conventions.}
Throughout the paper, $R$ is a commutative Artin ring.
The conventions below apply unless stated otherwise.
Algebras are associative unital $R$-algebras that are finitely
generated as $R$-modules. Modules are finitely generated left modules.
We write $\modu\Lambda$ for the category of such $\Lambda$-modules
and $J(\Lambda)$ for the Jacobson radical of $\Lambda$.

All additive categories are essentially small and $R$-linear.
A category is \emph{Hom-finite} if its morphism spaces have finite
length over the base ring $R$. An additive category is \emph{Krull--Schmidt}
if every object is a finite direct sum of objects with local
endomorphism rings. Functors between $R$-linear categories are
assumed to be $R$-linear.

\section{Preliminaries}\label{sec:preliminaries}

\subsection{Artin algebras and modules}\label{sec:preli-artin}
We recall some basic notions and facts about Artin algebras
and their modules from \cite{ARS}.

For $M\in\modu \Lambda$, write $\rad M$ and $\soc M$ for its radical
and socle, respectively, and set $\topm M=M/\rad M$.
Thus $\rad M=J(\Lambda)M$ and $\topm M=M/J(\Lambda)M$. We denote composition length
over $\Lambda$ by $\len_\Lambda(M)$ and length over $R$ by $\len_R(M)$.

\begin{definition}\label{def:local-basic}
\begin{enumerate}
\item[\textup{(1)}] An algebra $\Lambda$ is \emph{local} if $\Lambda/J(\Lambda)$
is a division ring, and \emph{basic} if $\Lambda/J(\Lambda)$ is a product
of division rings.
\item[\textup{(2)}] A module is \emph{uniserial} if it has a unique
composition series.
\item[\textup{(3)}] An algebra is \emph{Nakayama} if its
indecomposable projective and indecomposable injective left
modules are uniserial.
\end{enumerate}
\end{definition}

\begin{fact}\label{fact:finite-length}
Let $M$ be a nonzero $\Lambda$-module.
\begin{enumerate}
\item[\textup{(1)}] The socle $\soc M$ is essential in $M$.   In particular,
$\soc M\cap N\ne0$ for every nonzero submodule $N$ of $M$.
\item[\textup{(2)}] If $\topm M$ is simple, then $\rad M$ is
the unique maximal submodule of $M$.
\end{enumerate}
Consequently, a module with simple top or simple socle is
indecomposable.
\end{fact}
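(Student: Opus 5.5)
The statement to prove is Fact~\ref{fact:finite-length}. Throughout we use that $M$ has finite length, being finitely generated over the Artin algebra $\Lambda$, together with the standard description $\rad M=J(\Lambda)M$ as the intersection of all maximal submodules of $M$.

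For (1), let $N\subseteq M$ be a nonzero submodule. Since $N$ has finite length, it contains a nonzero submodule $S$ of minimal length. By minimality $S$ is simple, so $S\subseteq\soc M$. Hence $S\subseteq\soc M\cap N$, and this intersection is nonzero. This is exactly the statement that $\soc M$ is essential in $M$.

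For (2), suppose $\topm M=M/\rad M$ is simple. Then $\rad M$ is a maximal submodule. Let $L$ be any maximal submodule of $M$. Since $\rad M$ is the intersection of all maximal submodules, we have $\rad M\subseteq L\subsetneq M$. Passing to $M/\rad M$, the image $L/\rad M$ is a proper submodule of a simple module, so it is zero. Therefore $L=\rad M$, and $\rad M$ is the unique maximal submodule.

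For the consequence, suppose $M=M_1\oplus M_2$ with both summands nonzero.
\begin{itemize}
\item If $\topm M$ is simple, note that $\topm M\cong\topm M_1\oplus\topm M_2$. Each $\topm M_i$ is nonzero by Nakayama's lemma, since $M_i\ne0$ is finitely generated and so $J(\Lambda)M_i\ne M_i$. A simple module cannot be such a direct sum, which is a contradiction.
\item If $\soc M$ is simple, note that $\soc M=\soc M_1\oplus\soc M_2$. Each $\soc M_i$ is nonzero by (1) applied to $M_i$, which gives the same contradiction.
\end{itemize}
Hence $M$ is indecomposable in either case.

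No step here is a real obstacle. The only point needing care is the nonvanishing of the top and socle of nonzero summands, which follows from Nakayama's lemma and from finite length respectively.
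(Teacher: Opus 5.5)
Your proof is correct. The paper gives no proof of this statement; it records it as a standard fact recalled from \cite{ARS}.

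The only difference from what the word ``Consequently'' suggests is in the last part. There one would derive indecomposability directly from (1) and (2), whereas you use additivity of $\topm$ and $\soc$ on direct sums together with Nakayama's lemma. Concretely, suppose $M=M_1\oplus M_2$ with both summands nonzero. If $\soc M$ is simple, then (1) gives $\soc M\cap M_i\neq 0$, hence $\soc M\subseteq M_i$ for both $i$. Thus $\soc M\subseteq M_1\cap M_2=0$, a contradiction. If $\topm M$ is simple, each proper summand $M_i$ lies in some maximal submodule, hence in $\rad M$ by (2). This forces $M=M_1+M_2\subseteq\rad M$, another contradiction.

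Both routes are equally short and rest on the same finite-length input.
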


Let $E_R$ be the direct sum of the injective envelopes of a
complete set of simple $R$-modules, and set
$$
\mathbb D=\Hom_R(-,E_R).
$$
Matlis duality $\mathbb D:\modu \Lambda\to\modu \Lambda^{\op}$ is an exact
contravariant duality. It preserves $R$-length and interchanges
projective and injective modules.
In particular, it preserves indecomposability and induces a
bijection between the isomorphism classes of simple modules
on the two sides. Consequently,
$\len_{\Lambda^{\op}}(\mathbb D(M))=\len_\Lambda(M)$
for every $M\in\modu\Lambda$.

Let $A$ be a local algebra and put $D=A/J(A)$. We regard
$D$-modules as $A$-modules through the canonical quotient
map. This identifies $\modu D$ with the full subcategory of
semisimple $A$-modules.
Since $D$ is a division ring, every $V\in\modu D$ is a finite
direct sum of copies of ${}_D D$. In particular,
$\len_A(V)=\dim_D V$.

\subsection{Direct sums and endomorphisms}\label{sec:preli-categories}
We collect some basic facts about direct sums and endomorphism rings
for later use.


\begin{definition}\label{def:representation-finite}
A Krull--Schmidt category is \emph{representation-finite}
if it has only finitely many isomorphism classes of
indecomposable objects.
\end{definition}

\begin{lemma}[{cf.\ \cite[Proposition~2.9 and the paragraph following Corollary~4.4]{Krause}}]\label{lem:radical-matrices}
Let $\CC$ be a Hom-finite Krull--Schmidt category. Let
$X_1,\ldots,X_r$ be pairwise nonisomorphic indecomposable
objects of $\CC$, and set
$$
X=\bigoplus_{i=1}^r X_i^{n_i},\qquad E_i=\End_\CC(X_i),
\qquad n_i\ge1.
$$
The block-diagonal projection induces an isomorphism
$$
\End_\CC(X)/J(\End_\CC(X))
\cong\prod_{i=1}^r M_{n_i}(E_i/J(E_i)).
$$
More precisely, an endomorphism $f=(f_{ij})$ belongs to
$J(\End_\CC(X))$ if and only if
$f_{ii}\in M_{n_i}(J(E_i))$ for every $i$.
\end{lemma}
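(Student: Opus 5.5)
We prove the statement by reducing it to the standard description of the radical of the endomorphism ring of a single indecomposable, followed by a matrix computation.

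\emph{Step 1: radicals of indecomposables.} Since $\CC$ is Hom-finite, each $E_i=\End_\CC(X_i)$ has finite $R$-length. It is local because $\CC$ is Krull--Schmidt, so $J(E_i)$ consists exactly of the non-invertible endomorphisms. We use the radical ideal $\rad_\CC$ of the category. Recall that $\rad_\CC(Y,Z)$ consists of those $h$ such that $1_Y-gh$ is invertible for all $g:Z\to Y$. Two facts are standard:
\begin{itemize}
\item for indecomposable $Y$ we have $\rad_\CC(Y,Y)=J(\End(Y))$;
\item for $i\ne j$ we have $\rad_\CC(X_i,X_j)=\Hom_\CC(X_i,X_j)$.
\end{itemize}
The second fact holds because if some $gh$ were invertible in the local ring $E_i$, then $X_i$ would be a direct summand of $X_j$, forcing $X_i\cong X_j$. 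The key property of $\rad_\CC$ that we need is the following: it is a two-sided ideal compatible with finite direct sums, so that
\[
J(\End(Y\oplus Z))=\begin{pmatrix}\rad(Y,Y)&\rad(Z,Y)\\ \rad(Y,Z)&\rad(Z,Z)\end{pmatrix},
\]
and more generally $J(\End(X))=\rad_\CC(X,X)$ entrywise (cf.\ \cite{Krause}). This identity is the main point, and we would cite it or prove it directly as follows. The set $I$ of matrices $f$ with $f_{ii}\in M_{n_i}(J(E_i))$, and arbitrary off-diagonal blocks, is a two-sided ideal. This uses that a composite $X_i\to X_j\to X_i$ with $i\ne j$ lies in $J(E_i)$, by the second fact above.

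\emph{Step 2: $I\subseteq J(\End X)$.} It suffices to show that $1-f$ is invertible for every $f\in I$; since $I$ is an ideal, this gives $I\subseteq J$. Reduce modulo the ideal $I$. The map $\pi:\End(X)\to\prod_i M_{n_i}(E_i/J(E_i))$, obtained by taking the diagonal blocks and reducing them, is a ring homomorphism. Multiplicativity holds because the off-diagonal contributions $f_{ij}g_{ji}$ land in $M_{n_i}(J(E_i))$. The map $\pi$ is surjective with kernel $I$. Next, $I$ is nilpotent, or at least contained in the radical. Indeed, $\End(X)$ is an Artin $R$-algebra, so it suffices to show that $I$ is a nil ideal modulo $J$. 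Alternatively, one can argue directly: if $1-f$ maps to $1$ under $\pi$, then by a Harada--Sai / Fitting-type argument applied to the finite length ring $\End(X)$, the element $f$ is nilpotent modulo units. Concretely, we would show that $\pi$ reflects invertibility. If $\pi(h)$ is invertible, pick $h'$ with $\pi(h')=\pi(h)^{-1}$. Then $hh'=1-f$ with $f\in I$. Since $\End(X)$ is Artinian, $hh'$ is invertible iff it is not a zero divisor on the left regular module. We would prove that $1-f$ is injective on $\End(X)$, and we expect this to be the main technical step. The cleanest route is to use that $\rad_\CC$ is an ideal with $\rad_\CC(X,X)=I$ by Step 1, and that every element of $\rad_\CC(X,X)$ is quasi-regular by definition (take $g=1$). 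This gives $I\subseteq J$ immediately.

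\emph{Step 3: conclusion.} We have $\End(X)/I\cong\prod_i M_{n_i}(E_i/J(E_i))$. Each $E_i/J(E_i)$ is a division ring, so this quotient is semisimple, and therefore $J(\End X)\subseteq I$. Combined with Step 2, this gives $J(\End X)=I$ and hence the stated isomorphism.
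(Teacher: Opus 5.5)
Your final argument is correct and follows the standard route behind \cite{Krause}, which the paper cites in place of a proof. You invoke the fact from \cite{Krause} that $\rad_\CC$ is a two-sided ideal computed entrywise on finite direct sums; Step~1 then gives $\rad_\CC(X,X)=I$, hence $I\subseteq J(\End_\CC(X))$, and the surjective ring map $\pi$ with kernel $I$ onto the semisimple ring $\prod_i M_{n_i}(E_i/J(E_i))$ gives the reverse inclusion. The tentative alternatives in Step~2 (a nil ideal ``modulo $J$'', a Harada--Sai/Fitting argument, injectivity of $1-f$) are not arguments and should be dropped: the inclusion $I\subseteq J$ rests entirely on the cited fact that $\rad_\CC$ is closed under addition and composition.
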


\begin{lemma}\label{lem:irreducible-endomorphism}
	Let $\Bbbk$ be a field and $\CC$ a Hom-finite
	Krull--Schmidt $\Bbbk$-linear category. Let
	$F:\CC\to\modu\Bbbk$ be a faithful $\Bbbk$-linear functor.
	Suppose that $f\in\End_\CC(M)$ satisfies $\phi(f)=0$
	for some monic irreducible polynomial
	$\phi\in\Bbbk[T]$ of degree $h$.
	Then $M$ has at most $\dim_{\Bbbk}F(M)/h$
	pairwise nonisomorphic indecomposable direct summands.
\end{lemma}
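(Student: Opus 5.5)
Decompose $M=\bigoplus_{i=1}^r M_i^{n_i}$ with the $M_i$ pairwise nonisomorphic indecomposables, $n_i\ge1$, and $E_i=\End_\CC(M_i)$. The claim is $r\le \dim_\Bbbk F(M)/h$. The plan is to use Lemma~\ref{lem:radical-matrices}: the algebra $\Gamma=\End_\CC(M)$ surjects onto $\prod_i M_{n_i}(D_i)$ with $D_i=E_i/J(E_i)$, a finite-dimensional division $\Bbbk$-algebra by Hom-finiteness. The image $\bar f=(\bar f_1,\dots,\bar f_r)$ of $f$ still satisfies $\phi(\bar f)=0$, so each $\bar f_i$ satisfies $\phi(\bar f_i)=0$ in $M_{n_i}(D_i)$.

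The first step is to show that each component ``sees'' a factor of degree at least $h$. Since $\phi(\bar f_i)=0$ and $\phi$ is irreducible, the $\Bbbk$-subalgebra $\Bbbk[\bar f_i]\subseteq M_{n_i}(D_i)$ is a quotient of $\Bbbk[T]/(\phi)$, a field of degree $h$. It is nonzero because $M_{n_i}(D_i)\ne0$ and $1$ maps to $1$, so $\Bbbk[\bar f_i]\cong \Bbbk[T]/(\phi)$. Then $M_{n_i}(D_i)$ contains a field $L$ of $\Bbbk$-dimension $h$. Equivalently, the simple module $D_i^{n_i}$ over $M_{n_i}(D_i)$ is an $L$-vector space, so $h\mid n_i\dim_\Bbbk D_i$, and in particular $n_i\dim_\Bbbk D_i\ge h$.

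The second step bounds $\dim_\Bbbk F(M)$ from below by $\sum_i n_i\dim_\Bbbk D_i$. Faithfulness makes $F(M_i)$ a faithful $E_i$-module, since $E_i\hookrightarrow\End_\Bbbk F(M_i)$. A nonzero faithful module over the local finite-dimensional algebra $E_i$ has a composition factor, which is the unique simple $E_i$-module $S_i$. As $E_i/J(E_i)=D_i$ is a division ring, $S_i\cong D_i$, so $\dim_\Bbbk F(M_i)\ge\dim_\Bbbk D_i$. Here $F(M_i)\neq0$ because $\mathrm{id}_{M_i}\ne0$ and $F$ is faithful. Additivity of $F$ then gives
$$
\dim_\Bbbk F(M)=\sum_i n_i\dim_\Bbbk F(M_i)\ge\sum_i n_i\dim_\Bbbk D_i\ge rh,
$$
which is the claim.

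The main point to check is the passage from $\phi(f)=0$ in $\Gamma$ to $\phi(\bar f_i)=0$ in the block $M_{n_i}(D_i)$. This holds because the isomorphism in Lemma~\ref{lem:radical-matrices} is a $\Bbbk$-algebra homomorphism composed with a projection onto a factor ring, and such maps preserve polynomial identities. The remaining subtlety is that $\Bbbk[\bar f_i]$ really has dimension $h$; this needs only the irreducibility of $\phi$ and $1\neq0$ in the block. No argument about eigenvalues is needed.
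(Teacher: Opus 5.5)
Your proof is correct and follows essentially the same route as the paper: both show that each simple $\End_\CC(M)$-module (in your notation $D_i^{n_i}$) has $\Bbbk$-dimension at least $h$ because $\phi(f)=0$ with $\phi$ irreducible, and then bound $\sum_i n_i\dim_\Bbbk D_i$ above by $\dim_\Bbbk F(M)$. The only difference is in that last inequality: the paper gets it by noting that every simple module occurs as a composition factor of the faithful module $F(M)$, while you use additivity of $F$ together with $F(M_i)\neq0$, so your version of that step needs only that $F$ sends nonzero objects to nonzero spaces.
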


\begin{proof}
	Write
	$$
	M=\bigoplus_{i=1}^{t}M_i^{n_i},
	$$
	where the $M_i$ are pairwise nonisomorphic
	indecomposable objects. Set $E=\End_\CC(M)$ and
	$V=F(M)$. The action $a\cdot v=F(a)(v)$ makes
	$V$ a left $E$-module, which is faithful since
	$F$ is faithful.
	
	By Lemma~\ref{lem:radical-matrices}, $E$ has exactly
	$t$ isomorphism classes of simple left modules,
	represented by $S_1,\ldots,S_t$.
	Since $V$ is a faithful module over the finite-dimensional
	$\Bbbk$-algebra $E$, each $S_i$ occurs as a composition
	factor of $V$. Indeed, choosing a $\Bbbk$-basis $v_1,\ldots,v_d$ of $V$
	gives an injective $E$-linear map
	$E\to V^{\oplus d}$, $a\mapsto(av_1,\ldots,av_d)$.
	
	Since $\phi(f)=0$ and $\phi$ is irreducible,
	the action of $f$ on $S_i$ has minimal polynomial
	$\phi$. Thus $\dim_{\Bbbk}S_i\ge h$, and hence
	$
	th\le\sum_{i=1}^{t}\dim_\Bbbk S_i
	\le\dim_\Bbbk V.
	$
	This proves the assertion.
\end{proof}

For a ring $E$, let $E^\times$ denote its group of units. We shall use the following elementary lemma.

\begin{lemma}\label{lem:transitive}
Let $E_1,E_2$ be local Artin rings and $W$ a nonzero
$E_1$--$E_2$-bimodule.
Suppose that $E_1^\times\times E_2^\times$ acts transitively
on $W\setminus\{0\}$ by $(u,v)\cdot w=uwv^{-1}$.
Then $J(E_1)W=0=WJ(E_2)$.
\end{lemma}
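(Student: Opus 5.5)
We argue by contradiction and show that a nonzero element of $J(E_1)W$ or of $WJ(E_2)$ would destroy transitivity. Two observations do the work. First, $J(E_1)W$ and $WJ(E_2)$ are sub-bimodules of $W$, hence stable under the action of $E_1^\times\times E_2^\times$. Second, $W\ne JW$ for $J=J(E_1)$ by Nakayama's lemma; this needs $W$ to be finitely generated, which we assume here, as it holds in all intended applications where $W$ has finite length.

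The key step is the following. Let $W'\subsetneq W$ be a proper subset of $W$, containing $0$, that is stable under the group action. Transitivity forces $W'\setminus\{0\}$ to be either empty or all of $W\setminus\{0\}$. Since $W'$ is proper, the second option is excluded, so $W'=0$.

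Now apply this with $W'=J(E_1)W$. Because $E_1$ is local Artinian, $J(E_1)$ is nilpotent, so $J(E_1)W\ne W$ whenever $W\neq 0$; alternatively this follows from Nakayama's lemma once $W$ is finitely generated over $E_1$. Thus $J(E_1)W$ is a proper invariant subset, and the key step gives $J(E_1)W=0$. The symmetric argument with $W'=WJ(E_2)$, using nilpotence of $J(E_2)$ acting on the right, gives $WJ(E_2)=0$.

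The only point needing care is properness. Nilpotence of the radical of an Artin ring settles it without any finiteness hypothesis on $W$: if $J^n=0$ and $JW=W$, then $W=J^nW=0$, a contradiction. So no real obstacle remains.

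One could try to avoid properness by using an invertibility trick instead. If $w\ne0$ and $jw\ne0$ with $j\in J(E_1)$, transitivity gives $uwv^{-1}=jw$. But deriving a contradiction this way seems harder than the invariant-subset argument, so we would use the latter.
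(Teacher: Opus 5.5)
Your proposal is correct and is essentially the paper's proof: $J(E_1)W$ and $WJ(E_2)$ are stable under the action, so transitivity forces each to be either $0$ or all of $W$, and the second option is ruled out by nilpotence of the radical. As you note yourself, nilpotence makes the finite-generation/Nakayama detour and the "invertibility trick" remark unnecessary.
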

\begin{proof}
For convenience, denote $J(E_1)$ by $J$ and set $Z=JW$. We have
$$
uZv^{-1}=(uJ)(Wv^{-1})=JW=Z
$$
for all $u\in E_1^\times$ and $v\in E_2^\times$.
Suppose that $Z\ne0$ and choose $0\ne z\in Z\subseteq W$. For every
$0\ne w\in W$, transitivity gives units $u,v$ such that
$w=uzv^{-1}$.   Hence $w\in Z$,   so $JW=W$.
Since $J$ is nilpotent, $JW=W$ implies $W=0$, a contradiction. Thus $J(E_1)W=0$.
Similarly, we have $WJ(E_2)=0$.
\end{proof}

The following lemma is well known. We include a proof for completeness.

\begin{lemma}\label{lem:nilpotent-quotient}
Let $\mathcal A$ be a Hom-finite Krull--Schmidt category and
$\mathcal B$ an additive category. Let $F:\mathcal A\to\mathcal B$ be a full,
essentially surjective additive $R$-linear functor. Suppose
that, for every $X\in\mathcal A$, the ideal
$$
I_X=\ker\bigl(\End_{\mathcal A}(X)
\longrightarrow\End_{\mathcal B}(F(X))\bigr)
$$
is nilpotent. Then \(\mathcal B\) is also Hom-finite and Krull–Schmidt. The functor \(F\) reflects isomorphisms and induces a bijection on isomorphism classes preserving finite direct sums and indecomposability.
\end{lemma}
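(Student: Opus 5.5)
The plan is to first show that for each $X\in\mathcal A$ the induced ring map $\End_{\mathcal A}(X)\to\End_{\mathcal B}(F(X))$ is a surjection with nilpotent kernel $I_X$. Surjectivity holds because $F$ is full. Local rings and idempotents are then controlled modulo the nilpotent ideal $I_X$. Hom-finiteness of $\mathcal B$ is immediate: every object of $\mathcal B$ is isomorphic to some $F(X)$, and $\Hom_{\mathcal B}(F(X),F(Y))$ is an $R$-linear quotient of $\Hom_{\mathcal A}(X,Y)$ by fullness, hence of finite length.

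Next I will show that $F$ reflects isomorphisms. Let $f:X\to Y$ be such that $F(f)$ is an isomorphism. By fullness, choose $g:Y\to X$ with $F(g)=F(f)^{-1}$. Then $F(gf-1_X)=0$, so $gf-1_X\in I_X$ is nilpotent and $gf$ is invertible; similarly $fg$ is invertible. Hence $f$ has a left and a right inverse, and is therefore an isomorphism.

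For indecomposability, if $X$ is indecomposable then $\End(X)$ is local, so its quotient $\End(F(X))\cong\End(X)/I_X$ is local and nonzero. It is nonzero because $I_X$ is nilpotent while $1\ne0$ when $X\neq 0$. So $F(X)$ has a local endomorphism ring. Since every object of $\mathcal A$ is a finite direct sum of such $X$, and $F$ is additive and essentially surjective, every object of $\mathcal B$ is a finite direct sum of objects with local endomorphism rings. Thus $\mathcal B$ is Krull--Schmidt. Conversely, if $F(X)$ is indecomposable, write $X=\bigoplus X_i$. Then $F(X)=\bigoplus F(X_i)$, and each $F(X_i)\ne0$ for $X_i\neq 0$, so $X$ has exactly one nonzero summand and is indecomposable.

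For the bijection on isomorphism classes, surjectivity comes from essential surjectivity. For injectivity, suppose $F(X)\cong F(Y)$ via some $\phi$. Lift $\phi$ to $f$ using fullness; then $F(f)=\phi$ is an isomorphism, so $f$ is an isomorphism by reflection. Additivity gives compatibility with direct sums. The only delicate point I expect is checking that $F(X)\neq0$ for $X\ne0$, which follows from nilpotence of $I_X$ as above. Everything else is routine.
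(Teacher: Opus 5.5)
Your proposal is correct and follows the paper's proof essentially step for step: Hom-finiteness from fullness and essential surjectivity, local endomorphism rings via $\End_{\mathcal B}(F(X))\cong\End_{\mathcal A}(X)/I_X$, reflection of isomorphisms by lifting $F(f)^{-1}$ and using that $1$ plus a nilpotent element is invertible, and the bijection on isomorphism classes from essential surjectivity together with fullness. Your explicit check that $F(X)\neq 0$ whenever $X\neq 0$, which shows that indecomposability is also reflected, spells out what the paper states in one line.
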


\begin{proof}
	By fullness and $R$-linearity,
	$\Hom_{\mathcal B}(F(X),F(Y))$ is a quotient of
	$\Hom_{\mathcal A}(X,Y)$ as an $R$-module for all
	$X,Y\in\mathcal A$.
	Essential surjectivity therefore implies that
	$\mathcal B$ is Hom-finite.
	
	For each indecomposable object $X\in\mathcal A$, we have
	$
	\End_{\mathcal B}(F(X))
	\cong \End_{\mathcal A}(X)/I_X.
	$
	Since $\End_{\mathcal A}(X)$ is local and $I_X$ is
	nilpotent, this quotient is a nonzero local ring.
	Applying $F$ to indecomposable decompositions in
	$\mathcal A$ and using essential surjectivity shows
	that $\mathcal B$ is Krull--Schmidt.
	
	We next show that $F$ reflects isomorphisms.
	Let $u:X\to Y$ be a morphism such that $F(u)$ is invertible.
	By fullness, choose $v:Y\to X$ with $F(v)=F(u)^{-1}$.
	Then
	$$
	v\circ u=1_X+\varepsilon_X,\qquad
	u\circ v=1_Y+\varepsilon_Y,
	$$
	where $\varepsilon_X\in I_X$ and $\varepsilon_Y\in I_Y$.
	Since $\varepsilon_X$ and $\varepsilon_Y$ are nilpotent,
	both $v\circ u$ and $u\circ v$ are invertible.
	Hence $u$ is invertible.
	Essential surjectivity gives surjectivity on isomorphism
	classes. By fullness, any isomorphism $F(X)\cong F(Y)$
	lifts to a morphism $X\to Y$, which is invertible by the
	preceding argument.
	Thus $F$ induces a bijection on isomorphism classes.
	Preservation of finite direct sums follows from additivity.
	
	The isomorphism $
	\End_{\mathcal B}(F(X))
	\cong \End_{\mathcal A}(X)/I_X
	$ shows that $F$
	preserves indecomposability.
	Since $F$ is additive and $F(X)\cong0$ implies $X\cong0$,
	it also reflects indecomposability.
\end{proof}

\section{A finiteness criterion for comma categories}\label{sec:subspace}
Throughout this section, $D$ denotes a division Artin $R$-algebra.

\begin{definition}[{cf.\ \cite[II, \S6]{MacLane}}]\label{def:map-category}
For Hom-finite Krull--Schmidt categories $\x$ and $\y$,
fix additive functors
$$
G:\x\longrightarrow\modu D,\qquad
F:\y\longrightarrow\modu D.
$$
The \emph{comma category} $\m(G,F)=(G\downarrow F)$ is defined as follows.
\begin{itemize}
\item Objects are triples $(X,Y,u)$ with $X\in\x$,
$Y\in\y$ and a homomorphism of $D$-modules
$u:G(X)\to F(Y)$.
\item A morphism $(X,Y,u)\to(X',Y',u')$ is a pair
$(f,g)$ with $f:X\to X'$ in $\x$ and
$g:Y\to Y'$ in $\y$ satisfying
$$
F(g)\circ u=u'\circ G(f).
$$
\end{itemize}
Identities and composition are defined componentwise.
\end{definition}

\begin{remark}\label{rem:map-category-properties}
A morphism $(f,g)$ in $\m(G,F)$ is an isomorphism
if and only if both $f$ and $g$ are isomorphisms.
The category $\m(G,F)$ is additive and idempotent complete by
\cite[Proposition~3.5]{First}, with direct sums formed
componentwise. Each morphism space is an $R$-submodule of
the direct sum of the two component morphism spaces, so
$\m(G,F)$ is Hom-finite.   It is therefore
Krull--Schmidt by \cite[Corollary~4.4]{Krause}.
If $X,Y$ are indecomposable and $u\ne0$, then $(X,Y,u)$
is indecomposable because a nontrivial decomposition would
place $X$ and $Y$ in different summands and force $u=0$.
\end{remark}

We now state the main theorem of this section.

\begin{theorem}\label{thm:map-finite}
Let $\m(G,F)$ be as in Definition~\ref{def:map-category},
and assume that $\x$ and $\y$ are
representation-finite. Suppose that any two indecomposable
objects $(X,Y,u)$ and $(X',Y',u')$ of $\m(G,F)$
are isomorphic whenever $X\cong X'$ and $Y\cong Y'$.
Then $\m(G,F)$ is representation-finite.
\end{theorem}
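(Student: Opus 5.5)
Following the outline in the introduction, I will reduce $\m(G,F)$ to a poset/vectorspace category and then apply Simson's criterion for finite prinjective type \cite[Theorem~1.6]{Simson}. That criterion requires weak positivity of the associated Tits quadratic form.

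\textbf{Step 1: the quadratic form.} Let $X_1,\dots,X_p$ and $Y_1,\dots,Y_q$ be representatives of the indecomposables of $\x$ and $\y$. Put $g_i=\dim_D G(X_i)$, $f_j=\dim_D F(Y_j)$ and $e_i=\ell_R(\End(X_i)/J)$, and similarly for the $Y_j$. For a dimension vector $(a,b)\in\mathbb Z_{\ge0}^{p+q}$, the objects with $X\cong\bigoplus X_i^{a_i}$ and $Y\cong\bigoplus Y_j^{b_j}$ are parametrized by the $R$-module $\Hom_D(G(X),F(Y))$. This module has length $\ell_R(D)\sum_{i,j}a_ib_jg_if_j$, and the group $\operatorname{Aut}(X)\times\operatorname{Aut}(Y)$ acts on it. I take the form
\[
q(a,b)=\sum_{i,j}a_ia_j\,\ell_R\Hom(X_i,X_j)+\sum_{k,l}b_kb_l\,\ell_R\Hom(Y_k,Y_l)-\ell_R(D)\sum_{i,j}a_ib_jg_if_j ,
\]
which measures $\ell_R(\End X\oplus\End Y)$ minus the length of the parameter space.

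\textbf{Step 2: weak positivity from the hypothesis.} I will show $q(a,b)>0$ for every nonzero $(a,b)\ge0$ by a dimension-count argument. The reduction to the case of a base field is the delicate point; I would pass to the residue field or work with $R$-lengths throughout, mimicking the Kac/Gabriel argument. Decompose a generic object $(X,Y,u)$ into indecomposables $M_1,\dots,M_s$. By hypothesis, each indecomposable is determined by its pair of component classes. Hence there are only finitely many indecomposables for each dimension vector, and so only finitely many isomorphism classes of objects for each dimension vector. Consequently some orbit is dense, i.e. of full length. The stabilizer of $u$ is $\operatorname{Aut}(X,Y,u)$, which contains at least the scalars from the nonzero endomorphism ring of $(X,Y,u)$. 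This gives $q(a,b)\ge \ell_R\End(X,Y,u)>0$.

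\textbf{Step 3: conclusion via Simson.} Simson's theorem says a poset-type/prinjective category of this shape has finite type iff its form is weakly positive. By that theorem, Steps 1 and 2 together give representation-finiteness of $\m(G,F)$. I would also use Lemma~\ref{lem:irreducible-endomorphism} to bound multiplicities. Alternatively, a direct route is available: weak positivity gives finitely many positive roots, and each indecomposable has a root as its dimension vector. Combined with the hypothesis that dimension vectors determine indecomposables, this gives finitely many indecomposables.

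\textbf{Main obstacle.} I expect Step 2 to be the main obstacle. Making the orbit-dimension argument rigorous over a general Artin ring $R$ and a division ring $D$, rather than an algebraically closed field, requires replacing varieties by counting or length arguments. One must also identify $\m(G,F)$ with a category to which \cite{Simson} applies. In particular, the functors $G,F$ must be translated into a bimodule $\Hom$-problem over $\End$ of additive generators.
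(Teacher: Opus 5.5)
Your overall strategy (derive weak positivity of a Tits-type form from the hypothesis, then apply Simson's Theorem~1.6) is the paper's. However, Step~2 breaks down when the residue field $\Bbbk=R/\ker(R\to D)$ is finite, which is allowed (e.g.\ $R=\mathbb F_p$). Then $W=\Hom_D(G(X),F(Y))$ is a finite set, so ``finitely many orbits for each dimension vector'' holds automatically and carries no information. Counting gives only at least $|W|/|\Bbbk|^{c}=|\Bbbk|^{w-c}\ge1$ orbits when $w\ge c$, and ``dense orbit'' has no content. Over an infinite field your idea is essentially Case~(1) of the proof of Proposition~\ref{prop:map-positivity}. There the scalar pairs $(s,s)$ act trivially, so each orbit is the image of a polynomial map in $c-1$ variables and lies in a proper hypersurface; finitely many such hypersurfaces cannot cover $\Bbbk^w$. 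For finite $\Bbbk$ the paper needs a genuinely new idea: it applies the hypothesis at all multiples $h(\mathbf n,\mathbf m)$ with $h$ prime. It counts the objects $Z_u$ admitting an endomorphism annihilated by an irreducible polynomial of degree $h$. Lemma~\ref{lem:irreducible-endomorphism}, together with uniqueness of indecomposables with given components, bounds their number polynomially in $h$. On the other hand, Burnside's formula applied to the companion-matrix elements $(M_\phi,M_\phi)$ yields at least $(|\Bbbk|^h-|\Bbbk|)/h$ of them if $w\ge c$. Your proposal has no substitute for this.

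Second, the form in Step~1, built from $\ell_R\Hom_{\x}(X_i,X_j)$ in the original categories, is not the one Simson's theorem handles. That theorem is also not an unconditional ``finite type iff weakly positive'' statement: it needs the piecewise peak conditions, namely that $\End(X_i)$ and $\End(Y_j)$ are division rings and the bimodule is faithful on both sides. For example, take $\x$ to be the projective $k[t]/(t^2)$-modules, $G=k\otimes_{k[t]/(t^2)}-$, $\y=\modu k$ and $F(Y)=Y\oplus Y$. Your form is $2a^2+b^2-2ab=a^2+(a-b)^2$, which is positive definite, yet isomorphism classes in $\m(G,F)$ correspond to Kronecker modules.

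The paper fixes this in three steps.
\begin{itemize}
\item It discards indecomposables killed by $G$ or $F$ and passes to $\x/\ker G$ and $\y/\ker F$. The kernel ideals are nilpotent, so Lemma~\ref{lem:nilpotent-quotient} preserves isomorphism classes, indecomposability and the hypothesis.
\item Faithfulness then makes every Hom space a $\Bbbk$-vector space, which settles your open ``reduction to a field''.
\item It deduces division endomorphism rings from the hypothesis. The group $E_Y^\times\times E_X^\times$ acts transitively on $\Hom_D(G(X_1),F(Y_1))\setminus\{0\}$, so Lemma~\ref{lem:transitive} and faithfulness give $J(E_X)=0=J(E_Y)$.
\end{itemize}
Finally, your ``direct route'' through roots is unsupported: nothing shows that indecomposables have dimension vectors with bounded $q$-value.
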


\subsection{Quadratic forms and weak positivity}
Consider an additive
functor $G:\mathcal A\to\modu D$, where
$\mathcal A$ is a Hom-finite Krull--Schmidt category.
We say that $G$ is \emph{faithful on objects} if
$G(X)=0$ implies $X\cong0$.
The kernel ideal of $G$ is given by
$$
(\ker G)(X,Y)=\{f\in\Hom_{\mathcal A}(X,Y):G(f)=0\}.
$$
Set $\overline{\mathcal A}=\mathcal A/\ker G$, and let
$\pi:\mathcal A\to\overline{\mathcal A}$ be the canonical
quotient functor. Then $G$ induces a faithful additive
functor $\overline G:\overline{\mathcal A}\to\modu D$ with
$G=\overline G\circ\pi$.

For a Hom-finite Krull--Schmidt category $\mathcal B$ and an
additive functor $F:\mathcal B\to\modu D$, the quotient
functor induces
$$
\Pi:\m(G,F)\longrightarrow\m(\overline G,F),
\qquad (X,Y,u)\longmapsto(\pi(X),Y,u),
$$
with $\Pi(f,g)=(\pi(f),g)$ on morphisms.

\begin{lemma}\label{lem:faithful-image}
Assume that $G$ is faithful on objects.
Then $\overline{\mathcal A}$ is Hom-finite
and Krull--Schmidt. Both $\pi$ and $\Pi$ induce bijections on
isomorphism classes preserving direct sums and
indecomposability. Similarly, the induced functor
$\m(F,G)\to\m(F,\overline G)$ induces a
bijection on isomorphism classes preserving direct sums
and indecomposability.
\end{lemma}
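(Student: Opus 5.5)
The plan is to apply Lemma~\ref{lem:nilpotent-quotient} three times: to $\pi:\mathcal A\to\overline{\mathcal A}$, to $\Pi:\m(G,F)\to\m(\overline G,F)$, and to the analogous functor $\m(F,G)\to\m(F,\overline G)$. Each of these functors is the identity on objects, so it is essentially surjective, and it is full by construction. In each case the task therefore reduces to showing that the kernel ideal on endomorphism rings is nilpotent. Once that is done, Lemma~\ref{lem:nilpotent-quotient} gives at once that the target is Hom-finite and Krull--Schmidt, and that the functor induces a bijection on isomorphism classes preserving direct sums and indecomposability.

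\textbf{Step 1: the functor $\pi$.} For $X\in\mathcal A$, the ideal $I_X=\{f\in\End_{\mathcal A}(X):G(f)=0\}$ is the one to control. I will show $I_X\subseteq J(\End_{\mathcal A}(X))$; since $\End_{\mathcal A}(X)$ has finite length over $R$, its radical is nilpotent, and hence so is $I_X$.

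Write $X=\bigoplus_i X_i^{n_i}$ and use Lemma~\ref{lem:radical-matrices}. It suffices to show that each diagonal block $f_{ii}$ of $f\in I_X$ has entries in $J(E_i)$. Since $I_X$ is a two-sided ideal, composing with the inclusions and projections of the summands reduces this to a single claim: if $h\in E_i=\End(X_i)$ satisfies $G(h)=0$, then $h\in J(E_i)$.

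This claim holds because $E_i$ is local. If $h\notin J(E_i)$, then $h$ is invertible, so $G(1_{X_i})=G(h^{-1})G(h)=0$. That forces $G(X_i)=0$, and faithfulness on objects then gives $X_i\cong0$, a contradiction.

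\textbf{Step 2: the functors $\Pi$ and $\m(F,G)\to\m(F,\overline G)$.} In $\m(G,F)$, fullness holds because a morphism in $\m(\overline G,F)$ is a pair $(\pi(f),g)$ whose compatibility condition only involves $\overline G(\pi f)=G(f)$; so any lift $f$ of the first component gives a morphism $(f,g)$ in $\m(G,F)$. The kernel on $\End(X,Y,u)$ is the set of pairs $(f,0)$ with $G(f)=0$.

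Since $m\mapsto f$ defines an injective ring homomorphism from this kernel into $I_X$ (multiplication is componentwise), and $I_X$ is nilpotent by Step 1, the kernel is nilpotent as well. By Remark~\ref{rem:map-category-properties}, $\m(G,F)$ is Hom-finite and Krull--Schmidt, so Lemma~\ref{lem:nilpotent-quotient} applies.

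The target $\m(\overline G,F)$ need not be verified in advance: Lemma~\ref{lem:nilpotent-quotient} yields that it is Hom-finite and Krull--Schmidt. The category $\m(F,G)$ is handled by the same argument with the roles of the two components swapped.

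\textbf{Main obstacle.} The only nontrivial point is the nilpotency in Step 1, specifically the reduction from general $X$ to indecomposable summands. This needs care because the off-diagonal entries of $f\in I_X$ are unconstrained; Lemma~\ref{lem:radical-matrices} removes the issue, since membership in the radical depends only on the diagonal blocks.
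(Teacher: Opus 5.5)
Your proposal is correct and follows essentially the same route as the paper. Like the paper, you show $I_X\subseteq J(\End_{\mathcal A}(X))$ by reducing, via Lemma~\ref{lem:radical-matrices}, to the indecomposable summands, where locality of $\End(X_i)$ and $G(X_i)\neq 0$ do the work, and then you apply Lemma~\ref{lem:nilpotent-quotient} to $\pi$, to $\Pi$, and to the swapped comma functor; your identification of the kernel of $\Pi$ on endomorphisms as $\{(f,0):f\in I_X\}$ and your check that the source categories are Hom-finite Krull--Schmidt by Remark~\ref{rem:map-category-properties} just make explicit what the paper leaves implicit.
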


\begin{proof}
For $X\in\mathcal A$, set
$$
I_X=\ker\bigl(\End_{\mathcal A}(X)\longrightarrow\End_D(G(X))\bigr).
$$
We first show that $I_X\subseteq J(\End_{\mathcal A}(X))$.
If $X$ is indecomposable, $I_X$ is a proper ideal because
$G(X)\ne0$, and hence is contained in the radical of the
local ring $\End_{\mathcal A}(X)$. In general, write $X$ as
a direct sum of indecomposables. If $G(f)=0$, the preceding
observation on each indecomposable summand and
Lemma~\ref{lem:radical-matrices} give
$f\in J(\End_{\mathcal A}(X))$.
Thus $I_X$ is nilpotent. Since $\pi$ is full and surjective
on objects, Lemma~\ref{lem:nilpotent-quotient} proves the
assertions about $\overline{\mathcal A}$ and $\pi$.

Since the quotient category has the same objects as
$\mathcal A$ and $\overline G\circ\pi=G$, the functor $\Pi$
is surjective on objects. To prove fullness, take a morphism
$(\overline f,g):\Pi(X,Y,u)\to \Pi(X',Y',u')$ and choose
$f:X\to X'$ with $\pi(f)=\overline f$. Since $(\overline f,g)$ is a morphism in $\m(\overline G,F)$, we have
$$
F(g)\circ u=u'\circ\overline G(\overline f)=u'\circ G(f).
$$
Thus $(f,g)$ is a morphism in $\m(G,F)$ and
$\Pi(f,g)=(\overline f,g)$, so $\Pi$ is full.
For $Z=(X,Y,u)$, the kernel of the induced map on
endomorphism rings is
$$
\{(f,0):f\in I_X\}.
$$
Since $I_X$ is nilpotent, this kernel is nilpotent.
By Lemma~\ref{lem:nilpotent-quotient}, $\Pi$ induces a
bijection on isomorphism classes preserving direct sums
and indecomposability.
\end{proof}

Let $\mathfrak m=\ker(R\to D)$ and $\Bbbk=R/\mathfrak m$.
Since $D$ is a division ring, $\mathfrak m$ is prime.
Since $R$ is a commutative Artinian ring, the prime ideal
$\mathfrak m$ is maximal by \cite[Proposition~8.1]{AM}.
Hence $\Bbbk=R/\mathfrak m$ is a field.
Since $\mathfrak mD=0$,   any finite set of $R$-module
generators of $D$ spans $D$ over $\Bbbk$. Thus $D$ is
finite-dimensional over $\Bbbk$.

Let $H:\mathcal E\to\modu D$ be a faithful additive
$R$-linear functor,   where $\mathcal E$ is Hom-finite.
For $r\in\mathfrak m$ and a morphism $f$ in $\mathcal E$,
we have $H(rf)=rH(f)=0$.
Faithfulness gives $rf=0$. Thus each morphism space of
$\mathcal E$ is annihilated by $\mathfrak m$ and, since it
has finite $R$-length, is finite-dimensional over $\Bbbk$.
Consequently, $\mathcal E$ is a Hom-finite $\Bbbk$-linear
category, and $H$ is a $\Bbbk$-linear functor.

\begin{definition}\label{def:map-form}
Let $\x,\y$ be nonzero representation-finite
Hom-finite Krull--Schmidt $\Bbbk$-linear categories, and let
$G:\x\to\modu D$ and $F:\y\to\modu D$
be faithful additive $\Bbbk$-linear functors.
\begin{enumerate}
\item[\textup{(1)}] For $X\in\x$ and $Y\in\y$, we define $q(X,Y)$ by
\begin{equation}\label{eq:map-form}
 q(X,Y)=\dim_{\Bbbk}\End_{\x}(X)
       +\dim_{\Bbbk}\End_{\y}(Y)
       -\dim_{\Bbbk}\Hom_D(G(X),F(Y)).
\end{equation}
\item[\textup{(2)}] The form $q$ is \emph{weakly positive} if
$q(X,Y)>0$ whenever $X$ and $Y$ are not both zero.
\end{enumerate}
Choose representatives $X_1,\ldots,X_r$ and $Y_1,\ldots,Y_s$
of the isomorphism classes of indecomposable objects in
$\x$ and $\y$, respectively. We write
$$
X=\bigoplus_{i=1}^{r}X_i^{n_i},\qquad
Y=\bigoplus_{j=1}^{s}Y_j^{m_j},
\qquad n_i,m_j\ge0.
$$
Then $q$ is an integral quadratic form in $(\mathbf n,\mathbf m)$.
\end{definition}

\begin{proposition}\label{prop:map-positivity}
Let $\x,\y,G,F,q$ be as in
Definition~\ref{def:map-form}.   Suppose that any two
indecomposable objects of $\m(G,F)$ with isomorphic
components are isomorphic. Then the following hold.
\begin{enumerate}
\item[\textup{(1)}] For any indecomposable objects
$X_1\in\x$ and $Y_1\in\y$, the rings
$\End_{\x}(X_1)$ and $\End_{\y}(Y_1)$
are division rings.
\item[\textup{(2)}] The quadratic form $q$ is weakly positive.
\end{enumerate}
\end{proposition}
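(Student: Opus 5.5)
The plan is to turn the hypothesis into a transitivity statement for a group action and then use Lemma~\ref{lem:transitive} for part (1) and an orbit-counting argument for part (2). For $X\in\x$ and $Y\in\y$ put
$$
W(X,Y)=\Hom_D(G(X),F(Y)).
$$
The group $\operatorname{Aut}(X)\times\operatorname{Aut}(Y)$ acts on $W(X,Y)$ by $(a,b)\cdot u=F(b)\circ u\circ G(a)^{-1}$. By Remark~\ref{rem:map-category-properties}, two objects $(X,Y,u)$ and $(X,Y,u')$ are isomorphic exactly when $u$ and $u'$ lie in the same orbit. More precisely, $W(X,Y)$ is an $\End_{\y}(Y)$--$\End_{\x}(X)$-bimodule via $b\cdot u\cdot a=F(b)\,u\,G(a)$, and the action above is the one in Lemma~\ref{lem:transitive}, with the roles of the two rings interchanged.

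For (1), take indecomposable objects $X_1\in\x$ and $Y_1\in\y$. Since $G$ and $F$ are faithful, we have $G(X_1)\neq0$ and $F(Y_1)\neq0$, so $W=W(X_1,Y_1)\neq0$ because $D$ is a division ring. For every $0\neq u\in W$, the object $(X_1,Y_1,u)$ is indecomposable by Remark~\ref{rem:map-category-properties}. By hypothesis all these objects are isomorphic, so the unit groups act transitively on $W\setminus\{0\}$. Lemma~\ref{lem:transitive} then gives $W\,J(\End_{\x}(X_1))=0$, that is, $u\circ G(j)=0$ for every $u\in W$ and every $j\in J(\End_{\x}(X_1))$. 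Since $W$ contains all $D$-linear maps $G(X_1)\to F(Y_1)$ and $F(Y_1)\neq0$, this forces $G(j)=0$, and faithfulness gives $j=0$. Thus $\End_{\x}(X_1)$ is a division ring. The symmetric argument applies to $\End_{\y}(Y_1)$; here we use that both categories are nonzero, so partners $X_1$ and $Y_1$ exist.

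For (2), first suppose one of $X,Y$ is zero, say $Y=0$ and $X\neq0$. Then $q(X,0)=\dim_{\Bbbk}\End_{\x}(X)>0$. Now assume $X\neq0$ and $Y\neq0$. Every object $(X,Y,u)$ is a finite direct sum of indecomposables $(X',Y',u')$ with $X'$ a summand of $X$ and $Y'$ a summand of $Y$. By hypothesis each such indecomposable is determined by the pair $(X',Y')$, and only finitely many such pairs occur. Hence the action on $W(X,Y)$ has finitely many orbits, bounded by a number $N$ depending only on $(\mathbf n,\mathbf m)$. The diagonal scalars $(\lambda,\lambda)$ with $\lambda\in\Bbbk^\times$ act trivially. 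So each orbit has size at most $|\operatorname{Aut}X||\operatorname{Aut}Y|/|\Bbbk^\times|$, and geometrically its dimension is at most $\dim\End X+\dim\End Y-1$. If $\Bbbk$ is infinite, suppose $q(X,Y)\le0$, so that $\dim W\ge\dim\End X+\dim\End Y$. Then $W$ would be a finite union of orbits of smaller dimension. This is impossible for the affine space $W$, since a finite union of proper constructible subsets of dimension less than $\dim W$ cannot cover it. Hence $q(X,Y)>0$.

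The main obstacle I expect is the case of a finite field $\Bbbk$ with $q$ elements, and the precise sense in which the automorphism groups and orbits are algebraic. Point counting gives only $q^{\dim W}\le N\,q^{a+b}/(q-1)$, where $a=\dim\End X$ and $b=\dim\End Y$. This is not a contradiction for a fixed $q$ unless $N$ is small. My first attempt would be to sharpen the count using part (1). Since the endomorphism rings of indecomposables are division rings, the stabilizer of $u$ is $\operatorname{Aut}(X,Y,u)$, and its size can be computed from the decomposition of $(X,Y,u)$ into indecomposables, whose endomorphism rings are again large. If the counting stays too coarse, I would instead argue by a minimal counterexample in $(\mathbf n,\mathbf m)$. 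The aim there would be a Tits/Drozd-type statement: a non-weakly-positive minimal vector carries a sincere indecomposable $(X,Y,u)$ whose stabilizer is small. Then the orbit of $u$ has size close to $|\operatorname{Aut}X||\operatorname{Aut}Y|/(q-1)$, and comparing it with the complement, which consists of decomposables, yields a second non-isomorphic indecomposable with the same components, contradicting the hypothesis.
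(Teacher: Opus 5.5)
Your part (1) is the paper's argument. For infinite $\Bbbk$ your dimension count is also the paper's route. The paper makes it rigorous by normalizing one coordinate of $g$ to $1$, which is your observation that the scalars act trivially. Each orbit then lies in a finite union of images of degree-$2$ polynomial maps in $c-1<w$ variables, hence in the zero set of a nonzero polynomial, and finitely many such zero sets cannot cover $\Bbbk^{w}$.

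The genuine gap is the finite-field case of (2), which your proposal leaves open. Write $|\Bbbk|=p^n$. The orbit count on $W=\Hom_D(G(X),F(Y))$ gives only $p^{n(w-c)}(p^n-1)\le(\text{number of orbits})$, i.e.\ at least $p^n-1$ orbits when $w\ge c$. That is no contradiction, and for $\Bbbk=\mathbb F_2$ it says nothing. You do not explain how sharper stabilizer computations would change this. The minimal-counterexample route presupposes a Tits--Drozd-type statement: a sincere indecomposable with small stabilizer over a finite field. That statement is exactly the hard part, and you give no argument for it. At a fixed field you are comparing two fixed finite numbers, so some parameter has to be sent to infinity.

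The paper does this without extending scalars in $\x$ and $\y$, which would not obviously preserve the hypothesis. It lets $\Gamma_h=\operatorname{GL}_h(E_1)\times\operatorname{GL}_h(E_2)$ act on $M_h(W)$, so that $u\in M_h(W)$ corresponds to $Z_u=(X^{\oplus h},Y^{\oplus h},u)$. It then restricts to the set $\Omega_h$ of those $u$ for which $Z_u$ has an endomorphism annihilated by a monic irreducible $\phi\in\Bbbk[T]$ of degree $h$.

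\emph{Upper bound.} Apply Lemma~\ref{lem:irreducible-endomorphism} to the faithful functor $(X',Y',v)\mapsto G(X')\oplus F(Y')$. Such a $Z_u$ has at most $N=\dim_\Bbbk G(X)+\dim_\Bbbk F(Y)$ pairwise nonisomorphic indecomposable summands. The hypothesis leaves at most $(h\delta+1)^{r+s}$ candidate indecomposables. Hence the number $\beta_h$ of orbits in $\Omega_h$ is bounded by a polynomial in $h$.

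\emph{Lower bound.} The companion matrix $M_\phi$ gives $g_\phi=(M_\phi,M_\phi)\in\Gamma_h$, and every fixed point of $g_\phi$ lies in $\Omega_h$. Let $d_\phi$ be the dimension of the centralizer of $M_\phi$ in $M_h(\Bbbk)$. Then $|\Omega_h^{g_\phi}|=p^{nd_\phi w}$ and $|C_{\Gamma_h}(g_\phi)|\le p^{nd_\phi c}$, and distinct $\phi$ give nonconjugate $g_\phi$. If $w\ge c$, the Cauchy--Frobenius formula yields $\beta_h\ge(p^{nh}-p^n)/h$, which is exponential in $h$. This contradicts the polynomial upper bound as $h$ runs through the primes.

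This growth-in-$h$ device, counting effectively over $\mathbb F_{p^{nh}}$ via commuting companion matrices, is the idea your proposal is missing.
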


\begin{proof}
\textup{(1)} Fix indecomposable objects
$X_1\in\x$ and $Y_1\in\y$, and set
$$
\begin{gathered}
E_X=\End_{\x}(X_1),\qquad
E_Y=\End_{\y}(Y_1),\\
U=G(X_1),\qquad V=F(Y_1),\qquad W=\Hom_D(U,V).
\end{gathered}
$$
Since $G$ and $F$ are faithful, $U,V$ are nonzero, so $W\ne0$.
For nonzero $u,v\in W$, the objects $(X_1,Y_1,u)$ and
$(X_1,Y_1,v)$ are indecomposable by
Remark~\ref{rem:map-category-properties}. They have the same
components, so the hypothesis gives an isomorphism
$(f,g):(X_1,Y_1,u)\to(X_1,Y_1,v)$, with
$f\in E_X^\times$ and $g\in E_Y^\times$. Then we have
$$
F(g)\circ u=v\circ G(f),\qquad
v=F(g)\circ u\circ G(f)^{-1}.
$$
Therefore $E_Y^\times\times E_X^\times$ acts transitively on
$W\setminus\{0\}$ by postcomposition and inverse precomposition.
Since $E_X,E_Y$ are finite-dimensional local $\Bbbk$-algebras,
Lemma~\ref{lem:transitive} gives
$$
J(E_Y)W=0=WJ(E_X).
$$

Let $f\in J(E_X)$. Then $w\circ G(f)=0$ for every $w\in W$.
If $G(f)\ne0$, choose $x\in U$ with $z=G(f)(x)\ne0$.
Extend $z$ to a $D$-basis $z,z_2,\ldots,z_t$ of $U$.
Choose $0\ne y\in V$ and define $w:U\to V$ $D$-linearly by
$w(z)=y$ and $w(z_i)=0$ for $i\ge2$.
Then $(w\circ G(f))(x)=y\ne0$, a contradiction.
Thus $G(f)=0$, and faithfulness of $G$ gives $f=0$.
Hence $J(E_X)=0$. Similarly, $J(E_Y)=0$.
Since $\End_{\x}(X_1)$ and $\End_{\y}(Y_1)$
are local rings with zero radical, they are division rings.

\textup{(2)} The inequality $q(X,Y)>0$ is immediate when
exactly one of $X$ and $Y$ is zero. We may therefore assume
that $X$ and $Y$ are nonzero and write
$$
X=\bigoplus_{i=1}^{r}X_i^{n_i},
\qquad
Y=\bigoplus_{j=1}^{s}Y_j^{m_j}.
$$
Set
$$
\begin{gathered}
	E_1=\End_{\y}(Y),\qquad E_2=\End_{\x}(X),
	\qquad W=\Hom_D(G(X),F(Y)),\\
	c=\dim_{\Bbbk}E_1+\dim_{\Bbbk}E_2,
	\qquad w=\dim_{\Bbbk}W.
\end{gathered}
$$
The space $W$ is an $E_1$--$E_2$-bimodule by postcomposition
and precomposition, and $q(X,Y)=c-w$. Suppose that $c\le w$.

Set $\m=\m(G,F)$ and
$$
\delta=\sum_{i=1}^{r}n_i+\sum_{j=1}^{s}m_j,
\qquad
N=\dim_{\Bbbk}G(X)+\dim_{\Bbbk}F(Y).
$$

For each positive integer $h$, the group
$\Gamma_h=\operatorname{GL}_h(E_1)\times
\operatorname{GL}_h(E_2)$ acts on the space $M_h(W)$
of $h\times h$ matrices with entries in $W$ by
$(g_1,g_2)\cdot u=g_1ug_2^{-1}$.
For $u\in M_h(W)$, write
$Z_u=(X^{\oplus h},Y^{\oplus h},u)\in\m$.
By the definition of morphisms in $\m$,
we have $Z_u\cong Z_v$ if and only if
$u$ and $v$ belong to the same $\Gamma_h$-orbit.

Let $\Omega_h$ be the union of those $\Gamma_h$-orbits
corresponding to objects $Z_u$ admitting an endomorphism
annihilated by some monic irreducible polynomial
$\phi\in\Bbbk[T]$ of degree $h$.
Let $\beta_h$ be the number of $\Gamma_h$-orbits
in $\Omega_h$, equivalently, the number of
isomorphism classes of objects $Z_u$ with
$u\in\Omega_h$.

For $u\in\Omega_h$, let $Z'=(X',Y',v)$ be an
indecomposable direct summand of $Z_u$.
Then
$$
X'\cong\bigoplus_{i=1}^{r}X_i^{n_i'},
\qquad
Y'\cong\bigoplus_{j=1}^{s}Y_j^{m_j'},
\qquad
0\le n_i'\le hn_i,\quad 0\le m_j'\le hm_j.
$$
By the hypothesis, each pair of multiplicity vectors
gives at most one isomorphism class of indecomposable
objects. Let $U_1,\ldots,U_t$ be pairwise nonisomorphic
representatives of the indecomposable direct summands
of $Z_u$, as $u$ ranges over $\Omega_h$. Then
$$
t\le
\prod_{i=1}^{r}(hn_i+1)\prod_{j=1}^{s}(hm_j+1)
\le(h\delta+1)^{r+s}.
$$

By Krull--Schmidt, for each $u\in\Omega_h$ we have
$$
Z_u\cong\bigoplus_{\ell=1}^{t}U_\ell^{a_\ell}
$$
with $0\le a_\ell\le h\delta$.

Consider the functor $H:\m\to\modu\Bbbk$
defined by $H(X',Y',v)=G(X')\oplus F(Y')$
and $H(f,g)=G(f)\oplus F(g)$.
This functor is faithful and $\Bbbk$-linear.
For $u\in\Omega_h$, we have $\dim_{\Bbbk}H(Z_u)=hN$.
By Lemma~\ref{lem:irreducible-endomorphism},
$Z_u$ has at most $N$ pairwise nonisomorphic
indecomposable direct summands. 
Hence at most $N$ of $a_1,\ldots,a_t$ are nonzero.
Thus the isomorphism class of $Z_u$ is determined by
at most $N$ pairs $(U_\ell,a_\ell)$, where
$1\le\ell\le t$ and $1\le a_\ell\le h\delta$.
Consequently,
\begin{equation}\label{eq:orbit-polynomial-bound}
	\beta_h\le(1+th\delta)^N
	\le(h\delta+1)^{N(r+s+1)}.
\end{equation}

\medskip\noindent\emph{Case (1). $\Bbbk$ is an infinite field.}
We now take $h=1$.
Since the zero endomorphism of every $Z_u$ is annihilated
by the monic irreducible polynomial $T$, we have
$\Omega_1=W$. Thus the action of
$E_1^\times\times E_2^\times$ on $W$ has finitely many orbits.
We now show that each orbit is contained in the zero set
of a nonzero polynomial on $W$.

Let $a=\dim_{\Bbbk}E_1$ and $b=\dim_{\Bbbk}E_2$.
Then $a+b=c$.
Choose $\Bbbk$-bases $e_1,\ldots,e_a$ of $E_1$,
$d_1,\ldots,d_b$ of $E_2$, and $f_1,\ldots,f_w$ of $W$.
Fix $u\in W$, and denote its orbit by $\mathcal O(u)$.
We have
$$
\mathcal O(u)=
\{F(g)\circ u\circ G(f):
g\in E_1^\times,\ f\in E_2^\times\}.
$$
For $g=\sum_{i=1}^{a}s_i e_i$ and
$f=\sum_{j=1}^{b}t_jd_j$, $\Bbbk$-linearity gives
$$
F(g)\circ u\circ G(f)
=\sum_{i=1}^{a}\sum_{j=1}^{b}
s_it_j\,F(e_i)\circ u\circ G(d_j).
$$
Here $u$ remains fixed, so the vectors
$F(e_i)\circ u\circ G(d_j)$ are fixed as $g,f$ vary.

Since $g$ is invertible, some coordinate $s_p$ is nonzero.
Replace $g,f$ by
$$
g'=s_p^{-1}g
=e_p+\sum_{\substack{i=1\\i\ne p}}^{a}
\frac{s_i}{s_p}e_i,
\qquad
f'=s_pf=\sum_{j=1}^{b}s_pt_jd_j.
$$
These are again units, and
$$
F(g')\circ u\circ G(f')
=s_p^{-1}s_p\,F(g)\circ u\circ G(f)
=F(g)\circ u\circ G(f).
$$
Thus the same orbit point has a representation in which
the coefficient of $e_p$ in $g'$ is $1$.
Once $p$ is chosen, the remaining coefficients
$x_i=s_i/s_p$ for $i\ne p$ and $y_j=s_pt_j$ still vary.
There are $(a-1)+b=c-1$ such variables.

For each $p\in\{1,\ldots,a\}$, define
$$
\begin{aligned}
	\Phi_{u,p}:\Bbbk^{a-1}\times\Bbbk^b
	&\longrightarrow W,\\
	(\mathbf x,\mathbf y)
	&\longmapsto
	F\left(e_p+\sum_{\substack{i=1\\i\ne p}}^{a}x_i e_i\right)
	\circ u\circ
	G\left(\sum_{j=1}^{b}y_jd_j\right),
\end{aligned}
$$
where $\mathbf x=(x_i)_{i\ne p}$ and
$\mathbf y=(y_1,\ldots,y_b)$.
The index $p$ specifies which coordinate is set to $1$.
Since every invertible $g$ has a nonzero coordinate,
the preceding normalization shows that
$$
\mathcal O(u)\subseteq
\bigcup_{p=1}^{a}\im\Phi_{u,p}.
$$

Expanding in the basis $f_1,\ldots,f_w$ of $W$ gives
$$
F(e_i)\circ u\circ G(d_j)
=\sum_{\ell=1}^{w}\gamma_{\ell ij}f_\ell,
\qquad \gamma_{\ell ij}\in\Bbbk.
$$
For a fixed $p$, the coordinates of $\Phi_{u,p}$ in the
chosen basis of $W$ are therefore given by
$$
\begin{aligned}
	\Phi_{u,p}(\mathbf x,\mathbf y)
	&=\sum_{\ell=1}^{w}r_\ell(\mathbf x,\mathbf y)f_\ell,\\
	r_\ell(\mathbf x,\mathbf y)
	&=\sum_{j=1}^{b}\gamma_{\ell pj}y_j
	+\sum_{\substack{i=1\\i\ne p}}^{a}\sum_{j=1}^{b}
	\gamma_{\ell ij}x_i y_j.
\end{aligned}
$$
Each $r_\ell$ is a polynomial of total degree at most $2$
in the $c-1$ remaining variables.
Since $c-1<w$, these $w$ polynomials are algebraically
dependent by \cite[VIII, Theorem~1.1]{Lang}.
Hence there is a nonzero polynomial
$P_{u,p}\in\Bbbk[T_1,\ldots,T_w]$ such that
$$
P_{u,p}\bigl(r_1(\mathbf x,\mathbf y),\ldots,
r_w(\mathbf x,\mathbf y)\bigr)=0.
$$
Using the chosen basis of $W$, we regard $P_{u,p}$ as a
polynomial function on $W$. Thus it vanishes on
$\im\Phi_{u,p}$.
Every point of $\mathcal O(u)$ lies in at least one of
these images, so $P_u=\prod_{p=1}^{a}P_{u,p}$ vanishes
on $\mathcal O(u)$.
Since the polynomial ring is a domain, $P_u\ne0$.

Choose representatives $u_1,\ldots,u_{\beta_1}$ of the
finitely many orbits. The product
$P_{u_1}\cdots P_{u_{\beta_1}}$ is nonzero and vanishes
at every point of $W\cong\Bbbk^w$.
This contradicts \cite[IV, Corollary~1.7]{Lang}.

\medskip\noindent\emph{Case (2). $\Bbbk$ is a finite field.}
Write $\Bbbk=\mathbb F_{p^n}$, where $p$ is prime
and $n\ge1$.

We now assume that $h$ is prime. Let $\mathcal I_h$
be the set of monic irreducible polynomials of degree
$h$ over $\Bbbk$. By \cite[Chapter~V, Exercise~22]{Lang},
we have $|\mathcal I_h|=(p^{nh}-p^n)/h$.

For $\phi\in\mathcal I_h$, let $M_\phi\in M_h(\Bbbk)$
be its companion matrix. Since $h>1$, we have
$\phi(0)\ne0$, so $M_\phi$ is invertible.
Via the central embeddings of $\Bbbk$ in $E_1,E_2$, set
$$
g_\phi=(M_\phi,M_\phi)\in\Gamma_h.
$$
The minimal polynomial of $M_\phi$ over $\Bbbk$ is $\phi$.

If $u\in M_h(W)$ is fixed by $g_\phi$, then
$M_\phi u=uM_\phi$, so $(M_\phi,M_\phi)$ is an
endomorphism of $Z_u$. Since $\phi(M_\phi)=0$,
this endomorphism is annihilated by $\phi$.
Thus, by the definition of $\Omega_h$, every fixed point
of $g_\phi$ on $M_h(W)$ belongs to $\Omega_h$.
For $g\in\Gamma_h$, write $\Omega_h^g$ for the fixed-point
set of $g$ in $\Omega_h$ and $C_{\Gamma_h}(g)$ for its
centralizer in $\Gamma_h$.

We now compute $|\Omega_h^{g_\phi}|$.
Fix $\phi\in\mathcal I_h$ and set
$$
C_\phi=\{A\in M_h(\Bbbk):M_\phi A=AM_\phi\},
\qquad d_\phi=\dim_{\Bbbk}C_\phi.
$$
Choose a $\Bbbk$-basis $f_1,\ldots,f_w$ of $W$.
Expanding each entry of $u=(u_{ij})\in M_h(W)$ in this basis
gives the unique expression
$$
u=\sum_{\ell=1}^{w}A_\ell f_\ell,\qquad
A_\ell=(a_{ij}^{(\ell)})\in M_h(\Bbbk),\qquad
u_{ij}=\sum_{\ell=1}^{w}a_{ij}^{(\ell)}f_\ell,
$$
where $A_\ell f_\ell$ denotes the matrix
$(a_{ij}^{(\ell)}f_\ell)_{ij}$.
Since the two scalar actions of $\Bbbk$ on $W$ agree,
$$
g_\phi\cdot u=u
\quad\Longleftrightarrow\quad M_\phi u=uM_\phi
\quad\Longleftrightarrow\quad
A_\ell\in C_\phi\quad(1\le\ell\le w).
$$
Since every fixed point of $g_\phi$ on $M_h(W)$
belongs to $\Omega_h$, the map
$$
C_\phi^{\oplus w}\longrightarrow\Omega_h^{g_\phi},
\qquad
(A_1,\ldots,A_w)\longmapsto
\sum_{\ell=1}^{w}A_\ell f_\ell
$$
is a $\Bbbk$-linear isomorphism. Hence
$\dim_{\Bbbk}\Omega_h^{g_\phi}=d_\phi w$.

For $i=1,2$, let
$$
C_{\phi,i}=\{B\in M_h(E_i):M_\phi B=BM_\phi\}.
$$
Similarly, $\dim_{\Bbbk}C_{\phi,i}
=d_\phi\dim_{\Bbbk}E_i$.
A pair $(g_1,g_2)\in\Gamma_h$ commutes with $g_\phi$
if and only if $g_iM_\phi=M_\phi g_i$ for $i=1,2$.
Hence
$$
C_{\Gamma_h}(g_\phi)
=\bigl(C_{\phi,1}\cap\operatorname{GL}_h(E_1)\bigr)
\times\bigl(C_{\phi,2}\cap\operatorname{GL}_h(E_2)\bigr).
$$
Since $|\Bbbk|=p^n$ and
$\dim_{\Bbbk}E_1+\dim_{\Bbbk}E_2=c$, we obtain
$$
|\Omega_h^{g_\phi}|=p^{nd_\phi w},
\qquad
|C_{\Gamma_h}(g_\phi)|
\le |C_{\phi,1}|\,|C_{\phi,2}|=p^{nd_\phi c}.
$$
Since $w\ge c$, we have
$$
\frac{|\Omega_h^{g_\phi}|}{|C_{\Gamma_h}(g_\phi)|}
\ge p^{nd_\phi(w-c)}\ge1.
$$

If $g_\phi$ and $g_\psi$ are conjugate in $\Gamma_h$,
then $M_\phi$ and $M_\psi$ are conjugate in
$\operatorname{GL}_h(E_1)$. Since conjugation preserves
their minimal polynomials over $\Bbbk$, we have
$\phi=\psi$.

Consider the action of $\Gamma_h$ on $\Omega_h$.
By the Cauchy--Frobenius formula
\cite[Theorem~3.22]{Rotman}, we have
$$
\beta_h
=\sum_{[g]}\frac{|\Omega_h^g|}{|C_{\Gamma_h}(g)|}
\ge\sum_{\phi\in\mathcal I_h}
\frac{|\Omega_h^{g_\phi}|}{|C_{\Gamma_h}(g_\phi)|}
\ge|\mathcal I_h|=\frac{p^{nh}-p^n}{h},
$$
where the first sum is over the conjugacy classes of
$\Gamma_h$.
Together with \eqref{eq:orbit-polynomial-bound}, this yields
$$
\frac{p^{nh}-p^n}{h}
\le(h\delta+1)^{N(r+s+1)},
$$
which is impossible as $h$ tends to infinity through
the primes.

This contradiction proves that $c>w$ in both cases.
\end{proof}

\subsection{From positivity to finite type}
We now use Simson's criterion for finite prinjective type.
We first recall it for triangular matrix algebras and then
realize $\m(G,F)$ as a prinjective category.

For division $\Bbbk$-algebras $D_1,D_2$ and a finite-dimensional
$D_1$--$D_2$-bimodule $V$, we write $\dim_{D_1}V$ for its
dimension as a left $D_1$-vector space and $\dim V_{D_2}$ for
its dimension as a right $D_2$-vector space.

Let $\Lambda_1,\Lambda_2$ be nonzero basic finite-dimensional
$\Bbbk$-algebras, and let $N$ be a finite-dimensional
$\Lambda_1$--$\Lambda_2$-bimodule.
Consider the triangular matrix algebra
$$
\Gamma=\begin{pmatrix}\Lambda_1&N\\0&\Lambda_2\end{pmatrix}.
$$
Choose complete sets of primitive orthogonal idempotents
$e_1,\ldots,e_r$ of $\Lambda_1$ and
$\eta_1,\ldots,\eta_s$ of $\Lambda_2$.
In this subsection, unless explicitly stated otherwise, modules
over $\Lambda_1$, $\Lambda_2$ and $\Gamma$ are right modules.

Following \cite[p.~96]{Simson}, we write $\operatorname{prin}(\Gamma)$
for Simson's category $\operatorname{prin}(\Gamma)^{\Lambda_1}_{\Lambda_2}$.
This is the full subcategory of finitely generated right
$\Gamma$-modules represented by triples
$(P,I,\varphi)$, where $P$ is a projective right $\Lambda_1$-module,
$I$ is an injective right $\Lambda_2$-module, and
$$
\varphi:P\otimes_{\Lambda_1}N\longrightarrow I
$$
is a homomorphism of right $\Lambda_2$-modules.

The corresponding right action on
$P\oplus I$ is
$$
(p,i)\begin{pmatrix}a&n\\0&b\end{pmatrix}
=\bigl(pa,\varphi(p\otimes n)+ib\bigr).
$$
A morphism $(P,I,\varphi)\to(P',I',\varphi')$ is a pair $(s,t)$
consisting of a homomorphism $s:P\to P'$ of right $\Lambda_1$-modules
and a homomorphism $t:I\to I'$ of right $\Lambda_2$-modules such that
$t\circ\varphi=\varphi'\circ(s\otimes1_N)$.
For $\mathbf n=(n_1,\ldots,n_r)\in\mathbb Z^r$ and
$\mathbf m=(m_1,\ldots,m_s)\in\mathbb Z^s$, define
\begin{equation}\label{eq:simson-form}
\begin{aligned}
 q_\Gamma(\mathbf n,\mathbf m)
 &=\sum_{i=1}^{r}\sum_{i'=1}^{r}\dim_{\Bbbk}(e_i\Lambda_1 e_{i'})\,n_i n_{i'}\\
 &\quad+\sum_{j=1}^{s}\sum_{j'=1}^{s}\dim_{\Bbbk}(\eta_j\Lambda_2\eta_{j'})\,m_j m_{j'}
\\
 &\quad-\sum_{i=1}^{r}\sum_{j=1}^{s}\dim_{\Bbbk}(e_iN\eta_j)\,n_i m_j.
\end{aligned}
\end{equation}
The form $q_\Gamma$ is \emph{weakly positive} if
$q_\Gamma(\mathbf n,\mathbf m)>0$ for every nonzero
$(\mathbf n,\mathbf m)\in\mathbb Z_{\ge0}^{r+s}$.

\begin{proposition}[{cf.\ \cite[Theorem~1.6]{Simson}}]\label{prop:simson-finite}
Assume that
\begin{enumerate}
\item[\textup{(i)}] $e_i\Lambda_1e_i$ and
$\eta_j\Lambda_2\eta_j$ are division rings for every $i,j$.
\item[\textup{(ii)}] The left $\Lambda_1$-module $N\eta_j$ and
the right $\Lambda_2$-module $e_iN$ are faithful for every $i,j$.
\end{enumerate}
If $q_\Gamma$ is weakly positive, then
$\operatorname{prin}(\Gamma)$ is representation-finite.
\end{proposition}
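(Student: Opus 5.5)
The plan is to obtain the statement directly from \cite[Theorem~1.6]{Simson}. The work consists in checking that our hypotheses and our form $q_\Gamma$ match Simson's after translating conventions. I will proceed in three steps.

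First, I will identify the objects of $\operatorname{prin}(\Gamma)$ and their coordinates. Every projective right $\Lambda_1$-module is of the form $P=\bigoplus_i (e_i\Lambda_1)^{n_i}$. Every injective right $\Lambda_2$-module is of the form $I=\bigoplus_j \mathbb D(\Lambda_2\eta_j)^{m_j}$. So $(\mathbf n,\mathbf m)$ is exactly the coordinate vector that Simson attaches to a prinjective module, namely its ``prinjective dimension vector''. Next I will compute the relevant Hom spaces:
$$
\dim_{\Bbbk}\Hom_{\Lambda_1}(e_{i'}\Lambda_1,e_i\Lambda_1)=\dim_{\Bbbk}e_i\Lambda_1e_{i'},
$$
and dually
$$
\dim_{\Bbbk}\Hom_{\Lambda_2}\bigl(\mathbb D(\Lambda_2\eta_{j'}),\mathbb D(\Lambda_2\eta_j)\bigr)=\dim_{\Bbbk}\eta_{j'}\Lambda_2\eta_j.
$$
For the cross terms,
$$
\Hom_{\Lambda_2}\bigl(e_i\Lambda_1\otimes_{\Lambda_1}N,\mathbb D(\Lambda_2\eta_j)\bigr)\cong\mathbb D(e_iN\eta_j),
$$
which has $\Bbbk$-dimension $\dim_{\Bbbk}e_iN\eta_j$. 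With these identifications, $q_\Gamma$ in \eqref{eq:simson-form} is precisely the quadratic form
$$
\dim\End(P)+\dim\End(I)-\dim\Hom(P\otimes N,I),
$$
which is the form appearing in Simson's criterion, written in $\Bbbk$-dimensions.

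Second, I will compare normalizations. Simson typically states his form in coordinates weighted by the division rings $F_i=e_i\Lambda_1e_i$ and $G_j=\eta_j\Lambda_2\eta_j$, dividing bimodule dimensions by $\dim_{\Bbbk}F_i$ and similar quantities. Hypothesis (i) guarantees that these corners are division rings, so the bimodules $e_i\Lambda_1e_{i'}$ and $e_iN\eta_j$ are free on each side over these division rings. Simson's form is then obtained from ours by the positive rescaling $n_i\mapsto n_i$ together with multiplication of each coefficient by positive constants. More precisely, $q_\Gamma$ equals Simson's form evaluated on the same integer vector and multiplied termwise by the positive numbers $\dim_{\Bbbk}F_i$ and $\dim_{\Bbbk}G_j$ in a way compatible with bilinearity. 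I need to check carefully that this comparison preserves weak positivity on $\mathbb Z_{\ge0}^{r+s}$. If the rescaling turns out not to be uniform, I will instead use the intrinsic description
$$
q_\Gamma(\mathbf n,\mathbf m)=\dim_{\Bbbk}\End(P)+\dim_{\Bbbk}\End(I)-\dim_{\Bbbk}\Hom(P\otimes N,I).
$$
This description is coordinate-free, and Simson's proof works with exactly this expression.

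Third, I will verify Simson's standing assumptions. His Theorem~1.6 requires the bimodule to be faithful on the relevant vertices, equivalently that no simple projective or simple injective splits off trivially. Hypothesis (ii) says that $N\eta_j$ is faithful as a left $\Lambda_1$-module and $e_iN$ is faithful as a right $\Lambda_2$-module, which is the required condition. Once this is checked, the implication ``weakly positive $\Rightarrow$ finite prinjective type'' is exactly Simson's statement. The main obstacle is therefore bookkeeping: reconciling left/right module conventions and dualities, and confirming that Simson's faithfulness hypothesis is the one encoded in (ii) and not a stronger one, such as sincerity.
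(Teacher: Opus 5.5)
Your strategy matches the paper's: apply the implication (c)$\Rightarrow$(a) of \cite[Theorem~1.6]{Simson} once the hypotheses and forms are matched. Since that matching is the entire proof, two omissions matter.

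First, Simson's theorem concerns basic Artinian bipartite piecewise peak \emph{PI-rings}, and you never verify the Artinian and PI conditions. They are immediate but must be stated. The ring $\Gamma$ is finite-dimensional over $\Bbbk$. The standard polynomial $s_{d+1}$, with $d=\dim_{\Bbbk}\Gamma$, vanishes on $\Gamma$ because its evaluation is an alternating $(d+1)$-linear map on a $d$-dimensional space. Moreover, (i) is not a normalization device: (i) and (ii) together are precisely Simson's definition of piecewise peak \cite[p.~97]{Simson}. Faithfulness in (ii) is also much stronger than ``no simple projective or injective splits off'', so drop that gloss.

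Second, your Step~2 must be replaced rather than checked, since no rescaling occurs. Let $f_\ell=\dim_{\Bbbk}\varepsilon_\ell\Gamma\varepsilon_\ell$, and let $d_{\ell m}$ be the right dimension of $\varepsilon_\ell\Gamma\varepsilon_m$ over the division ring $\varepsilon_m\Gamma\varepsilon_m$. Hypothesis (i) then gives $\dim_{\Bbbk}(\varepsilon_\ell\Gamma\varepsilon_m)=f_md_{\ell m}=f_\ell d'_{\ell m}$. Hence \eqref{eq:simson-form} has diagonal weights $f_\ell$ and off-diagonal weights $f_md_{\ell m}$. The signs are positive on the blocks of $\Lambda_1,\Lambda_2$ and negative on those of $N$. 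This is Simson's weighted form (1.7) once a misprint there is corrected. The two positive off-diagonal sums of the printed (1.7) carry $f_id_{ij}$ and $f_pd_{pq}$, which must read $f_jd_{ij}$ and $f_qd_{pq}$. This correction is consistent with Simson's Euler-form comparison on p.~129; see Remark~\ref{rem:simson-conventions}. Against the printed formula the coefficients really would differ non-uniformly when $f_i\ne f_j$, so your worry was justified. The resolution, however, is correcting the source, not rescaling.

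The mechanism you sketch, multiplying different terms by different positive constants, does not preserve weak positivity. For example, $x_1^2+x_2^2-x_1x_2$ is weakly positive, whereas $x_1^2+4x_2^2-4x_1x_2=(x_1-2x_2)^2$ vanishes at $(2,1)$. Your fallback, that Simson's proof ``works with exactly this expression'', is an unverified claim about the source.

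Your Step~1 Hom computations are fine; the indices in the second are swapped, which is harmless in the symmetric sum. But identifying that Euler-type expression with Simson's form is exactly what needed proof.
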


\begin{proof}
Conditions \textup{(i)} and \textup{(ii)} say precisely that
$\Gamma$ is piecewise peak in the sense of \cite[p.~97]{Simson}.
The algebra $\Gamma$ is Artinian because it is finite-dimensional
over $\Bbbk$. It also satisfies a polynomial identity: if
$d=\dim_{\Bbbk}\Gamma$, the nonzero noncommutative polynomial
$$
s_{d+1}(x_1,\ldots,x_{d+1})
=\sum_{\sigma\in\mathfrak S_{d+1}}\operatorname{sgn}(\sigma)
x_{\sigma(1)}\cdots x_{\sigma(d+1)}
$$
vanishes on $\Gamma$ because its evaluation is an alternating
$(d+1)$-linear map on a $d$-dimensional vector space.
Thus $\Gamma$ is a basic Artinian bipartite piecewise peak
PI-ring, as required in \cite[Theorem~1.6]{Simson}.

Write
$(\varepsilon_1,\ldots,\varepsilon_{r+s})
=(e_1,\ldots,e_r,\eta_1,\ldots,\eta_s)$ and set
$f_\ell=\dim_{\Bbbk}(\varepsilon_\ell\Gamma\varepsilon_\ell)$.
For $\ell\ne m$, let
$$
d_{\ell m}=\dim(\varepsilon_\ell\Gamma\varepsilon_m)
_{\varepsilon_m\Gamma\varepsilon_m},
\qquad
d'_{\ell m}=\dim_{\varepsilon_\ell\Gamma\varepsilon_\ell}
(\varepsilon_\ell\Gamma\varepsilon_m).
$$
Then
$$
\dim_{\Bbbk}(\varepsilon_\ell\Gamma\varepsilon_m)
=f_m d_{\ell m}=f_\ell d'_{\ell m}.
$$
In particular, $f_m d_{\ell m}=f_\ell d'_{\ell m}$ is the
balancing relation for the valuation used by Simson.
In his notation, $\ell\prec m$ means that $\ell\ne m$ and
$\varepsilon_\ell\Gamma\varepsilon_m\ne0$.
Consequently, \eqref{eq:simson-form} has diagonal weights
$f_\ell$ and off-diagonal weights $f_m d_{\ell m}$,
with positive signs for blocks of $\Lambda_1$ and $\Lambda_2$
and negative signs for blocks of $N$.
Thus \eqref{eq:simson-form} is precisely Simson's
weighted quadratic form (1.7), with the two positive
coefficients corrected as explained in
Remark~\ref{rem:simson-conventions}.
The implication $(c)\Rightarrow(a)$ in
\cite[Theorem~1.6]{Simson} proves the assertion.
\end{proof}
\begin{remark}\label{rem:simson-conventions}
Formula~(1.7) in \cite[p.~99]{Simson} contains a
typographical error in its two positive off-diagonal sums.
In Simson's notation \cite[(1.5), p.~98]{Simson},
$d_{ij}$ denotes the
\emph{right} dimension of $\varepsilon_i\Gamma\varepsilon_j$
over $\varepsilon_j\Gamma\varepsilon_j$, while $d'_{ij}$
is its left dimension over $\varepsilon_i\Gamma\varepsilon_i$.
Hence
$$
\dim_{\Bbbk}(\varepsilon_i\Gamma\varepsilon_j)
=f_jd_{ij}=f_id'_{ij}.
$$
The coefficients printed as $f_id_{ij}$ and $f_pd_{pq}$ in
the two positive sums of (1.7) must therefore be
$f_jd_{ij}$ and $f_qd_{pq}$, respectively.
These coefficients agree with the Euler-form comparison in
\cite[p.~129]{Simson}.
\end{remark}

\begin{lemma}\label{lem:simson-criterion}
Let $\x,\y,G,F,q$ be as in
Definition~\ref{def:map-form}, with indecomposable
representatives $X_1,\ldots,X_r$ and $Y_1,\ldots,Y_s$.
Assume that
\begin{enumerate}
\item[\textup{(1)}] $\End_{\x}(X_i)$ and
$\End_{\y}(Y_j)$ are division rings for every $i,j$.
\item[\textup{(2)}] The quadratic form $q$ is weakly positive.
\end{enumerate}
Then $\m(G,F)$ is representation-finite.
\end{lemma}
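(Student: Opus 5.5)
The plan is to realize $\m(G,F)$, up to a bijection on isomorphism classes preserving indecomposability, as Simson's category $\operatorname{prin}(\Gamma)$ for a suitable triangular matrix algebra. Then Proposition~\ref{prop:simson-finite} applies, with its form $q_\Gamma$ identified with $q$ from Definition~\ref{def:map-form}.

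\textbf{Setting up $\Gamma$.} Put $X_0=\bigoplus_i X_i$ and $Y_0=\bigoplus_j Y_j$. Since $\x$ is representation-finite and Krull--Schmidt, $X\mapsto\Hom_{\x}(X_0,X)$ gives an equivalence of $\x$ with the projective right modules over $\Lambda_1:=\End_{\x}(X_0)$. Dually, $Y\mapsto D_0\Hom_{\y}(Y,Y_0)$ identifies $\y$ with the injective right modules over $\Lambda_2:=\End_{\y}(Y_0)$, where $D_0=\Hom_\Bbbk(-,\Bbbk)$; alternatively one can use a left/right convention swap. Both algebras are basic, and hypothesis (1) says that $e_i\Lambda_1e_i=\End(X_i)$ and $\eta_j\Lambda_2\eta_j=\End(Y_j)$ are division rings. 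Take
\[
N=\Hom_D\bigl(G(X_0),F(Y_0)\bigr)
\]
as a bimodule via the actions of $\Lambda_1$ and $\Lambda_2$ through $G$ and $F$, with sides adjusted to the chosen conventions. Then $e_iN\eta_j\cong\Hom_D(G(X_i),F(Y_j))$. Under the identifications, a $D$-map $u:G(X)\to F(Y)$ corresponds to a $\Lambda_2$-homomorphism $P\otimes_{\Lambda_1}N\to I$, naturally in both variables. This uses the tensor-hom adjunction and $\Hom_{\Lambda_2}(-,D_0\Lambda_2$-type$)$ duality, and is checked on indecomposables, where it reduces to $\Hom_D(G(X_i),F(Y_j))$. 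The result is an equivalence between $\m(G,F)$ and $\operatorname{prin}(\Gamma)$.

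\textbf{Matching the forms.} Expanding $\End(X)$, $\End(Y)$ and $\Hom_D(G(X),F(Y))$ in multiplicities gives exactly
\[
\sum \dim e_i\Lambda_1e_{i'}\,n_in_{i'}+\sum\dim\eta_j\Lambda_2\eta_{j'}\,m_jm_{j'}-\sum\dim e_iN\eta_j\,n_im_j .
\]
So $q=q_\Gamma$, and hypothesis (2) is the required weak positivity.

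\textbf{Faithfulness, the main obstacle.} Condition (ii) of Proposition~\ref{prop:simson-finite} requires $N\eta_j$ to be faithful over $\Lambda_1$ and $e_iN$ faithful over $\Lambda_2$. Faithfulness of $G$ alone does not obviously give this, since a nonzero $G(f)$ must be detected by postcomposition with maps into the single object $F(Y_j)$. The argument runs as follows. If $a\in\Lambda_1$ satisfies $w\circ G(a)=0$ for all $w\in\Hom_D(G(X_0),F(Y_j))$, then, because $F(Y_j)\neq0$ and $G(X_0)$ is a free $D$-module, the basis-extension argument from the proof of Proposition~\ref{prop:map-positivity}(1) forces $G(a)=0$. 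Faithfulness of $G$ then gives $a=0$. The same argument applied to $F$ handles $e_iN$.

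I expect the real care to lie in getting the sidedness and duality conventions right for Simson's right-module $\operatorname{prin}$, so that the triple $(P,I,\varphi)$ really encodes $u$ functorially. If the injective side is awkward, the fallback is to replace $\y$ by $\y^{\op}$ and $F$ by $\Hom_D(F(-),D)$. Once the equivalence is established, Proposition~\ref{prop:simson-finite} yields that $\operatorname{prin}(\Gamma)$, and hence $\m(G,F)$, is representation-finite.
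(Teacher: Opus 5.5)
Your route is the paper's: projectivize $\x$ by $\Hom_{\x}(X_0,-)$, injectivize $\y$ by $D_{\Bbbk}\Hom_{\y}(-,Y_0)$, identify $\m(G,F)$ with $\operatorname{prin}(\Gamma)$, match $q=q_\Gamma$, check faithfulness by basis extension, and apply Proposition~\ref{prop:simson-finite}. The gap is the one concrete choice the equivalence rests on, the corner bimodule. As you state it, it fails, and ``with sides adjusted'' does not repair it.

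First, $M=\Hom_D(G(X_0),F(Y_0))$ is naturally a $\Lambda_2$--$\Lambda_1$-bimodule, via $b\cdot u\cdot a=F(b)\circ u\circ G(a)$. So it cannot be the corner of $\left(\begin{smallmatrix}\Lambda_1&N\\0&\Lambda_2\end{smallmatrix}\right)$ without passing to opposite rings. Second, and decisively, $\operatorname{prin}(\Gamma)$ records the $\Bbbk$-dual of its corner. With your identifications $P_{X_i}\cong e_i\Lambda_1$ and $I_{Y_j}\cong D_{\Bbbk}(\Lambda_2\eta_j)$ one gets
\[
\Hom_{\Lambda_2}\bigl(e_i\Lambda_1\otimes_{\Lambda_1}N,\,D_{\Bbbk}(\Lambda_2\eta_j)\bigr)\cong D_{\Bbbk}(e_iN\eta_j),
\]
and at the generators $\Hom_{\Lambda_2}(N,D_{\Bbbk}\Lambda_2)\cong D_{\Bbbk}(N)$. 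Hence your two claims cannot both hold except as noncanonical isomorphisms of vector spaces. One claim is $e_iN\eta_j\cong\Hom_D(G(X_i),F(Y_j))$; the other is that $u$ corresponds to $\varphi$ and reduces to $\Hom_D(G(X_i),F(Y_j))$ on indecomposables. Noncanonical isomorphisms suffice for counting dimensions but not for full faithfulness. Full faithfulness needs $D_{\Bbbk}(N)\cong M$ as $\Lambda_2$--$\Lambda_1$-bimodules, i.e.\ $N\cong D_{\Bbbk}(M)$. Relabelling sides, or passing to $\y^{\op}$ as in your fallback, does not change this: the corner must be isomorphic to the $\Bbbk$-dual of the relevant Hom bimodule.

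The missing step is to take $N=D_{\Bbbk}(M)$ and define
\[
\varphi_u(p\otimes\lambda)(h)=\lambda\bigl(F(h)\circ u\circ G(p)\bigr)
\]
for $p\in\Hom_{\x}(X_0,X)$, $\lambda\in N$ and $h\in\Hom_{\y}(Y,Y_0)$. At $(X_0,Y_0)$ this is the standard identification $\Hom_{\Lambda_2}(N,D_{\Bbbk}\Lambda_2)\cong D_{\Bbbk}(N)\cong M$, sending $\varphi_u\mapsto u$. Additivity and naturality extend it to all $X,Y$, which gives the equivalence.

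With this $N$, the rest of your argument goes through unchanged. Since $e_iN\eta_j\cong D_{\Bbbk}(\eta_jMe_i)$ has the same dimension as $\Hom_D(G(X_i),F(Y_j))$, you still get $q=q_\Gamma$. Your basis-extension argument shows that $\eta_jM$ is faithful over $\Lambda_1$ and $Me_i$ is faithful over $\Lambda_2$. By duality this is exactly the faithfulness of $N\eta_j$ and $e_iN$ required in condition~(ii).
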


\begin{proof}
We write $D_{\Bbbk}=\Hom_{\Bbbk}(-,\Bbbk)$ and define
$$
\begin{gathered}
X_0=\bigoplus_{i=1}^{r}X_i,\qquad Y_0=\bigoplus_{j=1}^{s}Y_j,\qquad
\Lambda_1=\End_{\x}(X_0),\qquad
\Lambda_2=\End_{\y}(Y_0),\\
M=\Hom_D(G(X_0),F(Y_0)),\qquad N=D_{\Bbbk}(M),\qquad
\Gamma=\begin{pmatrix}\Lambda_1&N\\0&\Lambda_2\end{pmatrix}.
\end{gathered}
$$
The $\Lambda_2$--$\Lambda_1$-bimodule structure on $M$ is given by
$$
b\cdot u\cdot a=F(b)\circ u\circ G(a)
\qquad(a\in\Lambda_1,\ b\in\Lambda_2,\ u\in M).
$$
Thus $N$ is a $\Lambda_1$--$\Lambda_2$-bimodule.
Let $e_i\in\Lambda_1$ and $\eta_j\in\Lambda_2$
be the primitive idempotents corresponding to $X_i$ and $Y_j$.

We now construct an equivalence
\begin{equation}\label{eq:map-prinjective}
 \Phi:\m(G,F)\xrightarrow{\sim}\operatorname{prin}(\Gamma).
\end{equation}
For $X\in\x$ and $Y\in\y$, we define
$$
P_X=\Hom_{\x}(X_0,X),\qquad
I_Y=D_{\Bbbk}(\Hom_{\y}(Y,Y_0)).
$$
For morphisms $f:X\to X'$ and $g:Y\to Y'$, we define
$$
P(f)(p)=f\circ p,\qquad
I(g)(\xi)(h')=\xi(h'\circ g)
$$
for $p\in P_X$, $\xi\in I_Y$ and
$h'\in\Hom_{\y}(Y',Y_0)$.
These functors give equivalences from $\x$ to the finitely
generated projective right $\Lambda_1$-modules and from $\y$
to the finitely generated injective right $\Lambda_2$-modules.

For $u:G(X)\to F(Y)$, we define
$\varphi_u:P_X\otimes_{\Lambda_1}N\to I_Y$ by
\begin{equation}\label{eq:phi-u}
\varphi_u(p\otimes\lambda)(h)
=\lambda\!\bigl(F(h)\circ u\circ G(p)\bigr)
\end{equation}
for $p\in P_X$, $\lambda\in N$ and
$h\in\Hom_{\y}(Y,Y_0)$.
This formula is balanced over $\Lambda_1$ and defines a
homomorphism of right $\Lambda_2$-modules.
We define $\Phi$ on objects and morphisms by
$$
\Phi(X,Y,u)=(P_X,I_Y,\varphi_u),\qquad
\Phi(f,g)=(P(f),I(g)).
$$
By \eqref{eq:phi-u}, for $f:X'\to X$ and $g:Y\to Y'$, we have
$$
\varphi_{F(g)\circ u\circ G(f)}
=I(g)\circ\varphi_u\circ(P(f)\otimes1_N).
$$
The displayed identity gives naturality and shows that $\Phi$ is
well defined on morphisms. Functoriality follows from that of
$P$ and $I$.

For $X=X_0$ and $Y=Y_0$, we have
$P_{X_0}=\Lambda_1$ and $I_{Y_0}=D_{\Bbbk}(\Lambda_2)$.
Under $\Lambda_1\otimes_{\Lambda_1}N\cong N$, the standard
tensor--dual identifications are
$$
\Hom_{\Lambda_2}(N,D_{\Bbbk}(\Lambda_2))
\cong D_{\Bbbk}(N\otimes_{\Lambda_2}\Lambda_2)
\cong D_{\Bbbk}(N)\cong M.
$$
They send $\psi$ to the element of $M$ corresponding to the
functional $\lambda\mapsto\psi(\lambda)(1_{\Lambda_2})$.
Since $\varphi_u(\lambda)(1_{\Lambda_2})=\lambda(u)$,
they send $\varphi_u$ to $u$.
Every object of $\x$ is a direct summand of a finite direct
sum of copies of $X_0$, and similarly for $\y$ and $Y_0$.
Additivity and naturality therefore give a natural isomorphism
\begin{equation}\label{eq:prinjective-map}
 \Hom_{\Lambda_2}(P_X\otimes_{\Lambda_1}N,I_Y)
 \cong\Hom_D(G(X),F(Y)),\qquad \varphi_u\longmapsto u.
\end{equation}

For objects $(X,Y,u)$ and $(X',Y',u')$ and morphisms
$f:X\to X'$ and $g:Y\to Y'$, naturality and
\eqref{eq:prinjective-map} give
$$
F(g)\circ u=u'\circ G(f)
\quad\Longleftrightarrow\quad
I(g)\circ\varphi_u
=\varphi_{u'}\circ(P(f)\otimes1_N).
$$
Since $P$ and $I$ are fully faithful, $\Phi$ is fully faithful.
Every object of $\operatorname{prin}(\Gamma)$ is isomorphic
to one of the form $(P_X,I_Y,\psi)$, and
\eqref{eq:prinjective-map} gives a unique $u$ with
$\psi=\varphi_u$.
Thus $\Phi$ is essentially surjective, and hence an equivalence.

The algebras $\Lambda_1$ and $\Lambda_2$ are basic and
finite-dimensional over $\Bbbk$. Their primitive diagonal corner
rings are $\End_{\x}(X_i)$ and
$\End_{\y}(Y_j)$, which are division rings by hypothesis.

For each $j$, the right $\Lambda_1$-module
$$
\eta_jM=\Hom_D(G(X_0),F(Y_j))
$$
is faithful. In fact, if $a\in\Lambda_1$ annihilates all these maps,
then $u\circ G(a)=0$ for every $D$-linear map
$u:G(X_0)\to F(Y_j)$. If $G(a)(x)\ne0$, extend this vector
to a $D$-basis of $G(X_0)$ and choose $u$ sending it to a
nonzero vector of $F(Y_j)$. This contradicts $u\circ G(a)=0$.
Hence $G(a)=0$,   and faithfulness of $G$ gives $a=0$.
Similarly, each left $\Lambda_2$-module
$Me_i=\Hom_D(G(X_i),F(Y_0))$ is faithful. Duality therefore
makes the left $\Lambda_1$-module $N\eta_j$ and the right
$\Lambda_2$-module $e_iN$ faithful.
Thus $\Gamma$ satisfies conditions \textup{(i)} and \textup{(ii)} of
Proposition~\ref{prop:simson-finite}.

By the choice of $e_i$ and $\eta_j$ as the projections onto
$X_i$ and $Y_j$, respectively, we have
$$
e_i\Lambda_1e_{i'}\cong\Hom_{\x}(X_{i'},X_i),\qquad
\eta_j\Lambda_2\eta_{j'}\cong\Hom_{\y}(Y_{j'},Y_j).
$$
For $X=\bigoplus_{i=1}^{r}X_i^{n_i}$ and
$Y=\bigoplus_{j=1}^{s}Y_j^{m_j}$, we have
$$
\begin{aligned}
\dim_{\Bbbk}\End_{\x}(X)
&=\sum_{i=1}^{r}\sum_{i'=1}^{r}
\dim_{\Bbbk}(e_i\Lambda_1e_{i'})\,n_i n_{i'},\\
\dim_{\Bbbk}\End_{\y}(Y)
&=\sum_{j=1}^{s}\sum_{j'=1}^{s}
\dim_{\Bbbk}(\eta_j\Lambda_2\eta_{j'})\,m_j m_{j'}.
\end{aligned}
$$
Moreover, $\eta_jMe_i\cong\Hom_D(G(X_i),F(Y_j))$ and
$e_iN\eta_j\cong D_{\Bbbk}(\eta_jMe_i)$, so we have
$$
\begin{aligned}
\dim_{\Bbbk}\Hom_D(G(X),F(Y))
&=\sum_{i=1}^{r}\sum_{j=1}^{s}
\dim_{\Bbbk}\Hom_D(G(X_i),F(Y_j))\,n_i m_j\\
&=\sum_{i=1}^{r}\sum_{j=1}^{s}
\dim_{\Bbbk}(e_iN\eta_j)\,n_i m_j.
\end{aligned}
$$
By \eqref{eq:map-form} and \eqref{eq:simson-form}, we have
$q(X,Y)=q_\Gamma(\mathbf n,\mathbf m)$.
Hence $q_\Gamma$ is weakly positive. By
Proposition~\ref{prop:simson-finite}, $\operatorname{prin}(\Gamma)$
is representation-finite, and the equivalence
\eqref{eq:map-prinjective} proves the assertion.
\end{proof}

We can now prove Theorem~\ref{thm:map-finite}.

\begin{proof}[Proof of Theorem~\ref{thm:map-finite}]
Let $\x'$ and $\y'$ be the full additive subcategories generated
by indecomposable $X$ with $G(X)\ne0$ and $Y$ with $F(Y)\ne0$,
respectively.
For $Z=(X,Y,u)\in\m(G,F)$, write
$X\cong X_0\oplus X_1$ and $Y\cong Y_0\oplus Y_1$,
where $G(X_0)=F(Y_0)=0$, $X_1\in\x'$ and $Y_1\in\y'$.
So we have
$$
Z\cong(X_0,0,0)\oplus(0,Y_0,0)\oplus(X_1,Y_1,u_1),
$$
where $u_1:G(X_1)\to F(Y_1)$ is induced by $u$.
Since $\x$ and $\y$ are representation-finite, there are only
finitely many indecomposables of the forms $(X,0,0)$ and
$(0,Y,0)$ up to isomorphism.
So it suffices to prove that the full subcategory
$\m(G|_{\x'},F|_{\y'})$ is representation-finite.
Since this subcategory satisfies the same hypothesis, we may
replace $\x,\y$ by $\x',\y'$ and assume that $G,F$ are faithful
on objects.
The conclusion is immediate if $\x=0$ or $\y=0$, so we assume
that both are nonzero.

Apply Lemma~\ref{lem:faithful-image} to both functors and set
$$
\overline{\x}=\x/\ker G,\qquad
\overline{\y}=\y/\ker F.
$$
The induced functors $\overline G,\overline F$ are faithful,
and both quotient categories remain nonzero, Hom-finite,
Krull--Schmidt and representation-finite.

We now show that any two indecomposable objects of
$\m(\overline G,\overline F)$ with isomorphic
components are isomorphic. Choose objects in $\m(G,F)$ lifting the two given
indecomposable objects.
By Lemma~\ref{lem:faithful-image}, these lifts are
indecomposable, and their components are isomorphic because
the component quotient functors induce bijections on
isomorphism classes. The hypothesis makes the lifts
isomorphic, so their images are isomorphic as well.

Set $\Bbbk=R/\ker(R\to D)$.
By the observation preceding Definition~\ref{def:map-form},
$D$ is finite-dimensional over $\Bbbk$, and the categories
$\overline{\x},\overline{\y}$ and functors
$\overline G,\overline F$ satisfy the assumptions of that definition.
 By Proposition~\ref{prop:map-positivity} and
Lemma~\ref{lem:simson-criterion},
$\m(\overline G,\overline F)$ is representation-finite.
By Lemma~\ref{lem:faithful-image}, $\m(G,F)$ is also
representation-finite.
\end{proof}

\section{Factorization categories}\label{sec:factorization}
\subsection{Factorizations of a natural transformation}
We use the factorization construction of MacPherson and Vilonen
\cite[\S1]{MV} in the following additive setting.

\begin{definition}\label{def:factorization}
Let $A$ be a local Artin $R$-algebra and put
$D=A/J(A)$. For a Hom-finite Krull--Schmidt
category $\CC$,   fix additive functors
$$
G,F:\CC\longrightarrow\modu D
$$
and a natural transformation $\mu:G\to F$.
We regard $D$-modules as semisimple $A$-modules through
$A\to D$. The \emph{factorization category}
$\f_A(\mu;G,F)$ is defined as follows.
\begin{itemize}
\item Objects are tuples $(N,X,\beta,\alpha)$ with $N\in\CC$,
$X\in\modu A$ and homomorphisms of $A$-modules
$$
G(N)\xrightarrow{\beta}X\xrightarrow{\alpha}F(N)
\qquad\text{such that}\qquad \alpha\circ\beta=\mu_N.
$$
\item A morphism
$(N,X,\beta,\alpha)\to(N',X',\beta',\alpha')$ is a pair
$(f,g)$ with $f:N\to N'$ in $\CC$ and
$g:X\to X'$ in $\modu A$ for which the following
diagram commutes:
$$
\begin{tikzcd}[column sep=large]
G(N)\arrow[r,"\beta"]\arrow[d,"G(f)"']
&X\arrow[r,"\alpha"]\arrow[d,"g"']&F(N)\arrow[d,"F(f)"]\\
G(N')\arrow[r,"\beta'"']&X'\arrow[r,"\alpha'"']&F(N')
\end{tikzcd}
$$
\end{itemize}
Identities and composition are defined componentwise.
\end{definition}

\begin{remark}\label{rem:factorization-basic}
A morphism $(f,g)$ in $\f_A(\mu;G,F)$ is an isomorphism if and
only if both $f$ and $g$ are isomorphisms. An object
$(N,X,\beta,\alpha)$ is zero if and only if $N=0$ and $X=0$.
For every $Y\in\modu A$, we have
$\End_{\f_A(\mu;G,F)}(0,Y,0,0)\cong\End_A(Y)$.
Thus $(0,Y,0,0)$ is indecomposable if and only if $Y$ is.
The tuple $(N,0,0,0)$ is an object of $\f_A(\mu;G,F)$ precisely
when $\mu_N=0$. In that case its endomorphism ring is
$\End_\CC(N)$, so it is indecomposable if and only if $N$ is.

If $N$ and $X$ are indecomposable and at least one of
$\beta,\alpha$ is nonzero, then $(N,X,\beta,\alpha)$ is
indecomposable. Indeed, a nontrivial decomposition would
place $N$ and $X$ in different summands and force both maps
to vanish.
\end{remark}

\begin{lemma}\label{lem:factorization-ks}
The category $\f_A(\mu;G,F)$ is a Hom-finite,
idempotent-complete $R$-linear additive category, with finite
direct sums formed componentwise. In particular, it is
Krull--Schmidt.
\end{lemma}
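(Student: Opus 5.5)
We check the four properties directly, in the same way Remark~\ref{rem:map-category-properties} does for comma categories, and then deduce the Krull--Schmidt property from \cite[Corollary~4.4]{Krause}.

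\emph{$R$-linearity and additivity.} Each morphism space $\Hom((N,X,\beta,\alpha),(N',X',\beta',\alpha'))$ is the set of pairs $(f,g)$ satisfying the two linear conditions $g\circ\beta=\beta'\circ G(f)$ and $F(f)\circ\alpha=\alpha'\circ g$. Since $G$ and $F$ are $R$-linear, this set is an $R$-submodule of $\Hom_\CC(N,N')\oplus\Hom_A(X,X')$, and composition is $R$-bilinear.

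\emph{Hom-finiteness.} Both summands $\Hom_\CC(N,N')$ and $\Hom_A(X,X')$ have finite $R$-length, so every morphism space does too.

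\emph{Direct sums.} For two objects we form
$$
(N\oplus N',\,X\oplus X',\,\beta\oplus\beta',\,\alpha\oplus\alpha'),
$$
using the identifications $G(N\oplus N')\cong G(N)\oplus G(N')$ and $F(N\oplus N')\cong F(N)\oplus F(N')$, which hold because $G$ and $F$ are additive. Naturality of $\mu$ under the inclusions and projections gives $\mu_{N\oplus N'}=\mu_N\oplus\mu_{N'}$. Hence $(\alpha\oplus\alpha')(\beta\oplus\beta')=\mu_{N\oplus N'}$, so this tuple is an object. The componentwise inclusions and projections are morphisms and satisfy the biproduct identities. The zero object is $(0,0,0,0)$.

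\emph{Idempotent completeness.} This is the step needing the most care. Let $e=(f,g)$ be an idempotent endomorphism of $(N,X,\beta,\alpha)$. Then $f$ and $g$ are idempotents. $\CC$ is Krull--Schmidt and hence idempotent complete, so $f$ splits as $N\xrightarrow{p}N_1\xrightarrow{i}N$ with $pi=1$ and $ip=f$. The idempotent $g$ splits in $\modu A$ as $X\xrightarrow{q}X_1\xrightarrow{j}X$. We then define
$$
\beta_1=q\circ\beta\circ G(i),\qquad \alpha_1=F(p)\circ\alpha\circ j.
$$
Compute, using the commuting squares $\alpha g=F(f)\alpha$ and $g\beta=\beta G(f)$:
\begin{align*}
\alpha_1\beta_1&=F(p)\,\alpha\, g\,\beta\, G(i)=F(p)F(f)\,\alpha\beta\, G(i)=F(pf)\,\mu_N\, G(i)\\
&=F(p)\,\mu_N\, G(i)=\mu_{N_1}\,G(p)\,G(i)=\mu_{N_1}.
\end{align*}
Here $pf=p$, and the last two equalities use naturality of $\mu$ and $pi=1$. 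So $(N_1,X_1,\beta_1,\alpha_1)$ is an object.

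Next we check that $(i,j)$ and $(p,q)$ are morphisms. For $(i,j)$ we need $j\beta_1=\beta G(i)$. Indeed $j\beta_1=g\beta G(i)=\beta G(fi)=\beta G(i)$, since $fi=i$. The compatibility $F(i)\alpha_1=\alpha j$ follows similarly from $F(f)\alpha=\alpha g$ and $gj=j$. The checks for $(p,q)$ are symmetric. Finally $(p,q)(i,j)=1$ and $(i,j)(p,q)=e$, so $e$ splits. This is also the argument of \cite[Proposition~3.5]{First} adapted to factorizations.

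Hom-finite idempotent-complete additive categories have semiperfect endomorphism rings, so \cite[Corollary~4.4]{Krause} shows that $\f_A(\mu;G,F)$ is Krull--Schmidt.
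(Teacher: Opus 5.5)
Your proof is correct and follows essentially the same route as the paper: morphism spaces are $R$-submodules of $\Hom_\CC(N,N')\oplus\Hom_A(X,X')$, naturality of $\mu$ gives componentwise direct sums, the idempotent is handled by splitting $f$ and $g$ separately, and \cite[Corollary~4.4]{Krause} gives the Krull--Schmidt property. The only difference is cosmetic. You split the idempotent through a retraction--section pair $(p,q)$, $(i,j)$ and verify $\alpha_1\beta_1=\mu_{N_1}$ by direct computation. The paper instead decomposes $N$ and $X$ into complementary summands and observes that $\beta,\alpha$ become block diagonal; your version is slightly more explicit.
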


\begin{proof}
Under the canonical identifications supplied by additivity
of $G,F$, naturality gives
$\mu_{N_1\oplus N_2}=\mu_{N_1}\oplus\mu_{N_2}$.
Hence componentwise direct sums, injections and projections
make $\f_A(\mu;G,F)$ an additive category.

Let $(f,g)$ be an idempotent endomorphism of
$Z=(N,X,\beta,\alpha)$. Split $f$ and $g$ in $\CC$ and
$\modu A$, respectively, and write
$$
N\cong N_1\oplus N_2,\qquad X\cong X_1\oplus X_2,
\qquad f=1_{N_1}\oplus0,\quad g=1_{X_1}\oplus0.
$$
Since $(f,g)$ is an endomorphism of $Z$, we have
$g\circ\beta=\beta\circ G(f)$ and
$F(f)\circ\alpha=\alpha\circ g$.
These relations make $\beta$ and $\alpha$ block diagonal
in the chosen decompositions. Their diagonal
blocks satisfy
$$
\alpha_i\circ\beta_i=\mu_{N_i}
\qquad(i=1,2)
$$
by naturality of $\mu$. They therefore define objects
$Z_i=(N_i,X_i,\beta_i,\alpha_i)$ with $Z\cong Z_1\oplus Z_2$,
and $(f,g)$ is the projection onto $Z_1$.
Thus $\f_A(\mu;G,F)$ is idempotent complete.

For $Z=(N,X,\beta,\alpha)$ and $Z'=(N',X',\beta',\alpha')$,
the space $\Hom_{\f_A(\mu;G,F)}(Z,Z')$ is an $R$-submodule of
$\Hom_\CC(N,N')\oplus\Hom_A(X,X')$,   so $\f_A(\mu;G,F)$ is
Hom-finite. Since it is also idempotent complete, it is
Krull--Schmidt by \cite[Corollary~4.4]{Krause}.
\end{proof}

\subsection{Reduction and finiteness}
Throughout this subsection, let $A$ be a local Artin $R$-algebra
and write $D=A/J(A)$. Let $\mu:G\to F$ be a natural transformation
between additive functors $G,F:\CC\to\modu D$, where $\CC$ is
Hom-finite and Krull--Schmidt. We work with the category $\f_A(\mu;G,F)$.

We consider the kernel, image and cokernel functors
$$
K,I,C:\CC\longrightarrow\modu D,
$$
given by
$$
K(N)=\ker\mu_N,\qquad I(N)=\im\mu_N,\qquad
C(N)=\coker\mu_N.
$$
Naturality of $\mu$ induces the maps $K(f)$, $I(f)$ and
$C(f)$ for every $f:N\to N'$. Since every $D$-module is semisimple, we choose decompositions
of $G(N)$ and $F(N)$ for each $N$ in which $\mu_N$ has the form
$$
G(N)=K(N)\oplus I(N)
\xrightarrow{\;\mu_N=\left(\begin{smallmatrix}0&1\\0&0\end{smallmatrix}\right)\;}
I(N)\oplus C(N)=F(N).
$$

For an object $Z=(N,X,\beta,\alpha)$ of
$\f_A(\mu;G,F)$, set
$$
\iota=\beta|_{I(N)},\qquad
p=\operatorname{pr}_{I(N)}\circ\alpha:X\longrightarrow I(N),
$$
where $\operatorname{pr}_{I(N)}:I(N)\oplus C(N)\to I(N)$
is the projection onto the first summand. The relation
$\alpha\circ\beta=\mu_N$ gives $p\circ\iota=1_{I(N)}$.
Taking $Y=\ker p$, we obtain a decomposition
$$
X\cong I(N)\oplus Y.
$$
In these coordinates, the maps take the form
\begin{equation}\label{eq:factorization-normal-form}
\begin{aligned}
 \beta&=\begin{pmatrix}0&1\\b&0\end{pmatrix}:
 K(N)\oplus I(N)\longrightarrow I(N)\oplus Y,\\
 \alpha&=\begin{pmatrix}1&0\\0&a\end{pmatrix}:
 I(N)\oplus Y\longrightarrow I(N)\oplus C(N),
\end{aligned}
\end{equation}
with $a\circ b=0$.

\begin{definition}\label{def:reduction}
The category $\f_A(0;K,C)$ is called the
\emph{reduced category} of $\f_A(\mu;G,F)$.
The object
$$
(N,Y,b,a)\in\f_A(0;K,C)
$$
constructed above is called a \emph{reduced object} associated
with $Z=(N,X,\beta,\alpha)$.
\end{definition}

Conversely, for
$(N,Y,b,a)\in\f_A(0;K,C)$, set
$X=I(N)\oplus Y$ and define $\beta,\alpha$ by
\eqref{eq:factorization-normal-form}. Their composite
is $\mu_N$ because $a\circ b=0$.

\begin{lemma}\label{lem:normal-form}
The construction above induces a bijection between the
isomorphism classes of objects of $\f_A(\mu;G,F)$
and those of $\f_A(0;K,C)$, given by
$$
[(N,X,\beta,\alpha)]\longmapsto[(N,Y,b,a)].
$$
This bijection preserves direct sums and reflects zero objects.
In particular, it restricts to a bijection between the
isomorphism classes of indecomposable objects.
Corresponding objects have the same $\CC$-component and satisfy
$$
X\cong I(N)\oplus Y.
$$
\end{lemma}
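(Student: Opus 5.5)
The plan is to show that the assignment $Z\mapsto(N,Y,b,a)$ is well defined on isomorphism classes, and then to construct its inverse explicitly. First, the choice of $Y$ and of the coordinates must not matter. Fix $N$ and the chosen decompositions $G(N)=K(N)\oplus I(N)$ and $F(N)=I(N)\oplus C(N)$. For $Z=(N,X,\beta,\alpha)$ we have $p\circ\iota=1_{I(N)}$, so $\iota$ is a split mono of $A$-modules with retraction $p$. Hence $X=\iota(I(N))\oplus\ker p$.

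Next I will check the block form \eqref{eq:factorization-normal-form}.
\begin{itemize}
\item On $I(N)$, $\beta$ restricts to $\iota$, which has image $\iota(I(N))$.
\item On $K(N)$, $\alpha\circ\beta=\mu_N$ vanishes. Hence $p\beta|_{K(N)}=0$, so $\beta(K(N))\subseteq Y$; this defines $b$.
\item The $I(N)$-component of $\alpha$ is $p$. It is the identity on $\iota(I(N))$ and zero on $Y$.
\item On $\iota(I(N))$, the $C(N)$-component of $\alpha$ is $\operatorname{pr}_C\circ\mu_N=0$.
\item The remaining block of $\alpha$ is $a=\operatorname{pr}_C\alpha|_Y$. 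Then $a\circ b=\operatorname{pr}_C\mu_N|_{K(N)}=0$.
\end{itemize}

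The key step is a comparison of morphisms. I will show that isomorphisms $Z\to Z'$ correspond to isomorphisms of reduced objects. For the direction from $\f_A(\mu;G,F)$ to the reduced category, let $(f,g):Z\to Z'$ be an isomorphism.

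By naturality, $G(f)$ maps $K(N)\to K(N')$ and $F(f)$ maps $I(N)\to I(N')$ compatibly with $\mu$. In the chosen coordinates, $G(f)$ and $F(f)$ are therefore triangular:
\[
G(f)=\begin{pmatrix}K(f)&*\\0&I(f)\end{pmatrix},\qquad
F(f)=\begin{pmatrix}I(f)&*\\0&C(f)\end{pmatrix},
\]
where $G(f)$ preserves kernels and $F(f)$ preserves images. Write $g$ as a $2\times2$ matrix with respect to $X=I(N)\oplus Y$ and $X'=I(N')\oplus Y'$, and set $g_Y=\operatorname{pr}_{Y'}\circ g|_Y$.

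I then plan to verify three things.
\begin{itemize}
\item $(f,g_Y)$ is a morphism $(N,Y,b,a)\to(N',Y',b',a')$. Compare the $(Y',K)$ entry of $g\beta=\beta'G(f)$ and the $(C,Y)$ entry of $F(f)\alpha=\alpha'g$. The $*$ terms in the triangular forms drop out, because the relevant entries of $\beta'$ and $\alpha'$ are zero.
\item The assignment is functorial modulo these off-diagonal terms.
\item $g_Y$ is an isomorphism when $g$ is. Here the triangularity is essential. From $\alpha'g=F(f)\alpha$ we get $p'g=I(f)p+(\ast)\circ a$-type terms, and the $(I(N'),Y)$ entry of $g$ involves only the $*$ of $F(f)$. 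I will use that $f\mapsto(G(f),F(f))$ and $g$ are compatible with the filtrations $I\subseteq X$. Concretely, $g$ maps $\iota(I(N))$ onto $\iota'(I(N'))$, since $g\iota=\iota' I(f)+\beta'(\ast)$ and the $K$-part lands in $Y'$. The cleanest approach is to view $g$ as inducing a map on $X/\iota(I(N))\cong Y$. That map is exactly $g_Y$ up to a correction, and it is invertible whenever $g$ is, by applying the same construction to $g^{-1}$.
\end{itemize}

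For the converse, I will start from an isomorphism $(f,h)$ of reduced objects. Define
\[
g=\begin{pmatrix}I(f)&\gamma\\ \delta&h\end{pmatrix},
\]
with $\gamma$ and $\delta$ determined by the $*$ entries of $F(f)$ and $G(f)$, namely $\gamma=*_F\circ a$ and $\delta=b'\circ *_G$. I will then check that both squares commute. The map $g$ is invertible because modulo the off-diagonal nilpotent-free corrections it is block triangular, with invertible diagonal blocks $I(f)$ and $h$. Since the two constructions are mutually inverse on objects up to isomorphism, this gives the bijection.

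Additivity follows because the construction commutes with componentwise direct sums, $I$ being additive. Zero objects correspond: $Z=0$ forces $N=0$ and $X=0$, and the reduced object is zero if and only if $N=0$ and $Y=0$, in which case $X\cong I(0)\oplus0=0$. A bijection preserving direct sums and reflecting zero objects restricts to a bijection on indecomposable isomorphism classes. The identity $X\cong I(N)\oplus Y$ holds by construction.

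The main obstacle is the bookkeeping of the off-diagonal $*$ entries in this morphism comparison, in particular showing that $g$ restricts and corestricts to an isomorphism on $Y$. I expect the quotient viewpoint $Y\cong X/\iota(I(N))$ to resolve it most cleanly.
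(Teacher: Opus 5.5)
\textbf{Gap in the isomorphism criterion.} Most of your setup matches the paper's and is correct: the normal form, the triangular shape of $G(f)$ and $F(f)$, the check that $(f,g_Y)$ is a morphism of reduced objects, and the formula $g=\begin{pmatrix}I(f)&v\circ a\\ b'\circ u&h\end{pmatrix}$, where $u,v$ are the off-diagonal blocks of $G(f)$ and $F(f)$. The gap is in the step that carries the lemma: for invertible $f$, the map $g$ is invertible if and only if $h$ is.

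\emph{The forward direction.} Your justification rests on the claim that $g$ maps $\iota(I(N))$ onto $\iota'(I(N'))$, and this is false. The relation $g\circ\beta=\beta'\circ G(f)$ gives $g\circ\iota=\iota'\circ I(f)+b'\circ u$. The term $b'\circ u$ lies in $Y'$ and is in general nonzero, because $G(f)$ need not preserve the chosen complement $I(N)$ of $K(N)$. Nor does $g$ map $Y$ into $Y'$, since its $(I(N'),Y)$-entry is $v\circ a$. So $g$ induces no map $X/\iota(I(N))\to X'/\iota'(I(N'))$, and the quotient viewpoint does not produce $h$.

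\emph{The converse direction.} Your $g$ has two nonzero off-diagonal blocks, so it is not block triangular. Invertible diagonal blocks alone do not make a block matrix invertible. The phrase ``modulo the off-diagonal corrections'' is exactly where an argument is needed.

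\textbf{The missing idea.} You need the relation $a\circ b=0$, used together with $h\circ b=b'\circ K(f)$. Writing $k=K(f)$, the paper observes
\[
g\begin{pmatrix}1&0\\-b\circ k^{-1}\circ u&1\end{pmatrix}=\begin{pmatrix}I(f)&v\circ a\\0&h\end{pmatrix}.
\]
The upper-left block stays $I(f)$ because $v\circ a\circ b=0$. The lower-left block vanishes because $h\circ b\circ k^{-1}=b'$. The unipotent factor is invertible and so is $I(f)$. Hence $g$ is invertible if and only if $h$ is, which gives both directions at once. Applied to $(1_N,1_X)$, the same identity gives independence of the choices. With this in place, the rest of your argument goes through (additivity, zero objects, passage to indecomposables).

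Your idea of comparing with $g^{-1}$ can also be rescued for the forward direction. One has $(g^{-1})_Y\circ g_Y=1_Y-b\circ u^{\sharp}\circ v\circ a$, where $u^{\sharp}$ is the off-diagonal block of $G(f^{-1})$. This correction squares to zero precisely because $a\circ b=0$, and symmetrically for $g_Y\circ(g^{-1})_Y$. Either way, $a\circ b=0$ is the idea your sketch is missing.
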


\begin{proof}
Fix any choices of splittings for two objects and write them
in the coordinates of \eqref{eq:factorization-normal-form}.
Let $(f,g)$ be a morphism between them. In the chosen decompositions the naturality equation
$F(f)\circ\mu_N=\mu_{N'}\circ G(f)$ forces the lower-left
blocks of $G(f)$ and $F(f)$ to vanish and their blocks on
$I(N)$ to agree. The conditions
$g\circ\beta=\beta'\circ G(f)$ and
$F(f)\circ\alpha=\alpha'\circ g$ are expressed by the
following commutative diagram:
$$
\begin{tikzcd}[ampersand replacement=\&,column sep=huge,row sep=huge]
K(N)\oplus I(N)
\arrow[r,"{\left(\begin{smallmatrix}0&1\\b&0\end{smallmatrix}\right)}"]
\arrow[d,"{G(f)=\left(\begin{smallmatrix}k&u\\0&i\end{smallmatrix}\right)}"']
\&I(N)\oplus Y
\arrow[r,"{\left(\begin{smallmatrix}1&0\\0&a\end{smallmatrix}\right)}"]
\arrow[d,"g"]
\&I(N)\oplus C(N)
\arrow[d,"{F(f)=\left(\begin{smallmatrix}i&v\\0&c\end{smallmatrix}\right)}"]\\
K(N')\oplus I(N')
\arrow[r,"{\left(\begin{smallmatrix}0&1\\b'&0\end{smallmatrix}\right)}"']
\&I(N')\oplus Y'
\arrow[r,"{\left(\begin{smallmatrix}1&0\\0&a'\end{smallmatrix}\right)}"']
\&I(N')\oplus C(N')
\end{tikzcd}.
$$
Here $k=K(f)$, $i=I(f)$ and $c=C(f)$.
The diagram commutes if and only if
$$
g=\begin{pmatrix}i&v\circ a\\b'\circ u&h\end{pmatrix},
\qquad h\circ b=b'\circ k,\qquad a'\circ h=c\circ a.
$$
The last two equations say exactly that $(f,h)$ is a
morphism between the corresponding objects of
$\f_A(0;K,C)$.
If $f$ is an isomorphism, then $k,i,c$ are invertible.
Using $a\circ b=0$ and $h\circ b=b'\circ k$,   we obtain
$$
g\begin{pmatrix}1&0\\-b\circ k^{-1}\circ u&1\end{pmatrix}
=\begin{pmatrix}i&v\circ a\\0&h\end{pmatrix}.
$$
Thus $(f,g)$ is an isomorphism if and only if $(f,h)$ is.

We next check that the construction is well defined on
isomorphism classes. Let two choices of splittings for the same object
$Z=(N,X,\beta,\alpha)$ give reduced objects
$(N,Y_1,b_1,a_1)$ and $(N,Y_2,b_2,a_2)$.
Express $(1_N,1_X)$ in the corresponding normal-form
coordinates. Since its middle component is invertible, the
calculation above gives an isomorphism $h:Y_1\to Y_2$ with
$h\circ b_1=b_2$ and $a_2\circ h=a_1$. Thus $(1_N,h)$
is an isomorphism between the reduced objects. If two original
objects are isomorphic,   the same calculation applied to an
isomorphism between them shows that their reduced objects
are isomorphic.

The isomorphism criterion above also gives injectivity on
isomorphism classes. Every reduced object is obtained by
reducing its reconstruction, which gives surjectivity.

Since $K,I,C$ are additive, taking direct sums of the chosen
splittings shows that the bijection preserves direct sums.
A reduced object $(N,Y,b,a)$ is zero precisely when
$N\cong0$ and $Y=0$. By $X\cong I(N)\oplus Y$, this is
equivalent to $Z=(N,X,\beta,\alpha)$ being zero. If a reduced
object decomposes into two nonzero summands, their inverse
images give a decomposition of the original object, by
additivity and injectivity of the bijection. The reverse
implication follows from preservation of direct sums.
Thus indecomposable objects correspond.

\end{proof}

\begin{definition}\label{def:factorization-uniqueness}
We say that $\f_A(\mu;G,F)$ satisfies
\emph{property~$(P)$} if any two indecomposable objects
$Z=(N,X,\beta,\alpha)$ and $Z'=(N',X',\beta',\alpha')$
in $\f_A(\mu;G,F)$ are isomorphic whenever
$N\cong N'$ in $\CC$ and $X\cong X'$ in $\modu A$.
\end{definition}

We first relate the reduced category to two comma categories.
\begin{lemma}\label{lem:factorization-comma}
	Suppose that $K(T)=0$ or $C(T)=0$ for every indecomposable
	$T\in\CC$. Let $\CC_1$ and $\CC_2$ be the full additive
	subcategories generated by the indecomposable objects $T$
	with $K(T)\ne0$ and $K(T)=0$, respectively. Set
	$$
	\mathcal A=\m(K|_{\CC_1},\soc),
	\qquad \soc:\modu A\longrightarrow\modu D.
	$$
	There are additive functors
	$$
	\begin{aligned}
		Q&:\mathcal A\longrightarrow\modu D,\\
		\Theta&:\m(Q,C|_{\CC_2})\longrightarrow\f_A(0;K,C)
	\end{aligned}
	$$
	such that $\Theta$ induces a bijection on isomorphism
	classes of indecomposable objects.
\end{lemma}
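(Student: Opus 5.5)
The plan is to construct $Q$ and $\Theta$ explicitly, using the fact that under the hypothesis the $\CC$-component splits into a part seen only by $K$ and a part seen only by $C$.

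\emph{Splitting the $\CC$-component.} Every $N\in\CC$ decomposes as $N=N_1\oplus N_2$ with $N_1\in\CC_1$ and $N_2\in\CC_2$. By hypothesis $C$ vanishes on $\CC_1$ and $K$ vanishes on $\CC_2$. Hence $K(N)=K(N_1)$ and $C(N)=C(N_2)$. Consequently, for a morphism $f=(f_{ij})\colon N_1\oplus N_2\to N_1'\oplus N_2'$ we get
$$K(f)=K(f_{11}),\qquad C(f)=C(f_{22}).$$
A reduced object $(N,Y,b,a)$ therefore consists of two maps. The first is $b\colon K(N_1)\to Y$, whose image is a semisimple submodule and so lies in $\soc Y$. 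The second is $a\colon Y\to C(N_2)$, which kills $\rad Y$ because its target is semisimple. The condition $a\circ b=0$ says that $a$ factors through the $D$-module
$$Q(N_1,Y,b)=Y/(\rad Y+\im b).$$
Here $\im b\subseteq\soc Y$ is an $A$-submodule, and the quotient is annihilated by $J(A)$.

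\emph{Defining $Q$ and $\Theta$.} We regard $b$ as a map $K(N_1)\to\soc Y$, so that $(N_1,Y,b)$ is an object of $\mathcal A=\m(K|_{\CC_1},\soc)$. For a morphism $(f,g)$ of $\mathcal A$, the relation $g\circ b=b'\circ K(f)$ shows that $g(\im b)\subseteq\im b'$. Also $g(\rad Y)\subseteq\rad Y'$. So $g$ induces $Q(f,g)$, and $Q$ is an additive functor. Note that $\mathcal A$ is Hom-finite and Krull--Schmidt by Remark~\ref{rem:map-category-properties}, so $\m(Q,C|_{\CC_2})$ makes sense.

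Define $\Theta$ on objects by
$$\Theta\bigl((N_1,Y,b),N_2,u\bigr)=(N_1\oplus N_2,\;Y,\;b,\;u\circ\pi),$$
where $\pi\colon Y\to Q(N_1,Y,b)$ is the projection. On morphisms, send $((f,g),f_2)$ to $(f\oplus f_2,g)$. The compatibility condition $C(f_2)\circ u=u'\circ Q(f,g)$ in the comma category translates exactly into $a'\circ g=C(f)\circ a$, because $\pi$ is surjective. The other condition $g\circ b=b'\circ K(f)$ is built into $\mathcal A$. Thus $\Theta$ is a well-defined additive functor. It is essentially surjective by the factorization of $a$ above, and it reflects zero objects.

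\emph{Bijection on isomorphism classes.} This is the main point, because $\Theta$ is not full: reduced morphisms may have off-diagonal components $f_{12},f_{21}$. Let $(f,h)$ be an isomorphism $\Theta(Z)\to\Theta(Z')$ in $\f_A(0;K,C)$. There are no common indecomposable summands between $\CC_1$ and $\CC_2$, so by Lemma~\ref{lem:radical-matrices} the off-diagonal blocks $f_{12},f_{21}$ lie in the radical. Hence $f_{11}$ and $f_{22}$ are invertible. The morphism conditions involve $f$ only through $K(f)=K(f_{11})$ and $C(f)=C(f_{22})$. Therefore $((f_{11},h),f_{22})$ is a morphism $Z\to Z'$ in the comma category, and it is an isomorphism by Remarks~\ref{rem:map-category-properties}. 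So $\Theta$ is injective on isomorphism classes, and together with essential surjectivity it is bijective on them.

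\emph{Indecomposables.} Since $\Theta$ is additive and bijective on isomorphism classes, the argument at the end of Lemma~\ref{lem:normal-form} applies. A decomposition $\Theta(Z)\cong W_1\oplus W_2$ lifts to $Z\cong Z_1\oplus Z_2$ with $\Theta(Z_i)\cong W_i$. Since $\Theta$ reflects zero objects, it preserves and reflects indecomposability. This gives the claimed bijection on indecomposable isomorphism classes.

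I expect the invertibility of the diagonal blocks $f_{11},f_{22}$ to be the main step requiring care.
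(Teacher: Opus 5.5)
Your proposal is correct and follows the paper's proof essentially step for step: the same functor $Q(Z)=Y/(J(A)Y+\im b)$ and the same $\Theta$, essential surjectivity by factoring $a$ through $Q$, injectivity on isomorphism classes by showing the diagonal blocks $f_{11},f_{22}$ are invertible via Lemma~\ref{lem:radical-matrices}, and the same additivity argument for indecomposables. One point to tighten is that Lemma~\ref{lem:radical-matrices} is stated for endomorphism rings, so you should first use Krull--Schmidt to get $N_i\cong N_i'$ (as the paper does) and transport $f$ to an automorphism of $N_1\oplus N_2$ before concluding that $f_{11}$ and $f_{22}$ are invertible.
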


\begin{proof}
	By assumption, $C|_{\CC_1}=0=K|_{\CC_2}$.
	For $T_i\in\CC_i$, we use the canonical identifications
	$$
	K(T_1\oplus T_2)\cong K(T_1),\qquad
	C(T_1\oplus T_2)\cong C(T_2).
	$$
	
	For $Z=(T_1,Y,b)\in\mathcal A$, regard
	$b:K(T_1)\to\soc Y$ as a map into $Y$ and set
	$$
	Q(Z)=Y/(J(A)Y+\im b).
	$$
	Let $\rho_Z:Y\to Q(Z)$ be the quotient map.
	For a morphism $(f_1,g):Z\to Z'$ in $\mathcal A$,
	where $Z'=(T_1',Y',b')$, we have
	$
	g\circ b=b'\circ K(f_1),
	$
	and so $g(\im b)\subseteq\im b'$.
	Since $g$ is $A$-linear, we have
	$g(J(A)Y)\subseteq J(A)Y'$.
	Hence
	$$
	g(J(A)Y+\im b)\subseteq J(A)Y'+\im b',
	$$
	so $(f_1,g)$ induces a $D$-linear map
	$Q(f_1,g):Q(Z)\to Q(Z')$ satisfying
	$$
	Q(f_1,g)\circ\rho_Z=\rho_{Z'}\circ g.
	$$
	These assignments define an additive functor
	$Q:\mathcal A\to\modu D$.

	For an object $(Z,T_2,\overline a)$ of
	$\m(Q,C|_{\CC_2})$, with $Z=(T_1,Y,b)$, set
	$$
	\Theta(Z,T_2,\overline a)
	=(T_1\oplus T_2,Y,b,\overline a\circ\rho_Z).
	$$
	On morphisms, set
	$$
	\Theta((f_1,g),f_2)=(f_1\oplus f_2,g).
	$$
	The relation $\rho_Z\circ b=0$ and the naturality identity
	for $\rho_Z$ show that these formulas define an additive
	functor
	$$
	\Theta:\m(Q,C|_{\CC_2})\longrightarrow\f_A(0;K,C).
	$$
	
	We first show that $\Theta$ is essentially surjective.
	Let $W=(N,Y,b,a)\in\f_A(0;K,C)$.
	By Krull--Schmidt, write $N\cong T_1\oplus T_2$ with
	$T_i\in\CC_i$. Since $J(A)K(T_1)=0$, the $A$-linear map
	$b:K(T_1)\to Y$ has image in $\soc Y$.
	Thus $Z=(T_1,Y,b)$ is an object of $\mathcal A$.
	The map $a:Y\to C(T_2)$ vanishes on $J(A)Y+\im b$
	because $J(A)C(T_2)=0$ and $a\circ b=0$.
	Hence there is a unique $D$-linear map
	$\overline a:Q(Z)\to C(T_2)$ such that
	$$
	a=\overline a\circ\rho_Z.
	$$
	Thus
	$
	\Theta(Z,T_2,\overline a)
	=(T_1\oplus T_2,Y,b,a)\cong W.
	$
	
	Now let $U=(Z,T_2,\overline a)$ and
	$U'=(Z',T_2',\overline a')$, where
	$Z=(T_1,Y,b)$ and $Z'=(T_1',Y',b')$.
	Suppose that $(f,g):\Theta(U)\to\Theta(U')$ is an isomorphism,
	and write
	$$
	f=\begin{pmatrix}f_1&u\\v&f_2\end{pmatrix}:
	T_1\oplus T_2\longrightarrow T_1'\oplus T_2'.
	$$
	Since $\CC_1$ and $\CC_2$ have no common
	indecomposable objects up to isomorphism,
	Krull--Schmidt gives $T_i\cong T_i'$.
	By Lemma~\ref{lem:radical-matrices},
	$f_1$ and $f_2$ are isomorphisms.
    The morphism relations give
	$$
	g\circ b=b'\circ K(f_1),\qquad
	C(f_2)\circ\overline a\circ\rho_Z
	=\overline a'\circ\rho_{Z'}\circ g.
	$$
	The first identity shows that $(f_1,g)$ is an isomorphism
	in $\mathcal A$. Since $Q(f_1,g)\circ\rho_Z=\rho_{Z'}\circ g$ and
	$\rho_Z$ is surjective, the second identity gives
	$$
	C(f_2)\circ\overline a
	=\overline a'\circ Q(f_1,g).
	$$
	Therefore $((f_1,g),f_2):U\to U'$ is an isomorphism
	in $\m(Q,C|_{\CC_2})$.
	This proves that $\Theta$ induces a bijection on
	isomorphism classes.
	
	Finally, $\Theta$ is additive and reflects zero objects.
	If $\Theta(U)\cong W_1\oplus W_2$, choose $U_i$ with
	$\Theta(U_i)\cong W_i$. Additivity and injectivity on
	isomorphism classes give $U\cong U_1\oplus U_2$.
	Thus $U$ is indecomposable if and only if $\Theta(U)$ is,
	and the bijection restricts to a bijection on isomorphism
	classes of indecomposable objects.
\end{proof}

We now apply Theorem~\ref{thm:map-finite} to factorization categories.
\begin{theorem}\label{prop:factorization}
	Assume that $\CC$ and $\modu A$ are representation-finite.
	If $\f_A(\mu;G,F)$ satisfies property~$(P)$,
	then $\f_A(\mu;G,F)$ is representation-finite.
\end{theorem}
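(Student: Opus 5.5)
The plan is to pass to the reduced category, split it into two comma categories by Lemma~\ref{lem:factorization-comma}, and apply Theorem~\ref{thm:map-finite} twice, transporting property~$(P)$ at each step.

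\emph{Step 1: transfer $(P)$ to the reduced category.} I will show that $\f_A(0;K,C)$ satisfies property~$(P)$. Let $(N,Y,b,a)$ and $(N',Y',b',a')$ be indecomposable reduced objects with $N\cong N'$ and $Y\cong Y'$. By Lemma~\ref{lem:normal-form}, they correspond to indecomposable objects of $\f_A(\mu;G,F)$ with the same $\CC$-components and with middle terms $I(N)\oplus Y\cong I(N')\oplus Y'$. Property~$(P)$ for $\f_A(\mu;G,F)$ makes these objects isomorphic. Injectivity of the bijection in Lemma~\ref{lem:normal-form} then makes the reduced objects isomorphic. By the same lemma, it suffices to prove that $\f_A(0;K,C)$ is representation-finite.

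\emph{Step 2: verify the hypothesis of Lemma~\ref{lem:factorization-comma}.} Suppose some indecomposable $T$ has $K(T)\ne0\ne C(T)$, and let $S=D$ be the simple $A$-module. Choose a nonzero $D$-linear map $b_0:K(T)\to S$ and a nonzero $D$-linear map $a_0:S\to C(T)$. The objects $(T,S,b_0,0)$ and $(T,S,0,a_0)$ both lie in $\f_A(0;K,C)$, since each composite is zero. They are indecomposable by Remark~\ref{rem:factorization-basic} and have the same components. However, they are not isomorphic: an isomorphism $(f,g)$ would satisfy $g\circ b_0=0\circ K(f)=0$ with $g$ invertible, which forces $b_0=0$. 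This contradicts Step~1. Hence $K(T)=0$ or $C(T)=0$ for every indecomposable $T$, and Lemma~\ref{lem:factorization-comma} applies.

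\emph{Step 3: the first comma category.} Apply Theorem~\ref{thm:map-finite} to $\mathcal A=\m(K|_{\CC_1},\soc)$. Both $\CC_1$ and $\modu A$ are Hom-finite, Krull--Schmidt and representation-finite. For the uniqueness hypothesis, let $Z=(T_1,Y,b)$ and $Z'=(T_1',Y',b')$ be indecomposable in $\mathcal A$ with isomorphic components. Then $(Z,0,0)$ and $(Z',0,0)$ are indecomposable in $\m(Q,C|_{\CC_2})$, because their endomorphism rings agree with those of $Z$ and $Z'$. Under $\Theta$ they go to indecomposable reduced objects $(T_1,Y,b,0)$ and $(T_1',Y',b',0)$ with isomorphic components. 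Step~1 makes these images isomorphic, and injectivity of $\Theta$ on isomorphism classes gives $Z\cong Z'$. Thus $\mathcal A$ is representation-finite.

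\emph{Step 4: the second comma category.} Apply Theorem~\ref{thm:map-finite} to $\m(Q,C|_{\CC_2})$, whose two component categories $\mathcal A$ and $\CC_2$ are now both representation-finite. Let two indecomposable objects have components $Z\cong Z'$ in $\mathcal A$ and $T_2\cong T_2'$. Their $\Theta$-images are indecomposable with $\CC$-components $T_1\oplus T_2\cong T_1'\oplus T_2'$ and isomorphic $A$-components $Y\cong Y'$. Step~1 makes the images isomorphic, and injectivity of $\Theta$ makes the original objects isomorphic. Theorem~\ref{thm:map-finite} therefore gives finiteness of $\m(Q,C|_{\CC_2})$. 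The bijection of Lemma~\ref{lem:factorization-comma} then transfers finiteness to $\f_A(0;K,C)$, and Lemma~\ref{lem:normal-form} transfers it to $\f_A(\mu;G,F)$.

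The step I expect to be most delicate is Step~2, the dichotomy $K(T)=0$ or $C(T)=0$. It is the only place where $(P)$ is used to rule out a configuration rather than simply being transported along a bijection. I will check carefully that the two test objects are genuinely non-isomorphic even when $f\in\End_\CC(T)$ is an arbitrary unit, that is, that $K(f)$ being invertible plays no role. The argument above suggests this holds, since the relation $g\circ b_0=0$ does not involve $K(f)$ at all.
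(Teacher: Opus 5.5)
Your proposal is correct and follows the paper's proof step for step. Property~$(P)$ is transferred to $\f_A(0;K,C)$ via Lemma~\ref{lem:normal-form}, and the test objects $(T,D,b,0)$ and $(T,D,0,a)$ force $K(T)=0$ or $C(T)=0$ exactly as you argue, since only $g\circ b=0$ with $g$ invertible is used. Theorem~\ref{thm:map-finite} is then applied first to $\m(K|_{\CC_1},\soc)$ and then to $\m(Q,C|_{\CC_2})$, with $(P)$ transported each time through $\Theta$ and its injectivity on isomorphism classes.
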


\begin{proof}
	By Lemma~\ref{lem:normal-form}, it suffices to show that
	$\f=\f_A(0;K,C)$ is representation-finite.
	In fact, the category $\f$ also satisfies property~$(P)$.
	Suppose that two indecomposable objects of $\f$
	have isomorphic components $N\cong N'$ and $Y\cong Y'$.
	Since $I(N)\cong I(N')$, their reconstructions have
	isomorphic middle terms
	$I(N)\oplus Y\cong I(N')\oplus Y'$.
	The reconstructions are isomorphic by property~$(P)$ for
	$\f_A(\mu;G,F)$, so Lemma~\ref{lem:normal-form} shows that
	the two reduced objects are isomorphic.
	We work in $\f$ for the rest of the proof.
	
	We claim that $K(T)=0$ or $C(T)=0$ for every indecomposable
	$T\in\CC$. Otherwise, choose nonzero $D$-linear maps
	$b:K(T)\to D$ and $a:D\to C(T)$. Since $T$ and $D$ are indecomposable in $\CC$ and
	$\modu A$, respectively, and $b,a$ are nonzero, the objects
	$$
	(T,D,b,0),\qquad (T,D,0,a)
	$$
	are indecomposable by Remark~\ref{rem:factorization-basic}.
	Property~$(P)$ gives an isomorphism between them, whose
	middle component $g:D\to D$ is invertible and satisfies
	$g\circ b=0$. This contradicts $b\ne0$.
	
	Let $\CC_1,\CC_2,\mathcal A,Q$ and $\Theta$ be as in
	Lemma~\ref{lem:factorization-comma}.
	Both $\CC_1$ and $\CC_2$ are representation-finite.
	For an indecomposable $Z=(T_1,Y,b)\in\mathcal A$, the object
	$(Z,0,0)$ is indecomposable in $\m(Q,C|_{\CC_2})$.
	By the lemma, $\Theta(Z,0,0)$ is therefore indecomposable
	in $\f$, with components $T_1$ and $Y$.
	If two such objects $Z,Z'$ have isomorphic components,
	property~$(P)$ gives
	$\Theta(Z,0,0)\cong\Theta(Z',0,0)$, so $Z\cong Z'$ by
	Lemma~\ref{lem:factorization-comma}. Since $\CC_1$ and $\modu A$ are
	representation-finite, Theorem~\ref{thm:map-finite} shows
	that $\mathcal A$ is representation-finite.
	
	Now let $U=(Z,T_2,\overline a)$ and
	$U'=(Z',T_2',\overline a')$ be indecomposable objects of
	$\m(Q,C|_{\CC_2})$ with $Z\cong Z'$ and $T_2\cong T_2'$.
	Writing $Z=(T_1,Y,b)$ and $Z'=(T_1',Y',b')$, we have
	$T_1\cong T_1'$ and $Y\cong Y'$.
	Thus $\Theta(U)$ and $\Theta(U')$ are indecomposable objects
	of $\f$ with isomorphic components. By property~$(P)$, $\Theta(U)\cong\Theta(U')$.
	Hence $U\cong U'$ by Lemma~\ref{lem:factorization-comma}.
	Since $\mathcal A$ and $\CC_2$ are representation-finite,
	Theorem~\ref{thm:map-finite} shows that
	$\m(Q,C|_{\CC_2})$ is representation-finite.
	Lemma~\ref{lem:factorization-comma} now implies that $\f$
	is representation-finite.
\end{proof}

\section{Composition factor algebras and finite representation type}\label{sec:structure}

\subsection{Local composition factor algebras and matrix decompositions}
\begin{definition}\label{def:composition-factor-algebra}
An Artin algebra $\Lambda$ is called a \emph{composition factor algebra}
if, for any two indecomposable modules $M,N\in\modu\Lambda$, the equalities
$$
[M:S]=[N:S]\qquad\text{for every simple }\Lambda\text{-module }S
$$
imply $M\cong N$. Here $[M:S]$ denotes the multiplicity of $S$
in a composition series of $M$.
\end{definition}
\begin{remark}\label{rem:invariance}
	The class of composition factor algebras is closed under Morita
	equivalence and passage to opposite algebras. 
	Quotients of composition factor algebras are composition factor algebras
	\cite[Proposition~2.8]{Blasco}. For a two-sided ideal $I$ of $\Lambda$, restriction of scalars along
	$\Lambda\to\Lambda/I$ identifies $\modu(\Lambda/I)$ with the full
	subcategory of $\modu\Lambda$ consisting of modules annihilated by $I$. This identification
	preserves submodules, composition series and direct-sum
	decompositions.
\end{remark}

We begin with an elementary consequence of the definition.

\begin{lemma}\label{lem:same-simple-ends}
Let $\Lambda$ be a composition factor algebra. If
$M\in\modu \Lambda$ has simple top and simple socle, both isomorphic
to $S$, then $M$ is uniserial and every composition factor of
$M$ is isomorphic to $S$.
\end{lemma}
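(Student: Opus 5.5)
The plan is to argue by induction on the length $n=\len_\Lambda(M)$, comparing two modules of length $n-1$ obtained from $M$: the radical $\rad M$ and the quotient $M/\soc M$. If $n=1$, then $M\cong S$ and there is nothing to prove, so assume $n\ge2$. Since $\topm M\cong S$ is simple, Fact~\ref{fact:finite-length}(2) shows that $\rad M$ is the unique maximal submodule of $M$. It is nonzero because $n\ge2$. Since $\soc M$ is simple and $M$ is not simple, $\soc M\subseteq\rad M$.

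The key observation concerns the composition factors of these two modules. For every simple $T$,
$$
[\rad M:T]=[M:T]-[S:T]=[M/\soc M:T],
$$
because $M/\rad M\cong S\cong\soc M$. Both modules are indecomposable, for the following reasons.
\begin{itemize}
\item The socle of $\rad M$ is $\soc M\cap\rad M=\soc M$. By Fact~\ref{fact:finite-length}(1), this is nonzero, and it is simple, isomorphic to $S$. Hence $\rad M$ has simple socle and is indecomposable.
\item $M/\soc M$ is a nonzero quotient of $M$. Its top is a nonzero quotient of $\topm M\cong S$, hence equal to $S$. So $M/\soc M$ has simple top and is indecomposable.
\end{itemize}
The composition factor property of $\Lambda$ therefore gives $\rad M\cong M/\soc M$.

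Transporting structure across this isomorphism, $\rad M$ has simple top isomorphic to $S$, inherited from $M/\soc M$. It also has simple socle isomorphic to $S$, as computed above. Its length is $n-1$. By induction, $\rad M$ is uniserial and all its composition factors are isomorphic to $S$.

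It remains to deduce the statement for $M$. Every proper submodule of $M$ lies in the unique maximal submodule $\rad M$. Hence the submodules of $M$ are $M$ together with the submodules of $\rad M$, and these form a chain. So $M$ is uniserial. Its composition factors are $S=\topm M$ together with those of $\rad M$, all of which are isomorphic to $S$.

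I do not expect a serious obstacle. The only points needing care are the indecomposability of $\rad M$ and of $M/\soc M$, so that the composition factor hypothesis applies to them, and the inclusion $\soc M\subseteq\rad M$ for $n\ge2$. Both follow from Fact~\ref{fact:finite-length}.
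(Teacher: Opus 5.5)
Your proposal is correct and follows the paper's proof essentially step for step. Both argue by induction on length, recognize $\rad M$ (simple socle) and $M/\soc M$ (simple top) as indecomposable modules with the same composition factors, and use the composition factor hypothesis to get $\rad M\cong M/\soc M$. Both then deduce that $M$ is uniserial because every proper submodule lies in the uniserial module $\rad M$.
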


\begin{proof}
We argue by induction on $\len_\Lambda(M)$. The assertion is clear
when $M$ is simple. Otherwise, the simple top makes $\rad M$
the unique maximal submodule, so every proper submodule of
$M$, including $\soc M$, lies in $\rad M$. Consequently,
$$
\soc(\rad M)=\soc M\cong S,
\qquad \topm(M/\soc M)\cong M/\rad M\cong S.
$$
Thus $\rad M$ and $M/\soc M$ are indecomposable by
Fact~\ref{fact:finite-length}.   Both have the composition
factors of $M$ with one copy of $S$ removed. Since $\Lambda$ is
a composition factor algebra, $\rad M\cong M/\soc M$.
In particular, $\rad M$ has simple top and socle, both
isomorphic to $S$. By induction, $\rad M$ is uniserial with
all composition factors isomorphic to $S$. The same is true
of $M$: its top is $S$,   and all its proper submodules lie
in the uniserial module $\rad M$.
\end{proof}

We next consider the local case.

\begin{proposition}\label{prop:local-finite}
Let $\Lambda$ be a local composition factor algebra.
Then $\Lambda$ is Nakayama and hence representation-finite.
\end{proposition}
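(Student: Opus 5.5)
Since $\Lambda$ is local, there is only one simple module $S=\Lambda/J(\Lambda)$, up to isomorphism. The definition of a composition factor algebra then says that an indecomposable module is determined up to isomorphism by its length $\len_\Lambda(M)$. The plan is to show that the indecomposable projective ${}_\Lambda\Lambda$ and the indecomposable injective $\mathbb D(\Lambda_\Lambda)$ are uniserial. Representation-finiteness is then the standard fact that a Nakayama algebra has only finitely many indecomposables, namely quotients of indecomposable projectives. More directly, every indecomposable has length at most $\len_\Lambda(\Lambda)$, and each length admits at most one isomorphism class.

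\emph{Uniseriality of ${}_\Lambda\Lambda$.} The regular module $P={}_\Lambda\Lambda$ has simple top $S$, since $\Lambda$ is local. The main step is to show that $\soc P$ is simple. Suppose instead that $\soc P$ contains $S\oplus S$. For a simple submodule $T\subseteq\soc P$, the quotient $P/T$ has simple top and is therefore indecomposable, with length $\len(P)-1$. Alternatively, I would use the uniqueness statement directly: compare two indecomposables of the same length and derive a contradiction.

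I would argue by induction on $\len_\Lambda(P)$ applied to the quotient algebras $\Lambda/I$. These are again local composition factor algebras by Remark~\ref{rem:invariance}. Take $I=\soc({}_\Lambda\Lambda)$, a two-sided ideal, assumed nonzero and proper. By induction, $\Lambda/I$ is Nakayama, so $P/\soc P$ is uniserial. Now pick a simple $T\subseteq\soc P$. The module $P/T$ is indecomposable of length $\ell-1$, and so is the uniserial module $\rad P$ only if $\rad P$ has simple socle. Instead I compare $P/T$ with $\rad P$ via their socles.

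A cleaner route is to use Lemma~\ref{lem:same-simple-ends} as the engine. Choose a simple submodule $T\subseteq\soc P$ and a complement $T'$ with $\soc P=T\oplus T'$. The module $P/T'$ has simple top, and its socle contains $(T\oplus T')/T'\cong S$. If I can arrange that $\soc(P/T')$ is simple, then Lemma~\ref{lem:same-simple-ends} makes $P/T'$ uniserial. The trouble is that $\soc(P/T')$ may be larger than the image of $\soc P$.

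So the argument I will actually run is an induction on length, with the following main step. Let $M$ be indecomposable of maximal length $\ell$ among \emph{uniserial} modules. Then $P$ is uniserial of length $\ell$, because by induction applied to $\Lambda/\soc(\Lambda)$ the module $P/\soc P$ is uniserial. It remains to see that $\soc P$ is simple. If $\soc P\supseteq T_1\oplus T_2$, then $P/T_1$ and $P/T_2$ have simple top and the same length, so they are isomorphic. Moreover $P/(T_1\oplus T_2)$ is a quotient of the uniserial module $P/\soc P$ pattern. I would build the module
$$
E=(P\oplus P)/\{(t,-\theta(t)) : t\in T_1\},
$$
where $\theta:T_1\to T_2$ is an isomorphism; this pushout glues two copies of $P$ along their socles. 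The module $E$ is indecomposable, since the gluing is along socles and $P$ has local endomorphism ring; this needs a short check. It has length $2\ell-1$. Its top is $S^2$, whereas $P\oplus(P/T_1)$ has the same length but is decomposable. I then need a second indecomposable of length $2\ell-1$ with top of a different dimension to get a contradiction. A quotient of the type $P$ glued differently should supply one, for example $E'$ obtained by identifying $T_1$ in one copy with $T_2$ in the other. Alternatively, compare $\rad$ and $\soc$ dimensions, since isomorphic modules have isomorphic tops and socles.

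This gluing/contradiction step is the main obstacle. I expect the clean version to be: produce two indecomposables of equal length whose tops, or whose socles, have different dimensions.

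\emph{Injectives and conclusion.} Applying the same argument to $\Lambda^{\op}$, which is again a local composition factor algebra by Remark~\ref{rem:invariance}, shows that $\Lambda_\Lambda$ is uniserial. Matlis duality $\mathbb D$ then makes the indecomposable injective left module uniserial. Hence $\Lambda$ is Nakayama. Finally, every indecomposable module over a Nakayama algebra is uniserial and is a quotient of ${}_\Lambda\Lambda$, so its length is at most $\len_\Lambda(\Lambda)$. Since indecomposables are determined by their length, there are finitely many isomorphism classes.
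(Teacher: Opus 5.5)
You have a genuine gap, and it is exactly the step you flag as the main obstacle: you never prove that $\soc({}_\Lambda\Lambda)$ is simple, and none of the routes you sketch gets there. Induction over $\Lambda/\soc(\Lambda)$ only gives that $P/\soc P$ is uniserial, which says nothing about $\soc P$ itself. For $k[x,y]/(x,y)^2$, for instance, $P/\soc P$ is simple while $\soc P\cong S^2$. The sentence about a module of maximal length among uniserials does not establish anything. The isomorphism $P/T_1\cong P/T_2$ is not a contradiction by itself.

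Worse, the glued module $E$ is in general not indecomposable. Lifting an isomorphism $P/T_1\cong P/T_2$ along $P\to P/T_2$ gives right multiplication by a unit $u$ with $T_1u=T_2$. For $\theta=(\,\cdot\,u)|_{T_1}$, the automorphism $(x,y)\mapsto(x+yu^{-1},y)$ of $P\oplus P$ carries $\{(t,-tu):t\in T_1\}$ onto $0\oplus T_2$, so $E\cong P\oplus P/T_2$. Your $E'$ is the same construction. The comparison with the decomposable module $P\oplus P/T_1$ is irrelevant, since the hypothesis only constrains indecomposable modules.

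The missing idea is to compare ${}_\Lambda\Lambda$ with the injective module $\mathbb D(\Lambda_\Lambda)$.
\begin{itemize}
\item Because $\Lambda$ is local, $\Lambda_\Lambda$ is indecomposable, and hence so is $\mathbb D(\Lambda_\Lambda)$.
\item Both left modules have $R$-length $\len_R(\Lambda)$. They therefore have the same $\Lambda$-length, since $\len_R(M)=\len_\Lambda(M)\len_R(S)$ when $S$ is the only simple module.
\item So they have the same composition factors and are isomorphic.
\end{itemize}
Thus ${}_\Lambda\Lambda$ is injective, and $\soc({}_\Lambda\Lambda)\cong\mathbb D(\topm\Lambda_\Lambda)$ is simple, isomorphic to $S$. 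Lemma~\ref{lem:same-simple-ends} then makes ${}_\Lambda\Lambda$ uniserial. Since ${}_\Lambda\Lambda$ is both the unique indecomposable projective and the unique indecomposable injective left module, $\Lambda$ is Nakayama with no passage to $\Lambda^{\op}$. Your closing count for representation-finiteness is fine once this is in place.
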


\begin{proof}
We write $S=\Lambda/J(\Lambda)$. Since $S$ is the unique simple left
$\Lambda$-module up to isomorphism,   additivity of $R$-length gives
$$
\len_R(M)=\len_\Lambda(M)\len_R(S)\qquad(M\in\modu \Lambda).
$$
The regular modules ${}_\Lambda\Lambda$ and $\Lambda_\Lambda$
are indecomposable because $\Lambda$ is local. Matlis duality preserves indecomposability and
$R$-length,   so ${}_\Lambda \Lambda$ and $\mathbb D(\Lambda_\Lambda)$ are
indecomposable left modules of the same composition length.
Since $\Lambda$ is a composition factor algebra, we obtain
$$
{}_\Lambda \Lambda\cong\mathbb D(\Lambda_\Lambda).
$$
Thus $\Lambda$ is self-injective, with
$\topm({}_\Lambda \Lambda)\cong S\cong\soc({}_\Lambda \Lambda)$.
By Lemma~\ref{lem:same-simple-ends}, the module ${}_\Lambda \Lambda$ is uniserial.
Since $\Lambda$ is local and self-injective, it is Nakayama, and hence representation-finite
\cite[VI, Theorem~2.1(b)]{ARS}.
\end{proof}

\begin{notation}\label{not:corner}
For a basic Artin algebra $\Lambda$ and a primitive idempotent $e$, set
$e'=1-e$. The decomposition by these idempotents gives the
natural algebra isomorphism
\begin{equation}\label{eq:localalgebra}
 \Lambda\cong\begin{pmatrix}A&U\\V&B\end{pmatrix},
 \qquad A=e\Lambda e,\quad B=e'\Lambda e',\quad U=e\Lambda e',\quad V=e'\Lambda e.
\end{equation}
The modules $U$ and $V$ are
respectively an $A$--$B$-bimodule and a
$B$--$A$-bimodule.
\end{notation}

Since $\Lambda e$ is indecomposable, its endomorphism ring
$\End_\Lambda(\Lambda e)\cong A^{\op}$ is local \cite[II, \S2]{ARS}.
Thus the corner algebra $A=e\Lambda e$ is local.

Write $S_e=\Lambda e/J(\Lambda)e$ for the simple module corresponding to $e$.
Since $\Lambda$ is basic, $eS_e=S_e$ and $e'S_e=0$.
As a left $A$-module, $S_e$ is naturally isomorphic to $A/J(A)$.
The other simple $\Lambda$-modules are annihilated by $e$ and restrict
to pairwise nonisomorphic simple $B$-modules. Restriction by an
idempotent is exact. Hence, for every $M\in\modu \Lambda$,
\begin{equation}\label{eq:corner-multiplicities}
 [M:S_e]=\len_A(eM),\qquad
 [M:S]=[e'M:e'S]\quad(S\text{ simple},\ S\not\cong S_e).
\end{equation}

The following lemma provides the relations needed for the
factorization description in the next subsection.

\begin{lemma}\label{prop:structure}
Let $\Lambda$ be a basic composition factor algebra, and let $e$
be a primitive idempotent. Use the notation in
\eqref{eq:localalgebra}.   Then we have
\begin{enumerate}
\item[\textup{(1)}] $UV=0$.
\item[\textup{(2)}] $J(A)U=0=VJ(A)$.
\item[\textup{(3)}] $U\otimes_BV=0$.
\end{enumerate}
\end{lemma}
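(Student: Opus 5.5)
The three assertions will be proved one at a time. The only tools are Lemma~\ref{lem:same-simple-ends}, Proposition~\ref{prop:local-finite}, and the fact from Remark~\ref{rem:invariance} that quotients of $\Lambda$ are again composition factor algebras. Throughout, $\Lambda$ is basic and $e$ is primitive, so $U=e\Lambda e'$ and $V=e'\Lambda e$ lie in $J(\Lambda)$. In particular $UV\subseteq J(A)$. The general pattern is to produce an indecomposable $\Lambda$-module whose top and socle are both $S_e$ but which has a composition factor not isomorphic to $S_e$. Such a module contradicts Lemma~\ref{lem:same-simple-ends}.

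For (1), suppose $UV\neq0$ and choose $u\in U$, $v\in V$ with $z=uv\ne0$. Since $A$ is Nakayama by Proposition~\ref{prop:local-finite}, applied to the quotient $A$ of $\Lambda$, we may first replace $z$ by a nonzero element of $UV$ with $J(A)z=0$. Concretely, we take an element of $UV$ lying in the smallest nonzero power of $J(A)$ that meets $UV$, and then multiply by suitable elements of $J(A)$ if necessary. Next choose a submodule $L\subseteq\Lambda e$ maximal with $z\notin L$, and set $M=\Lambda e/L$. Then:
\begin{itemize}
\item $M$ has top $S_e$;
\item $\soc M$ is simple and contained in $\Lambda\bar z$;
\item the condition $J(A)z=0$, together with $ez=z$, should force $\soc M=\Lambda\bar z\cong S_e$.
\end{itemize}
This last point is the delicate step. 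We intend to check it by showing that $J(\Lambda)z\subseteq L$ can be arranged, possibly after enlarging $L$ inside the set of submodules avoiding $z$. On the other hand, $\bar v=e'\bar v$ is nonzero in $M$, since otherwise $z=u v\in L$. Hence $e'M\ne0$, and $M$ has a composition factor other than $S_e$ by \eqref{eq:corner-multiplicities}. This contradicts Lemma~\ref{lem:same-simple-ends}.

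For (2), by passing to $\Lambda^{\op}$ (Remark~\ref{rem:invariance}) it suffices to prove $J(A)U=0$. Suppose $aUe_j\ne0$ for some $a\in J(A)$ and some primitive $e_j\le e'$. Working in a suitable quotient of $\Lambda e_j$, we produce two indecomposable modules with top $S_j$ and exactly two further composition factors $S_e$:
\begin{itemize}
\item a uniserial module $M_1$, in which the $S_e$-part is $A\bar u$ with $a\bar u\neq0$;
\item a module $M_2$ of the same length, in which the two copies of $S_e$ lie side by side in the socle.
\end{itemize}
The module $M_2$ is obtained from $\Lambda e_j$ by killing $J(A)U$ together with a complement in $e'$-degrees. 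It is legitimate only if $\dim_D(U e_j/J(A)Ue_j)\ge2$. If instead that dimension is $1$, we compare $M_1$ with the dual construction on $V$, or with $\mathbb D$ of a module of the same shape. The modules $M_1$ and $M_2$ have the same composition factors but different socles, which contradicts the composition factor property. Building the second module $M_2$ is the main obstacle, and we would try first to obtain it from the bimodule $Ue_j$ via Lemma~\ref{lem:transitive}: isomorphisms between such modules give a transitive unit action on $Ue_j\setminus\{0\}$, which forces $J(A)Ue_j=0$.

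For (3), consider the induced $\Lambda$-module $P=\Lambda e'\otimes_B V$. It is generated by its $e'$-part $V$, and its $e$-part is $U\otimes_B V$. If $U\otimes_B V\ne0$, then by (2) its top as an $A$-module is semisimple. Take a quotient of $P$ with simple socle $S_e$ coming from $U\otimes_B V$ and top generated in $V$. By (1) the multiplication map $U\otimes_B V\to UV$ is zero, so this module is not a quotient of $\Lambda e$. We then compare it with a module having the same composition factors, built from $\Lambda e$ and the socle of $V$, which does factor through $\Lambda e$; the composition factor property would force the two to be isomorphic, which is impossible. Hence $U\otimes_B V=0$.
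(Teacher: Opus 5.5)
\textbf{Part (2) has a genuine gap.} The modules you build cannot close it. A module with top $S_j$ is a quotient of $\Lambda e_j$. If its $e'$-part is just $S_j$, its $e$-part is a quotient of $U\otimes_BS_j\cong Ue_j/UJ(B)e_j$. So your uniserial $M_1$ needs $J(A)Ue_j\not\subseteq UJ(B)e_j$, which you never check. Your $M_2$ needs $\dim_D(Ue_j/J(A)Ue_j)\ge2$, and the remaining case is left to an unspecified ``dual construction''.

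All of this fails for the following basic algebra $\Lambda$, with $e=e_1$: vertices $1,2$, loops $\alpha$ at $1$ and $\gamma$ at $2$, an arrow $\beta\colon2\to1$, and relations $\alpha\beta=\beta\gamma$, $\alpha^2=0=\gamma^2$.
\begin{itemize}
\item Here $V=0$, $U=k\beta\oplus k\alpha\beta$ and $J(A)U=k\alpha\beta=UJ(B)$, so $\dim_D(U/J(A)U)=1$ and $M_2$ is unavailable.
\item A module with top $S_2$ and factors $S_2,S_1,S_1$ would be $\Lambda e_2/L$ with $e_2L=k\gamma$. This forces $\alpha\beta=\beta\gamma\in L$, so no such module exists, and $M_1$ is unavailable too.
\item Since $V=0$, $e_1M$ is a submodule of every module $M$. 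Hence no module with socle $S_2$ has an $S_1$ factor, which rules out the dual shape as well.
\end{itemize}
Yet $J(A)U\neq0$, and indeed $\Lambda$ is not a composition factor algebra. Take $\Lambda e_2$ (where $\beta$ acts by $1$) and the module with $e_1M=e_2M=k[x]/(x^2)$ on which $\alpha,\gamma,\beta$ all act by $x$. These are nonisomorphic indecomposables, both with factors $2S_1+2S_2$. Any witnesses here must have non-simple $e'$-part, and your scheme never produces such modules.

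\textbf{The missing idea is the paper's.} By (1), $V$ is a $\Lambda$-submodule of $\Lambda e$, so $\Lambda e/V\cong A$ is a $\Lambda$-module. Take an indecomposable summand $Y$ of ${}_BV$ and $0\neq q\in\Hom_B(V,Y)$. Glue $A$ and $Y$ into $M_q=A\oplus Y$, with $v\in V$ acting by $x\mapsto q(vx)$ and $U$ acting by zero. This is legitimate since $UV=0$ and $VUY\subseteq VUV=0$. The $M_q$ are indecomposable with the same composition factors. Take $n$ maximal with $VJ(A)^n\neq0$, let $p$ be the projection, and set $h=p(-\cdot t)$ with $t\in J(A)^n$. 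Then $M_p\not\cong M_h$, since $VJ(A)$ annihilates $M_h$ but not $M_p$. Passing to $\Lambda^{\op}$ gives $J(A)U=0$. Your Lemma~\ref{lem:transitive} idea can also be rescued, but only on the $\End_B(Y)$--$A^{\op}$-bimodule $\Hom_B(V,Y)$, not on $Ue_j$. One checks that $M_q\cong M_{q'}$ if and only if $g\circ q=q'\circ(-\cdot a)$ for units $g\in\End_B(Y)$ and $a\in A$.

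\textbf{Smaller point on (1).} Drop the replacement of $z$. It is circular: the projection $\Lambda\to A$ is a ring map only once $UV=0$ is known, and a priori only $A/UV\cong\Lambda/\Lambda e'\Lambda$ is a quotient. It is also unnecessary. Maximality of $L$ puts $\bar z$ in every nonzero submodule of $M$, so $\soc M=\Lambda\bar z$ is simple, and $e\bar z\neq0$ identifies it with $S_e$. What remains is exactly the paper's argument.

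\textbf{Smaller point on (3).} The comparison module must share the $e'$-part of your quotient $\overline P=P/L$, so ``the socle of $V$'' is the wrong choice. Since $V\cdot eP=0$ (as $VUV=0$) and $J(A)(U\otimes_BV)=0$ by (2), $e\overline P$ is a nonzero semisimple submodule. Hence it equals $\soc\overline P$, and $e\overline P\cong D$. Write $e'\overline P=V/K$. The right partner is $\Lambda e/(J(A)\oplus K)$, which is a module by (1) and (2). An isomorphism would force $\overline P=e\overline P$, contradicting that $\overline P$ is generated by $e'\overline P\neq0$. The paper instead compares $M_p$ with the module $N_\lambda$ on $A\oplus Y$ in which $U$ acts through some $0\neq\lambda\colon U\otimes_BY\to A$.
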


\begin{proof}
(1) Suppose that $0\ne c\in UV\subseteq \Lambda e$. Choose a
left $\Lambda$-submodule $N\subseteq \Lambda e$ maximal among those not
containing $c$, and let $\overline c$ be the image of $c$
in $M=\Lambda e/N$. By maximality, every nonzero submodule of $M$
contains $\overline c$, and hence contains $\Lambda\overline c$.
Choose a simple submodule $T\subseteq M$. Then
$0\ne\Lambda\overline c\subseteq T$, so $T=\Lambda\overline c$.
Thus $\soc M=\Lambda\overline c$ is simple, and
$e\overline c=\overline c\ne0$ gives $\soc M\cong S_e$.
The module $M$ also has top $S_e$, being a nonzero quotient
of the projective module $\Lambda e$ with simple top. By
Lemma~\ref{lem:same-simple-ends}, all its composition factors
are isomorphic to $S_e$.

Exactness of $e'(-)$ now gives $e'M=0$. Thus $V=e'\Lambda e$ is
contained in $N$, and the $\Lambda$-submodule $N$ also contains
$UV$. This contradicts $c\notin N$, proving $UV=0$.

(2) By (1), $UV=0$, so $V$ is a left $\Lambda$-submodule of $\Lambda e$.
The canonical $R$-module isomorphism $\Lambda e/V\cong A$ gives
$A$ a left $\Lambda$-module structure. For
$a=\begin{pmatrix}r&u\\v&b\end{pmatrix}\in \Lambda$, these actions are
$$
a\cdot w=bw\quad(w\in V),\qquad
a\cdot x=rx\quad(x\in A).
$$
In particular, every left $B$-submodule of $V$ inherits a left
$\Lambda$-module structure.

For convenience, we denote $J(A)$ by $J$.
Let $Y$ be an indecomposable direct
summand of the left $B$-module $V$. For
$0\ne q\in\Hom_B(V,Y)$, define a $\Lambda$-action on the
$R$-module $M_q=A\oplus Y$ by
$$
\begin{pmatrix}r&u\\v&b\end{pmatrix}
\begin{pmatrix}x\\y\end{pmatrix}
=\begin{pmatrix}rx\\q(vx)+by\end{pmatrix}.
$$
This defines a $\Lambda$-module structure, since $q$ is a homomorphism
of $B$-modules and, by (1),
$$
UV=0,\qquad (VU)Y\subseteq V(UV)=0.
$$
We claim that $M_q$ is indecomposable. Suppose that
$M_q=M_1\oplus M_2$ with both summands nonzero. Since
$eM_q\cong{}_{A}A$ is indecomposable, we may assume
that $eM_2=0$. Then $e'M_2\ne0$, so the indecomposability of
$e'M_q\cong Y$ gives $e'M_1=0$. It follows that
$$
VM_1\subseteq e'M_1=0,
\qquad VM_2=VeM_2=0.
$$
Since $q\ne0$, choose $v\in V$ with $q(v)\ne0$. Then
$v(e,0)=(0,q(v))\ne0$, contradicting $VM_q=0$.

Moreover, the inclusion of $Y$ and the projection onto $A$
give a short exact sequence of $\Lambda$-modules
$$
0\longrightarrow Y\longrightarrow M_q
\longrightarrow A\longrightarrow0.
$$
Thus the modules $M_q$ have the same composition factors for
fixed $Y$.

Suppose that $VJ\ne0$, and choose $n\ge1$ maximal with
$VJ^n\ne0$.   Such an $n$ exists because $J$ is nilpotent.
Choose an indecomposable direct summand $Y$ of the left
$B$-module $V$ whose projection $p:V\to Y$ satisfies
$p(VJ^n)\ne0$.
There exists $t\in J^n$ such that the homomorphism
$$
h:V\longrightarrow Y,\qquad h(v)=p(vt)
$$
is nonzero. Since $VJ^{n+1}=0$, we have $h(VJ)=0$, whereas
$p(VJ)\ne0$. The action formula therefore gives
$$
(VJ)M_h=0,\qquad (VJ)M_p\ne0.
$$
Consequently, $M_h$ and $M_p$ are nonisomorphic indecomposable
modules with the same composition factors. This contradicts the
assumption that $\Lambda$ is a composition factor algebra.
Hence $VJ(A)=0$. By Remark~\ref{rem:invariance}, $\Lambda^{\op}$
is also a composition factor algebra. Applying the same argument
to $\Lambda^{\op}$ gives
$J(A)U=0$.

(3) Suppose that $U\otimes_BV\ne0$. By additivity of tensor
product,   there is an indecomposable direct summand $Y$ of the
left $B$-module $V$ with $U\otimes_BY\ne0$. Let $p:V\to Y$ be its projection.
Since $A$ is local,   a simple quotient of $U\otimes_BY$
is isomorphic to a simple submodule of ${}_AA$.
Composing the quotient map with such an embedding gives
$$
0\ne\lambda\in\Hom_A(U\otimes_BY,A).
$$
Define a $\Lambda$-module $N_\lambda$ on $A\oplus Y$ by
$$
\begin{pmatrix}r&u\\v&b\end{pmatrix}\cdot
\begin{pmatrix}x\\y\end{pmatrix}
=\begin{pmatrix}rx+\lambda(u\otimes y)\\by\end{pmatrix}.
$$
The relations $UV=0$ and $(VU)Y=0$ ensure that this is a
$\Lambda$-module structure. The same decomposition argument as in
(2) shows that $N_\lambda$ is indecomposable: a nontrivial
decomposition would place its two indecomposable components
$A$ and $Y$ in different summands, forcing the action
of $U$ to be zero, contrary to $\lambda\ne0$.

The module $M_p$ from (2) is also indecomposable. The exact
sequence there and the exact sequence
$$
0\longrightarrow A\longrightarrow N_\lambda
\longrightarrow Y\longrightarrow0
$$
show that $M_p$ and $N_\lambda$ have the same composition factors.
The action of $U$ is zero on $M_p$ and nonzero on
$N_\lambda$, since $\lambda\ne0$. Hence these modules are
not isomorphic. This contradicts the assumption that $\Lambda$
is a composition factor algebra and proves $U\otimes_BV=0$.
\end{proof}

\subsection{Modules as factorizations}
Let $\Lambda$ be a basic composition factor algebra and
$e$ a primitive idempotent. Set $e'=1-e$.
As in \eqref{eq:localalgebra}, write
$$
A=e\Lambda e,\qquad B=e'\Lambda e',\qquad
U=e\Lambda e',\qquad V=e'\Lambda e,
$$
and set $D=A/J(A)$.

For $N\in\modu B$, set
\begin{equation}\label{eq:mudef}
	L(N)=U\otimes_BN,\qquad H(N)=\Hom_B(V,N).
\end{equation}
The bimodule structures give $L(N)$ and $H(N)$ the left
$A$-actions
$$
r(u\otimes n)=(ru)\otimes n,\qquad (rh)(v)=h(vr).
$$
Since $J(A)U=0=VJ(A)$ by Lemma~\ref{prop:structure}(2),
$L(N)$ and $H(N)$ are annihilated by $J(A)$ and hence
are naturally $D$-modules.
For a $B$-module homomorphism $f:N\to N'$, define
$$
L(f)=1_U\otimes f,\qquad H(f)(h)=f\circ h.
$$
These formulas define additive functors
$$
L,H:\modu B\longrightarrow\modu D.
$$

For each $N\in\modu B$, multiplication in $\Lambda$ induces a map
$$
\mu_N:L(N)\longrightarrow H(N),\qquad
\mu_N(u\otimes n)(v)=(vu)n.
$$
Associativity in $\Lambda$ makes this a well-defined
homomorphism of $D$-modules.
For a $B$-module homomorphism $f:N\to N'$,
$B$-linearity gives
$H(f)\circ\mu_N=\mu_{N'}\circ L(f)$.
Thus the maps $\mu_N$ define a natural transformation
$\mu:L\to H$.
We consider the factorization category $\f_A(\mu;L,H)$.

We now define a functor
$\Phi:\modu\Lambda\to\f_A(\mu;L,H)$.
For $M\in\modu\Lambda$, the $\Lambda$-action induces
homomorphisms of $A$-modules
$$
U\otimes_B e'M\xrightarrow{\beta_M}eM
\xrightarrow{\alpha_M}\Hom_B(V,e'M),
$$
where $\beta_M(u\otimes e'm)=um$ and
$\alpha_M(em)(v)=vm$.
These maps satisfy $\alpha_M\circ\beta_M=\mu_{e'M}$.
On objects and $\Lambda$-module homomorphisms
$\varphi:M\to M'$, we set
$$
\Phi(M)=(e'M,eM,\beta_M,\alpha_M),\qquad
\Phi(\varphi)=(\varphi|_{e'M},\varphi|_{eM}).
$$
The $\Lambda$-linearity of $\varphi$ makes the two squares
in Definition~\ref{def:factorization} commute.
Thus these assignments define a functor.

The following proposition is a special case of the construction of
Dlab and Ringel
\cite[\S1, Proposition~1 and the following remark]{DR}.
We include a proof for completeness.

\begin{proposition}\label{prop:module-diagrams}
The functor $\Phi:\modu \Lambda\to\f_A(\mu;L,H)$ is an
equivalence of categories.
\end{proposition}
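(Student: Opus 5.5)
We will show that $\Phi$ is fully faithful and essentially surjective, by building an explicit quasi-inverse $\Psi:\f_A(\mu;L,H)\to\modu\Lambda$. As an $R$-module, $M=eM\oplus e'M$, and the $\Lambda$-action is determined by four pieces of data. $A$ acts on $eM$ and $B$ acts on $e'M$. An element $u\in U$ acts as $e'M\to eM$, $n\mapsto\beta_M(u\otimes n)$. An element $v\in V$ acts as $eM\to e'M$, $x\mapsto\alpha_M(x)(v)$. Equivalently, by the tensor--Hom adjunctions, these are an $A$-linear map $U\otimes_Be'M\to eM$ and an $A$-linear map $eM\to\Hom_B(V,e'M)$.

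\textbf{Step 1: construction of $\Psi$.} Given $(N,X,\beta,\alpha)$, put $M=X\oplus N$ and let $\begin{pmatrix}r&u\\v&b\end{pmatrix}$ act by
$$
\begin{pmatrix}x\\n\end{pmatrix}\longmapsto
\begin{pmatrix}rx+\beta(u\otimes n)\\ \alpha(x)(v)+bn\end{pmatrix}.
$$
We must check associativity, i.e.\ that this gives an algebra map $\Lambda\to\End_R(M)$. The action is additive and unital, so it suffices to check the multiplication rules on the four matrix blocks, namely the products $UV\subseteq A$, $VU\subseteq B$, $AU\subseteq U$, and so on.
\begin{itemize}
\item The bimodule compatibilities ($A$-linearity of $\beta,\alpha$, $B$-balancedness of the tensor, and $B$-linearity of elements of $\Hom_B(V,N)$) handle every product except those passing through the opposite corner.
\item For $u\in U$, $v\in V$, the product $vu\in VU\subseteq B$ acts on $n$ by $(vu)n$. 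The composite action of $u$ then $v$ is $\alpha(\beta(u\otimes n))(v)=\mu_N(u\otimes n)(v)=(vu)n$, exactly because $\alpha\circ\beta=\mu_N$. This is the key point of the factorization condition.
\item For $uv\in UV$, Lemma~\ref{prop:structure}(1) gives $uv=0$. So we need $\beta(u\otimes\alpha(x)(v))=0$. This follows from the adjunction $\Hom_A(U\otimes_B H(N),X)$ together with $U\otimes_BV=0$ from Lemma~\ref{prop:structure}(3). Concretely, $u\otimes\alpha(x)(v)$ is the image of $u\otimes v$ under $1_U\otimes\alpha(x):U\otimes_BV\to U\otimes_BN$, and $U\otimes_BV=0$.
\end{itemize}
On morphisms, $\Psi(f,g)=g\oplus f$. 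The two commuting squares are precisely $\Lambda$-linearity with respect to the actions of $U$ and $V$.

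\textbf{Step 2: the composites.} Compute $\Phi\Psi$ and $\Psi\Phi$.
\begin{itemize}
\item $\Phi\Psi(N,X,\beta,\alpha)=(N,X,\beta,\alpha)$ on the nose, since $e$ and $e'$ cut out $X$ and $N$ and the induced maps recover $\beta$ and $\alpha$.
\item $\Psi\Phi(M)\cong M$ via the canonical isomorphism $eM\oplus e'M\to M$, $(x,n)\mapsto x+n$. This isomorphism is natural in $M$.
\end{itemize}
Faithfulness of $\Phi$ is immediate since $M=eM\oplus e'M$. Fullness follows because a pair $(f,g)$ in the factorization category gives $\Psi(f,g)$, which is a $\Lambda$-map restricting to $(f,g)$.

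\textbf{Main obstacle.} The only nonformal point is the $UV$-compatibility in Step~1. One must see that the vanishing $UV=0$ inside $\Lambda$ is matched on the module side by the vanishing of $U$ followed by $V$ acting through the corner. This is exactly where Lemma~\ref{prop:structure}(3) enters. Everything else is routine bookkeeping, as in \cite{DR}.
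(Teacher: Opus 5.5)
Your proposal is correct and follows the paper's proof. It uses the same quasi-inverse $\Psi$ on $X\oplus N$, the $VU$-compatibility from $\alpha\circ\beta=\mu_N$, and the $UV$-compatibility from Lemma~\ref{prop:structure}(1) and (3). Your observation that $u\otimes\alpha(x)(v)$ is the image of $u\otimes v=0$ under $1_U\otimes\alpha(x)$ is a slightly more direct version of the paper's map $(U\otimes_BV)\otimes_AX\to X$, so the garbled reference to an ``adjunction'' is unnecessary. Also, $\Phi\Psi(T)=T$ holds only up to the canonical identifications $e\Psi(T)\cong X$ and $e'\Psi(T)\cong N$, not literally.
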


\begin{proof}
We construct a quasi-inverse $\Psi$. For an object
$T=(N,X,\beta,\alpha)$ of $\f_A(\mu;L,H)$,
define an action on the $R$-module $X\oplus N$ by
$$
\begin{pmatrix}r&u\\v&b\end{pmatrix}\cdot
\begin{pmatrix}x\\n\end{pmatrix}
=\begin{pmatrix}rx+\beta(u\otimes n)\\\alpha(x)(v)+bn\end{pmatrix}.
$$
The element $1_\Lambda=e+e'$ acts as the identity. To verify associativity,
it suffices to check compatibility with the products $VU$ and $UV$:
$$
\alpha(\beta(u\otimes n))(v)=(vu)n,\qquad
\beta(u\otimes\alpha(x)(v))=(uv)x.
$$
The first identity is exactly $\alpha\circ\beta=\mu_N$.
For the second, $(uv)x=0$ by Lemma~\ref{prop:structure}(1),
so it remains to show that $\beta(u\otimes\alpha(x)(v))=0$.
The $A$-linearity of $\alpha$ and $\beta$ and the $B$-linearity
of each $\alpha(x)$ give a homomorphism of left $A$-modules
$$
(U\otimes_BV)\otimes_A X\longrightarrow X,\qquad
(u\otimes v)\otimes x\longmapsto\beta(u\otimes\alpha(x)(v)).
$$
By Lemma~\ref{prop:structure}(3), we have
$(U\otimes_BV)\otimes_A X=0$, and hence
$\beta(u\otimes\alpha(x)(v))=0$.
The action formula therefore defines a $\Lambda$-module
$\Psi(T)$. For a morphism $(f,g):T\to T'$, set
$$
\Psi(f,g)=g\oplus f:X\oplus N\longrightarrow X'\oplus N'.
$$
The relations $g\circ\beta=\beta'\circ L(f)$ and
$H(f)\circ\alpha=\alpha'\circ g$ show that $g\oplus f$ is
$\Lambda$-linear. Hence these assignments define a functor $\Psi$.

For $M\in\modu \Lambda$, the natural map
$$
\Psi(\Phi(M))=eM\oplus e'M\longrightarrow M,\qquad
(x,n)\longmapsto x+n
$$
is a $\Lambda$-module isomorphism. Conversely, under the canonical
identifications $e\Psi(T)\cong X$ and $e'\Psi(T)\cong N$,
the action maps satisfy $\beta_{\Psi(T)}=\beta$ and
$\alpha_{\Psi(T)}=\alpha$.
These identifications are natural in $T$ and give
$\Phi\circ\Psi\cong1_{\f_A(\mu;L,H)}$.
Thus $\Psi$ is a quasi-inverse of $\Phi$.
\end{proof}

\subsection{Proof of Theorem~\ref{thm:main}}
\begin{proof}[Proof of Theorem~\ref{thm:main}]
By Remark~\ref{rem:invariance}, Morita equivalence allows us to
assume that $\Lambda$ is basic. We induct on $\len_R(\Lambda)$ among
basic composition factor $R$-algebras. Every quotient by a nonzero
proper two-sided ideal has smaller $R$-length and is again a basic
composition factor algebra.

If $J(\Lambda)=0$, the algebra is semisimple and representation-finite.
Otherwise $J(\Lambda)$ contains a simple left ideal $S$.   Since $\Lambda$
is basic, $\Lambda/J(\Lambda)$ is a product of division rings, so there
is a primitive idempotent $e$ with $eS=S$.
We use the notation in \eqref{eq:localalgebra} for this
idempotent and set $I=S\Lambda$.
The two-sided ideal $I$ satisfies
$$
0\ne I\subseteq J(\Lambda),\qquad eI=I.
$$
For $e'=1-e$, we have $e'I=0$, so the quotient map induces
an isomorphism
$$
B=e'\Lambda e'\cong e'(\Lambda/I)e'.
$$
We also write $e$ and $e'$ for their images in $\Lambda/I$. By induction, $\Lambda/I$ is representation-finite.

The functor
$$
(\Lambda/I)e'\otimes_B-:\modu B\longrightarrow\modu(\Lambda/I)
$$
is fully faithful: it is left adjoint to
$\Hom_{\Lambda/I}((\Lambda/I)e',-)\cong e'(-)$,
and its unit is the canonical isomorphism
$N\cong e'(\Lambda/I)e'\otimes_BN=B\otimes_BN$.
It therefore preserves indecomposability and reflects
isomorphisms, so $B$ is representation-finite.

By Lemma~\ref{prop:structure}(1), we have $UV=0$, and so the projection
$$
\Lambda\longrightarrow A,\qquad
\begin{pmatrix}a&u\\v&b\end{pmatrix}\longmapsto a
$$
is a surjective algebra homomorphism. Thus $A$ is a composition
factor algebra by Remark~\ref{rem:invariance}.
Since $A$ is local, Proposition~\ref{prop:local-finite}
shows that $\modu A$ is representation-finite.

By Proposition~\ref{prop:module-diagrams}, we have an equivalence
$$
\modu\Lambda\simeq\f_A(\mu;L,H)
$$
for $\CC=\modu B$ and the functors $L,H$ in \eqref{eq:mudef}.
Let $Z=(N,X,\beta,\alpha)$ and $Z'=(N',X',\beta',\alpha')$
be indecomposable objects of $\f_A(\mu;L,H)$ with
$N\cong N'$ and $X\cong X'$.
The corresponding indecomposable $\Lambda$-modules $M,M'$
have $eM\cong X$ and $e'M\cong N$, and similarly for $M'$.
By \eqref{eq:corner-multiplicities}, they have the same composition
factors. Since $\Lambda$ is a composition factor algebra,
$M\cong M'$, and the equivalence gives $Z\cong Z'$.
Thus $\f_A(\mu;L,H)$ satisfies property~$(P)$.
Theorem~\ref{prop:factorization} now shows that $\Lambda$
is representation-finite, completing the induction.
\end{proof}

\subsection{Examples of composition factor algebras}
We collect some classes of composition factor algebras.
By Remark~\ref{rem:invariance}, their quotient algebras are
still composition factor algebras.

\begin{example}\label{ex:composition-factor-algebras}
\begin{enumerate}
\item A local Artin algebra is a composition factor algebra if and
only if it is Nakayama. One direction follows from Proposition~\ref{prop:local-finite}.
For the converse, every
indecomposable module over a Nakayama algebra is uniserial and
is determined by its top and length
\cite[VI, Theorem~2.1(a)]{ARS}. Since the algebra is local, each indecomposable module is
determined by its length. Thus the algebra is a composition factor algebra.

\item Let $Q$ be a quiver of Dynkin type and $k$ a field.
The indecomposable $kQ$-modules are determined by their dimension
vectors. Thus $kQ$ is a composition factor algebra; see \cite[VIII, Corollary~2.4]{ARS}.

\item A \emph{cycle} of length $t\ge1$ in $\modu\Lambda$
is a sequence
$
M_0\longrightarrow M_1\longrightarrow\cdots
\longrightarrow M_t=M_0
$
of nonzero nonisomorphisms between indecomposable modules.
An Artin algebra $\Lambda$ is \emph{representation-directed}
if $\modu\Lambda$ has no cycles. Such algebras are
composition factor algebras \cite[IX, Corollary~2.2]{ARS}. In particular, representation-finite piecewise hereditary algebras
over algebraically closed fields are representation-directed
\cite[Theorem~1.1(iii)]{HappelPH}. For more examples,
see \cite[Chapter~6]{Ringel84}.

\item More generally, if $\modu\Lambda$ has no short cycles, then
$\Lambda$ is a composition factor algebra
\cite[IX, Corollary~2.3]{ARS}. A short cycle is a cycle of length at most two.

\item Let $k$ be a field and $Q_n$ the oriented cycle with $n\ge3$
vertices. We write $J$ for the arrow ideal of $kQ_n$. For $2\le m<n$, the
Nakayama algebra $kQ_n/J^m$ is a composition factor algebra.
Indeed, every indecomposable module has length at most $m<n$.
Its composition factors correspond to a proper segment of the
cycle, so they determine its top and length and hence its
isomorphism class \cite[VI, Theorem~2.1(a)]{ARS}.
\end{enumerate}
\end{example}

\medskip
\noindent\textbf{Acknowledgements.}
{\itshape\emergencystretch=2em
	The work is supported by National Natural Science Foundation
	of China Grant No. 12371034.
	\par}

\medskip
\noindent\textbf{Declaration of AI use.}
The main ideas and overall approach of this paper are due to the authors.
ChatGPT (OpenAI) was used to assist with the proof of
Proposition~\ref{prop:map-positivity}\textup{(2)}.
The tool was also used to search for relevant literature and
check arguments and references.
The authors take full responsibility for the content of this paper.


\textbf{Bo Le}\\
Department of Mathematical Sciences, Tsinghua University, Beijing, 100084, People's Republic of
China.\\
E-mail: \textsf{bole\_math@163.com}\\[0.3cm]
\textbf{Bin Zhu}\\
Department of Mathematical Sciences, Tsinghua University, Beijing, 100084, People's Republic of
China.\\
E-mail: \textsf{zhu-b@mail.tsinghua.edu.cn}

\begin{thebibliography}{99}
\bibitem{AM}
M.~F.~Atiyah and I.~G.~Macdonald,
\emph{Introduction to Commutative Algebra},
Addison-Wesley, Reading, MA, 1969.

\bibitem{AR}
M.~Auslander and I.~Reiten,
\emph{\href{https://doi.org/10.1215/ijm/1256045730}{Modules determined by their composition factors}},
Illinois J. Math. \textbf{29} (1985), no.~2, 280--301.

\bibitem{ARS}
M.~Auslander, I.~Reiten and S.~O.~Smal\o,
\emph{Representation Theory of Artin Algebras},
Cambridge Studies in Advanced Mathematics, vol.~36,
Cambridge University Press, Cambridge, 1995.

\bibitem{Blasco}
V.~Blasco,
\emph{\href{https://doi.org/10.1080/00927872.2025.2585403}{On Artin algebras whose indecomposable modules are determined
by composition factors}},
Comm. Algebra \textbf{54} (2026), no.~7, 2796--2810.

\bibitem{CCWX}
L.~Chen, Z.~Chen, Y.~Wen and W.~Xia,
\emph{Composition factors and finite representation type},
preprint (2026), arXiv:2610.38813.


\bibitem{DR}
V.~Dlab and C.~M.~Ringel,
\emph{\href{https://www.numdam.org/item/CM_1989__70_2_155_0/}{A construction for quasi-hereditary algebras}},
Compositio Math. \textbf{70} (1989), no.~2, 155--175.

\bibitem{First}
U.~A.~First,
\emph{\href{https://doi.org/10.1080/00927872.2015.1065836}{Categorical realizations of quivers}},
Comm. Algebra \textbf{44} (2016), no.~6, 2567--2582.

\bibitem{Happel}
D.~Happel,
\emph{\href{https://doi.org/10.1090/S0002-9939-1982-0663860-4}{Composition factors for indecomposable modules}},
Proc. Amer. Math. Soc. \textbf{86} (1982), no.~1, 29--31.

\bibitem{HappelPH}
D.~Happel,
\emph{\href{https://doi.org/10.4064/cm-78-2-261-266}{Hochschild cohomology of piecewise hereditary algebras}},
Colloq. Math. \textbf{78} (1998), no.~2, 261--266.

\bibitem{Krause}
H.~Krause,
\emph{\href{https://doi.org/10.1016/j.exmath.2015.10.001}{Krull--Schmidt categories and projective covers}},
Exposition. Math. \textbf{33} (2015), no.~4, 535--549.

\bibitem{Lang}
S.~Lang,
\emph{\href{https://doi.org/10.1007/978-1-4613-0041-0}{Algebra}},
revised third ed., Graduate Texts in Mathematics, vol.~211,
Springer-Verlag, New York, 2002.

\bibitem{MacLane}
S.~Mac Lane,
\emph{Categories for the Working Mathematician}, second ed.,
Graduate Texts in Mathematics, vol.~5,
Springer-Verlag, New York, 1998.

\bibitem{MV}
R.~MacPherson and K.~Vilonen,
\emph{\href{https://doi.org/10.1007/BF01388812}{Elementary construction of perverse sheaves}},
Invent. Math. \textbf{84} (1986), no.~2, 403--435.

\bibitem{Ringel84}
C.~M.~Ringel,
\emph{\href{https://doi.org/10.1007/BFb0072870}{Tame Algebras and Integral Quadratic Forms}},
Lecture Notes in Mathematics, vol.~1099,
Springer-Verlag, Berlin, 1984.

\bibitem{Rotman}
J.~J.~Rotman,
\emph{\href{https://doi.org/10.1007/978-1-4612-4176-8}{An Introduction to the Theory of Groups}}, fourth ed.,
Graduate Texts in Mathematics, vol.~148,
Springer-Verlag, New York, 1995.

\bibitem{Simson}
D.~Simson,
\emph{\href{https://www.math.okayama-u.ac.jp/mjou/mjou1-46/mjou_pdf/mjou_35/mjou_35_089.pdf}{On bimodule matrix problems and artinian bipartite piecewise
peak PI-rings of finite prinjective module type}},
Math. J. Okayama Univ. \textbf{35} (1993), 89--138.


\end{thebibliography}
\end{document}